\documentclass[11pt]{article}

\usepackage[a4paper,margin=1in]{geometry}
\usepackage{graphicx}
\usepackage{amsthm}
\usepackage{amsmath}
\usepackage{amssymb}
\usepackage{algorithmic}
\usepackage[ruled,vlined]{algorithm2e}
\usepackage{subfig}
\usepackage{array}
\usepackage{multirow}
\usepackage{booktabs}
\usepackage{makecell}
\usepackage{xcolor}
\usepackage{comment}
\usepackage{placeins}
\usepackage{float}
\definecolor{myblue}{RGB}{0,80,160}

\theoremstyle{plain}
\newtheorem{theorem}{Theorem}
\newtheorem{proposition}{Proposition}
\newtheorem{lemma}{Lemma}
\newtheorem{corollary}{Corollary}
\theoremstyle{definition}
\newtheorem{definition}{Definition}
\newtheorem{assumption}{Assumption}
\theoremstyle{remark}
\newtheorem{remark}{Remark}

\begin{document}
	
\title{Restarted Reflected Halpern Acceleration\\
for Augmented Primal--Dual Methods}
\author{Benqi Liu\({}^{1}\) \quad Ju Cao\({}^{2}\) \quad
Wotao Yin\({}^{3}\) \quad
Zaiwen Wen\({}^{1}\)\thanks{Corresponding author:
\texttt{wenzw@pku.edu.cn}.}\\[0.5em]
\small \({}^{1}\)Beijing International Center for Mathematical Research,
Peking University, Beijing, China\\
\small \({}^{2}\)School of Mathematical Sciences, Peking University,
Beijing, China\\
\small \({}^{3}\)Alibaba US, DAMO Academy\\[0.3em]
\small \texttt{bqliu@pku.edu.cn}, \texttt{2300010612@stu.pku.edu.cn}\\
\small \texttt{wotao.yin@alibaba-inc.com}, \texttt{wenzw@pku.edu.cn}}
\date{}

\maketitle
	
\begin{abstract}
We study linearly constrained composite convex optimization with a smooth term
and a proximable nonsmooth term. We develop a unified augmented primal--dual
framework with primal--dual hybrid gradient-type and augmented
Chambolle--Pock-type metric choices, including a fully augmented
Chambolle--Pock-type family that retains the augmented quadratic term. The
exact scheme admits a degenerate proximal-point form; the linearized scheme
admits a preconditioned forward--backward form. These representations allow
reflected Halpern acceleration to be analyzed directly in primal--dual
variables. For the shadow iterates, we prove convergence to
Karush--Kuhn--Tucker (KKT) points and nonergodic \(O(1/k)\) bounds for the KKT
residual and objective gap, with a scalar worst-case example. We show that
finite identification belongs to the shadow sequence rather than to the
anchored Halpern state. After identification, an affine-face model yields an
exact reduced residual identity and a local-sharpness criterion. Finally, we
prove linear convergence of restart anchors under fixed-point sharpness on the
visited restart set, with local or tail convergence when sharpness follows from
local error bounds. Experiments on linear and convex quadratic programs
illustrate augmentation and linearization.
\end{abstract}

\noindent\textbf{Keywords.} Augmented primal--dual methods; Halpern iteration;
restart; active-set identification; sharpness.

\medskip
\noindent\textbf{MSC 2020.} 90C30; 65K05; 49M37.
	
\section{Introduction}
\label{intro}
We study the linearly constrained composite convex optimization problem
\begin{equation}\label{P}
    \min_{x \in \mathbb{R}^n} \ \Phi(x) := f(x) + g(x)
    \quad \text{s.t.} \quad Ax = b,
\end{equation}
where \(A\in\mathbb{R}^{m\times n}\), \(b\in\mathbb{R}^m\),
\(f:\mathbb{R}^n\to\mathbb{R}\) is convex and continuously differentiable with
Lipschitz continuous gradient, and
\(g:\mathbb{R}^n\to(-\infty,+\infty]\) is proper, closed, convex, and
proximable. We assume throughout that the feasible set is nonempty and that
\eqref{P} has at least one optimal solution. All convergence statements to
Karush--Kuhn--Tucker (KKT) points are made under the additional assumption that
the KKT set is nonempty, or equivalently that the monotone inclusion associated
with the KKT mapping has at least one zero. This condition is automatic under standard constraint
qualifications; for example, it holds when a suitable relative-interior
condition for the affine constraint and the domain of \(g\) is satisfied. We
keep this requirement explicit because primal solvability alone need not
guarantee the existence of a Lagrange multiplier for the nonsmooth composite
model. This formulation includes, for
example, equality-form linear programs, regularized least-squares models with
linear side constraints, and box-constrained convex quadratic programs written
with explicit equality constraints.

The paper is concerned with first-order primal--dual methods built from the
augmented Lagrangian
\[
    L_{\sigma}(x,y)
    = f(x)+g(x)-\langle y,Ax-b\rangle
      +\frac{\sigma}{2}\|Ax-b\|^2,\qquad \sigma\ge0.
\]
Augmentation is attractive both analytically and computationally: it injects
curvature in the constraint directions and can enlarge the admissible
primal--dual parameter region. At the same time, it complicates the usual
complexity analysis. In particular, for equality-constrained augmented
Lagrangian formulations the dual domain is unbounded; consequently, saddle-point gap
estimates are not a satisfactory substitute for convergence guarantees stated
directly in KKT residuals and objective-value gaps. The central question of
this paper is therefore how to obtain global nonergodic KKT rates and restart
guarantees conditional on fixed-point sharpness on the visited restart set for a
broad augmented primal--dual family.

Two lines of work motivate our approach. The first is the augmented-Lagrangian
and proximal-point tradition. Rockafellar interpreted augmented Lagrangian
methods as proximal point algorithms
~\cite{rockafellar1976augmented,rockafellar1976ppa}; Eckstein and Bertsekas
then connected ADMM with Douglas--Rachford splitting~\cite{eckstein1992dr}.
See also the survey~\cite{boyd2011admm}. Semi-proximal and
generalized ADMM-type methods have subsequently become standard tools for
structured convex optimization and conic programming
~\cite{shapiro2004some,fazel2013hankel,xiao2018generalized}. The second line is
matrix-free primal--dual splitting. It includes the Chambolle--Pock/primal--dual
hybrid gradient (PDHG) method and related schemes for composite saddle-point
models~\cite{chambolle2011first,PDHG,esser2010framework,pock2011diagonal}. Further
developments treat more general primal--dual structures
~\cite{condat2013primaldual,combettes2014forwardbackward}. These splitting
methods have inexpensive iterations and are well suited for large-scale implementations, but their
classical convergence theory is usually formulated for saddle-point gaps or
averaged quantities.

Recent developments have begun to bridge these viewpoints. Zhu et
al.~\cite{zhu2023unified} proposed a unified augmented primal--dual framework
that includes updates of the PDHG, Chambolle--Pock, and OGDA types and
established ergodic \(O(1/N)\) guarantees without assuming bounded optimal
multipliers.
Degenerate preconditioned proximal point algorithms were studied systematically
in~\cite{DPPPA}. Building on this operator-theoretic perspective, Sun et
al.~\cite{sun2025accelerating} reformulated a class of preconditioned ADMM
schemes~\cite{xiao2018generalized} as degenerate proximal point mappings and
applied Halpern-type acceleration~\cite{halpern1967fixed,lieder2021convergence}
to obtain nonergodic \(O(1/k)\) KKT residual bounds. The related solver
HPR-LP~\cite{hpr} shows that this viewpoint can also be computationally useful.
These results show the power of the operator viewpoint, but they do not yet
provide a single treatment of exact and linearized augmented primal--dual
schemes, nor do they explain how reflected Halpern acceleration should be
restarted in the local regime.

The local high-accuracy regime is central for first-order methods. Modern
large-scale solvers for LP and QP, including PDLP and its
extensions~\cite{applegate2021practical,applegate2024infeasibility},
cuPDLP.jl and cuPDLPx~\cite{cupdlpjl2025,cupdlpx}, HPR-LP~\cite{hpr},
PDQP~\cite{pdqp}, HPR-QP~\cite{hprqp}, and PDHCG-II~\cite{pdhcgii}, rely on
restart, scaling, and local residual reduction to reach useful accuracy.
Theoretical progress in this direction is particularly mature for LP, where
sharpness-based restart analyses and refined PDHG geometry were established in
~\cite{applegate2023faster,lu2024rhpdhg,lu2025geometry}. Local identification
and error-bound theory provide a natural language for extending such ideas.
Classical results of Hoffman and Robinson
~\cite{hoffman1952approximate,robinson1981continuity}, together with the
active-manifold and partial-smoothness literature
~\cite{lewis2002active,hare2004identifying}, describe how residuals behave
after the active structure has stabilized. Recent work of D{\'\i}az et
al.~\cite{diaz2026active} further clarifies active-set identification and rapid
local convergence for degenerate primal--dual trajectories. We connect these
local geometric ideas with augmented primal--dual reflected
Halpern schemes for the composite model~\eqref{P}. The analysis proceeds
through degenerate preconditioned proximal-point (dPPM) and preconditioned
forward--backward splitting (PFBS) representations, reflected Halpern KKT
rates, shadow identification, and restart under fixed-point sharpness on the
visited restart set.

The main contributions are as follows.
\begin{itemize}
\item We introduce a unified augmented primal--dual metric template for
\eqref{P}. The template contains PDHG-type and CP-type metric choices; their
usual explicit variants arise after the linearization in Section~\ref{sec3},
when applicable. It also includes a fully augmented CP-type subfamily, denoted
FA-CP, whose primal subproblem retains the augmented quadratic term.
\item We prove exact operator representations for the resulting maps. The exact
augmented scheme is a degenerate preconditioned proximal-point map, while the
linearized scheme is a preconditioned forward--backward map with an explicit
reflected-relaxation range. These representations fix the metric,
residual map, and shadow point used throughout the analysis.
\item We convert reflected Halpern fixed-point estimates into optimization
guarantees for \eqref{P}. For the shadow iterates we prove convergence to KKT
points and nonergodic \(O(1/k)\) bounds for both the KKT residual and the
objective gap, and we give a scalar example showing that the global residual
rate is worst-case tight.
\item We identify the sequence to which local geometry applies in reflected
trajectories. Finite identification is proved for the shadow sequence, not for
the anchored Halpern state. Under an additional affine-face hypothesis, the
identified dynamics admit an exact reduced residual identity and a reduced
sharpness criterion.
\item We prove linear convergence of restart anchors for the reflected
augmented maps under fixed-point sharpness on the visited restart set. Thus the
conclusion is global when this fixed-point sharpness holds globally, and local
or tail-only when sharpness follows from local metric subregularity or
Hoffman--Robinson error bounds.
\end{itemize}

Throughout the theoretical sections, the exact augmented maps are treated as
exact subproblem/resolvent maps. The linearized variants are the cases in which
the primal step becomes explicit or reduces to a simple proximal step for
common metric choices. In the numerical QP experiments, the subproblem-based
variants are implemented with the same inexact inner solver as the baseline and
are therefore treated as practical inexact realizations of the exact maps.

Several ingredients used in the analysis are standard, including abstract
Halpern residual estimates, averagedness of forward--backward maps, partial
smoothness, and sharpness-based restart. The contribution is the way these
tools are connected to the augmented primal--dual maps for \eqref{P}. Relative
to the unified augmented primal--dual framework of Zhu
et~al.~\cite{zhu2023unified}, we give exact dPPM and linearized PFBS
representations for the metric template studied here, including the
positive-\(\sigma\) FA-CP subfamily, and use these representations to define the
residual map and shadow point used in the analysis. Relative to the accelerated
dPPM analysis of Sun et~al.~\cite{sun2025accelerating}, we translate reflected
Halpern estimates into nonergodic KKT residual and objective-gap bounds for the
shadow iterates of \eqref{P}, and identify the shadow sequence as the
finite-identification object for reflected trajectories. Relative to LP
sharpness-restart analyses~\cite{applegate2023faster,lu2024rhpdhg,lu2025geometry},
we prove a conditional restart theory for reflected augmented primal--dual maps
and show how local metric subregularity or Hoffman--Robinson error bounds
transfer to fixed-point sharpness on the visited restart set.

The assumptions are layered according to the role of each result. Algebraic
identities, global convergence, local identification, and restart require
different metric or locality conditions, which are stated in the corresponding
results. In particular, restart is global only under global fixed-point
sharpness on the visited restart set; when sharpness is supplied by local
metric subregularity or Hoffman--Robinson error bounds, the conclusion is local
or tail linear convergence of restart anchors. A compact dependency map for the
main results is given in Appendix~\ref{app:assumption-map}.

The rest of the paper is organized as follows. Section~\ref{sec2} studies the
exact augmented primal--dual scheme from the proximal-point viewpoint.
Section~\ref{sec3} develops the forward--backward view of the linearized
scheme. Section~\ref{sec4} studies shadow identification and reduced residuals.
Section~\ref{sec5} establishes restart guarantees under fixed-point sharpness on
the visited restart set.
Section~\ref{sec6} presents the numerical experiments, and
Section~\ref{sec7} concludes the paper.

\section{A Proximal-Point View of Augmented Primal--Dual Methods}
\label{sec2}

We first consider the exact augmented Lagrangian primal--dual scheme. At this
stage the two subproblems are understood through their
first-order optimality conditions; the regularity assumptions that make the
induced resolvent single-valued and globally Lipschitz continuous are imposed
in Section~\ref{sec2:2.2}. Motivated by the reflected primal--dual hybrid
gradient update, in which the multiplier is evaluated at the extrapolated
primal point \(2x^{k+1}-x^k\), we study the following augmented primal--dual
scheme:
\begin{equation}\label{unified PD form}
    \begin{aligned}
        x^{k+1} &= \arg \min_{x} \left\{L_{\sigma}(x,y^{k}) + \frac{1}{2}\|x-x^{k}\|_{P}^{2}\right\}, \\
        y^{k+1} &= \arg \max_{y} \left\{L_{\sigma}(2x^{k+1}-x^{k},y) - \frac{1}{2}\|y-y^{k}\|_{Q}^{2}\right\}.
    \end{aligned}
\end{equation}
Here the primal step is performed on the augmented Lagrangian, while the dual
step retains the reflected primal argument. Different choices of the metric
operators $P$ and $Q$ then lead to different augmented primal--dual
realizations, including PDHG-type and augmented Chambolle--Pock-type schemes.
Although these methods have different explicit forms, they are generated by a
single degenerate preconditioned proximal-point map. This representation will
be used to apply reflected Halpern acceleration and convert abstract fixed-point
residual estimates into nonergodic $O(1/k)$ bounds for the KKT residual and
objective-value gap of problem~\eqref{P}.

We begin by recalling the degenerate preconditioned proximal-point
framework and then identify \eqref{unified PD form} as an instance of it.
For readability, Table~\ref{tab:notation} collects the notation used throughout
the paper.
\begin{table}[!t]
\centering
\caption{Main notation used throughout the paper.}
\label{tab:notation}
\footnotesize
\begin{tabular}{@{}>{\raggedright\arraybackslash}p{0.24\textwidth}
                  >{\raggedright\arraybackslash}p{0.68\textwidth}@{}}
\toprule
Symbol & Meaning \\
\midrule
\(w=(x,y)\) & primal--dual variable; \(w^k\) denotes the anchored Halpern state. \\
\(\Phi=f+g\) & objective function in problem~\eqref{P}. \\
\(\mathcal R\) & original KKT mapping, defined in \eqref{eq:kkt_mapping_residual}. \\
\(\mathcal T\) & augmented KKT operator in \eqref{T_AL}, equivalently \(\mathcal R+\mathcal C_\sigma\). \\
\(\mathcal C_\sigma\) & augmented correction
\(\mathcal C_\sigma(x,y)=(\sigma A^\top(Ax-b),0)\), introduced formally in Section~\ref{sec4}. \\
\(\mathcal E_\sigma\) & linear residual map \(\mathcal E_\sigma(u,v)=(u-\sigma A^\top v,v)\). \\
\(\mathcal M\) & primal--dual metric/preconditioner in \eqref{M_AL}. \\
\(\widehat{\mathcal T}\) & exact dPPM fixed-point map \((\mathcal M+\mathcal T)^{-1}\mathcal M\). \\
\(\widehat{\mathcal T}_{\rm lin}\) & linearized PFBS fixed-point map. \\
\(F\) & generic fixed-point map, either \(\widehat{\mathcal T}\) or \(\widehat{\mathcal T}_{\rm lin}\). \\
\(\hat w^k\) & shadow point, i.e., the fixed-point image of \(w^k\). \\
\(\bar w^k\) & reflected point \((1+\gamma)\hat w^k-\gamma w^k\). \\
\(\mathcal S_\gamma\) & reflected fixed-point map \((1+\gamma)F-\gamma I\). \\
\(r_{\mathrm{KKT}}\) & original KKT residual \(\mathrm{dist}(0,\mathcal R(\cdot))\). \\
\(r_F\) & fixed-point residual \(\|w-F(w)\|_{\mathcal M}\). \\
\(W^\star\) & KKT solution set, \(W^\star=\mathcal T^{-1}(0)=\mathcal R^{-1}(0)\). \\
\(\mu_x\) & Schur-complement metric constant, e.g.,
\(\lambda_{\min}(P-A^\top Q^{-1}A)\) in the positive definite case. \\
\bottomrule
\end{tabular}
\end{table}

\subsection{A Preconditioned Proximal Point Reformulation}
\label{sec2:2.1}

Let $\mathcal{H}$ be a real Hilbert space, and consider the monotone inclusion
problem
\begin{equation}\label{0inT}
    \text{find } w\in\mathcal{H}
    \quad \text{such that} \quad
    0\in \mathcal{T}w.
\end{equation}
Given a bounded, self-adjoint, and positive semidefinite linear operator
$\mathcal{M}:\mathcal{H}\to\mathcal{H}$, the associated preconditioned proximal
point iteration for solving \eqref{0inT} is
\begin{equation}\label{PPP}
    w^{k+1} = (\mathcal{M}+\mathcal{T})^{-1}\mathcal{M}w^k.
\end{equation}
When $\mathcal{M}$ is only positive semidefinite, \eqref{PPP} is often called a
degenerate preconditioned proximal point method (dPPM). In this subsection, we
use only the structural assumptions needed for the algebraic reformulation:
$P$ and $Q$ are assumed to be self-adjoint and positive semidefinite. Stronger
conditions ensuring admissibility and regularity of the associated resolvent
will be introduced in the next subsection.

We show that the augmented Lagrangian primal--dual scheme
\eqref{unified PD form} admits an exact proximal-point-type reformulation.
To this end, let $w=(x,y)$ and define
\begin{equation}\label{M_AL}
    \mathcal{M}
    =
    \begin{pmatrix}
        P & A^\top \\
        A & Q
    \end{pmatrix},
\end{equation}
and
\begin{equation}\label{T_AL}
    \mathcal{T}(x,y)
    =
    \begin{pmatrix}
        \partial \Phi(x) - A^\top y + \sigma A^\top(Ax-b) \\
        Ax-b
    \end{pmatrix}.
\end{equation}

\begin{proposition}\label{prop:dppm_representation}
With $\mathcal{M}$ and $\mathcal{T}$ defined in
\eqref{M_AL}--\eqref{T_AL}, the optimality conditions of
\eqref{unified PD form} are equivalent to
\[
    \mathcal M w^k \in \mathcal M w^{k+1}+\mathcal T(w^{k+1}),
    \qquad w^{k+1}=(x^{k+1},y^{k+1}).
\]
Consequently, whenever the corresponding resolvent is well defined and
single-valued at $\mathcal M w^k$, the update is exactly given by \eqref{PPP}.
If, in addition, $\mathcal M\succeq0$, this is a degenerate preconditioned
proximal point representation.
\end{proposition}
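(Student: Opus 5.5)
The plan is to write the first-order optimality conditions of the two subproblems in \eqref{unified PD form}, rearrange them, and read off the block rows of $\mathcal M w^k\in\mathcal M w^{k+1}+\mathcal T(w^{k+1})$. Since $f$ is differentiable and $g$ is proper, closed and convex, the $x$-subproblem objective is $L_\sigma(\cdot,y^k)+\frac12\|\cdot-x^k\|_P^2$. This function is convex, and the only nonsmooth part is $g$. The sum rule for subdifferentials therefore holds without any qualification condition, and $x^{k+1}$ is a minimizer iff
\[
0\in \partial\Phi(x^{k+1})-A^\top y^k+\sigma A^\top(Ax^{k+1}-b)+P(x^{k+1}-x^k).
\]
Likewise, the $y$-subproblem objective $L_\sigma(2x^{k+1}-x^k,\cdot)-\frac12\|\cdot-y^k\|_Q^2$ is concave and smooth in $y$, so $y^{k+1}$ is a maximizer iff
\[
0=-(A(2x^{k+1}-x^k)-b)-Q(y^{k+1}-y^k).
\]

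Next, rearrange each condition so that the terms at step $k$ sit on the left. For the primal row, add and subtract $A^\top y^{k+1}$ to get
\[
Px^k+A^\top y^k\in Px^{k+1}+A^\top y^{k+1}+\bigl[\partial\Phi(x^{k+1})-A^\top y^{k+1}+\sigma A^\top(Ax^{k+1}-b)\bigr].
\]
This is exactly the first block row of $\mathcal M w^k\in\mathcal M w^{k+1}+\mathcal T(w^{k+1})$ with $\mathcal M$ from \eqref{M_AL} and $\mathcal T$ from \eqref{T_AL}. For the dual row, split $A(2x^{k+1}-x^k)=Ax^{k+1}+A(x^{k+1}-x^k)$ to get
\[
Ax^k+Qy^k=Ax^{k+1}+Qy^{k+1}+(Ax^{k+1}-b),
\]
which is the second block row. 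Every step is an equivalence, so the optimality system is equivalent to the inclusion.

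The main obstacle is the sign and coupling convention. The off-diagonal blocks must come out as $+A^\top$ and $+A$, which makes $\mathcal M$ self-adjoint. The reflection $2x^{k+1}-x^k$ in the dual step is exactly what supplies the $A(x^{k+1}-x^k)$ coupling term, and it has to be matched against the choice to evaluate $\mathcal T$'s $-A^\top y$ at $y^{k+1}$ rather than $y^k$. I would verify the signs carefully row by row.

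For the remaining claims: if $(\mathcal M+\mathcal T)^{-1}$ is single-valued at $\mathcal M w^k$, the inclusion says $w^{k+1}\in(\mathcal M+\mathcal T)^{-1}\mathcal M w^k$, so $w^{k+1}=(\mathcal M+\mathcal T)^{-1}\mathcal M w^k$, which is \eqref{PPP}. Since $\mathcal M$ is symmetric by construction, $\mathcal M\succeq0$ is precisely the hypothesis of the (degenerate) preconditioned proximal point framework. It need not be positive definite, because of the indefinite coupling allowed by $P,Q\succeq0$.
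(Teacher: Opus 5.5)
Your proposal is correct and is essentially the paper's argument. Both reduce $\mathcal M w^k\in\mathcal M w^{k+1}+\mathcal T(w^{k+1})$ to the block rows $Px^k+A^\top y^k\in Px^{k+1}+\partial\Phi(x^{k+1})+\sigma A^\top(Ax^{k+1}-b)$ and $Ax^k+Qy^k=2Ax^{k+1}+Qy^{k+1}-b$, and identify these with the first-order conditions of the two subproblems; you just run the rearrangement in the opposite direction and add a convexity check that these conditions characterize the minimizer and maximizer.

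One minor caveat: calling the $y$-objective concave and smooth tacitly requires $2x^{k+1}-x^k\in\operatorname{dom}g$, since otherwise $L_\sigma(2x^{k+1}-x^k,\cdot)\equiv+\infty$. This is why the paper explicitly reads the subproblems through their first-order conditions, i.e., after discarding the $y$-independent terms.
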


\begin{proof}
The displayed inclusion is, by \eqref{M_AL}--\eqref{T_AL}, equivalent to
\[
\begin{cases}
Px^k + A^\top y^k
    \in
    Px^{k+1} + \partial\Phi(x^{k+1}) + \sigma A^\top(Ax^{k+1}-b), \\[0.4em]
Ax^k + Qy^k
    =
    2Ax^{k+1} + Qy^{k+1} - b.
\end{cases}
\]
Equivalently,
\[
\begin{cases}
0 \in \partial\Phi(x^{k+1}) - A^\top y^k
    + \sigma A^\top(Ax^{k+1}-b)
    + P(x^{k+1}-x^k), \\[0.4em]
0 = -A(2x^{k+1}-x^k) + b - Q(y^{k+1}-y^k).
\end{cases}
\]
The first relation is precisely the optimality condition of the $x$-subproblem
in \eqref{unified PD form}, while the second relation is exactly the optimality
condition of the $y$-subproblem. Therefore \eqref{unified PD form} and the
corresponding instance of \eqref{PPP} are equivalent.
\end{proof}

Proposition~\ref{prop:dppm_representation} identifies \eqref{unified PD form}
with the resolvent form underlying dPPM. Different metric choices therefore
produce exact specializations of the same operator-theoretic template. We use
the following naming convention. With \(\sigma=0\) and diagonal Euclidean
metrics, \eqref{unified PD form} reduces to the primal--dual hybrid gradient
method (PDHG) for the equality-constrained saddle formulation. With
\(\sigma\ge0\) and the shifted primal metric
\(P=\tau^{-1}I_n-\sigma A^\top A\), the augmented quadratic term is linearized
in the primal subproblem; this gives the augmented Chambolle--Pock scheme
(CP-AL) studied in~\cite{zhu2023unified}. Finally, if the primal metric remains
\(\tau^{-1}I_n\), then the augmented quadratic term is retained explicitly in
the primal minimization. We call this subproblem-based metric choice the fully
augmented Chambolle--Pock scheme (FA-CP). Table~\ref{tab:exact_metric_choices}
summarizes the corresponding metric choices.
\begin{table}[htbp]
\centering
\small
\caption{Metric choices for the exact augmented primal--dual scheme.}
\label{tab:exact_metric_choices}
\begin{tabular*}{\textwidth}{@{\extracolsep{\fill}}llll@{}}
\toprule
Scheme & $\sigma$ & $P$ & $Q$ \\
\midrule
\makecell[l]{Primal--dual hybrid\\gradient (PDHG)~\cite{PDHG}}
    & $\sigma=0$
    & $\tau^{-1}I_n$
    & $\rho^{-1}I_m$ \\
\makecell[l]{Augmented\\Chambolle--Pock (CP-AL)~\cite{zhu2023unified}}
    & $\sigma\ge0$
    & $\tau^{-1}I_n-\sigma A^\top A$
    & $\rho^{-1}I_m$ \\
\makecell[l]{Fully augmented\\Chambolle--Pock (FA-CP)}
    & $\sigma\ge0$
    & $\tau^{-1}I_n$
    & $\rho^{-1}I_m$ \\
\bottomrule
\end{tabular*}
\end{table}
The \(\sigma=0\) member of FA-CP coincides with the PDHG-type corner, while its
positive-\(\sigma\) members are fully augmented. Thus FA-CP differs from
CP-AL in the primal metric: CP-AL cancels the added quadratic through a shifted
metric, whereas FA-CP keeps the augmented quadratic in the primal
minimization. To the best of our knowledge, the positive-\(\sigma\) members of
this fully augmented CP-type family have not been analyzed for the composite
equality-constrained model~\eqref{P} through the exact dPPM/PFBS and
reflected-Halpern residual framework developed here. In all
three cases the dual update has the common extrapolated form
$y^{k+1}=y^k-\rho(A(2x^{k+1}-x^k)-b)$.

\subsection{Admissibility and Resolvent Regularity}
\label{sec2:2.2}

Section~\ref{sec2:2.1} identifies \eqref{unified PD form} with a
proximal-point-type resolvent representation. We impose verifiable
conditions under which the associated
resolvent is well defined and globally Lipschitz continuous. These conditions
also guarantee that the metric operator $\mathcal M$ is an admissible
preconditioner for $\mathcal T$ in the sense recalled below.

We first recall the notion of an admissible preconditioner.
\begin{definition}[Admissible preconditioner] \label{admissible}
Let $\mathcal{T}:\mathcal{H}\rightrightarrows\mathcal{H}$ be a set-valued operator.
A linear, bounded, self-adjoint, and positive semidefinite operator
$\mathcal{M}:\mathcal{H}\to\mathcal{H}$ is called an \emph{admissible
preconditioner} for $\mathcal{T}$ if the mapping
$\widehat{\mathcal{T}} := (\mathcal{M}+\mathcal{T})^{-1}\mathcal{M}$ is
single-valued and has full domain.
\end{definition}

We specialize to the operator pair associated with the augmented
Lagrangian primal--dual scheme.

\begin{assumption}\label{ass:regularity}
Let $(\mathcal{M},\mathcal{T})$ be the operator pair defined in
\eqref{M_AL}--\eqref{T_AL}. Assume that
\begin{enumerate}
    \item[(i)] $Q$ is self-adjoint and positive definite;
    \item[(ii)] $\mathcal M\succeq 0$;
    \item[(iii)] the operator
    $G:=\partial\Phi+P+\sigma A^\top A$ has a single-valued inverse
    $G^{-1}:\mathbb R^n\to\mathbb R^n$ that is globally Lipschitz continuous.
\end{enumerate}
\end{assumption}

Assumption~\ref{ass:regularity} is readily verified from the primal metric.
Condition~(iii) is the additional primal regularity used below to solve the
resolvent. A convenient sufficient condition is
\[
        P+\sigma A^\top A\succeq \mu I
        \qquad\text{for some }\mu>0.
\]
Indeed, then $G=\partial\psi$, where
\[
        \psi(x):=\Phi(x)+\frac12\langle x,(P+\sigma A^\top A)x\rangle .
\]
The function $\psi$ is $\mu$-strongly convex. Therefore, for every
$\xi\in\mathbb R^n$, the function
$x\mapsto \psi(x)-\langle \xi,x\rangle$ has a unique minimizer; equivalently,
$\xi\in Gx$ for a unique $x$ and $G^{-1}$ has full domain. If
$p_i\in Gx_i$ for $i=1,2$, strong monotonicity gives
\[
        \mu\|x_1-x_2\|^2
        \le \langle p_1-p_2,x_1-x_2\rangle
        \le \|p_1-p_2\|\,\|x_1-x_2\|,
\]
and therefore $\|G^{-1}p_1-G^{-1}p_2\|\le \mu^{-1}\|p_1-p_2\|$.
Thus Assumption~\ref{ass:regularity} holds whenever the primal metric plus the
augmented quadratic term is uniformly positive definite; for the metric choices
in Table~\ref{tab:exact_metric_choices}, this reduces to the familiar
step-size restrictions discussed below.

\begin{remark}[Role of the primal regularity condition]
Assumption~\ref{ass:regularity}(iii) is used only to ensure
single-valuedness and Lipschitz regularity of the resolvent. For the PDHG,
CP-AL, and FA-CP metric choices in Table~\ref{tab:exact_metric_choices}, it is
automatically satisfied whenever the sufficient condition
\(P+\sigma A^\top A\succeq \mu I\) displayed above holds. The dPPM algebraic
identity in Proposition~\ref{prop:dppm_representation} itself does not require
this regularity; the condition is imposed only when convergence and residual
estimates are invoked.
\end{remark}

The next result records the corresponding resolvent regularity and
admissibility.

\begin{proposition}\label{prop:lipschitz_resolvent}
Under Assumption~\ref{ass:regularity}, the resolvent
$(\mathcal{M}+\mathcal{T})^{-1}$ is single-valued, everywhere defined, and
globally Lipschitz continuous on $\mathbb{R}^n\times\mathbb{R}^m$.
Moreover, the operator
$\widehat{\mathcal T}:=(\mathcal M+\mathcal T)^{-1}\mathcal M$ is
single-valued and has full domain, and $\mathcal M$ is an admissible
preconditioner for $\mathcal T$.
\end{proposition}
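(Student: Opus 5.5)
The plan is to solve the resolvent equation explicitly by block elimination, using the dual block to eliminate $y$. Fix an arbitrary right-hand side $(\xi,\eta)\in\mathbb R^n\times\mathbb R^m$ and consider the inclusion $(\xi,\eta)\in(\mathcal M+\mathcal T)(x,y)$. By \eqref{M_AL}--\eqref{T_AL} it reads
\[
\xi\in Px+\partial\Phi(x)+\sigma A^\top(Ax-b),\qquad \eta=2Ax+Qy-b .
\]
The $A^\top y$ terms cancel in the first block, as in the proof of Proposition~\ref{prop:dppm_representation}. So the primal block decouples from $y$:
\[
\xi+\sigma A^\top b\in Gx,\qquad G=\partial\Phi+P+\sigma A^\top A .
\]
By Assumption~\ref{ass:regularity}(iii), this has the unique solution $x=G^{-1}(\xi+\sigma A^\top b)$, and the map $\xi\mapsto x$ is globally Lipschitz with some constant $L_G$. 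The second block is then solved using Assumption~\ref{ass:regularity}(i):
\[
y=Q^{-1}(\eta+b-2Ax).
\]
This is unique and affine in $(\eta,x)$.

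This gives existence and uniqueness for every $(\xi,\eta)$. Hence $(\mathcal M+\mathcal T)^{-1}$ is single-valued with full domain. For the Lipschitz estimate, take two right-hand sides and write
\[
\|x_1-x_2\|\le L_G\|\xi_1-\xi_2\|,\qquad
\|y_1-y_2\|\le\|Q^{-1}\|\bigl(\|\eta_1-\eta_2\|+2\|A\|L_G\|\xi_1-\xi_2\|\bigr).
\]
Combining the two bounds gives a global Lipschitz constant depending only on $L_G$, $\|A\|$ and $\|Q^{-1}\|$.

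Finally, $\widehat{\mathcal T}=(\mathcal M+\mathcal T)^{-1}\circ\mathcal M$ is the composition of a bounded linear map with a single-valued, everywhere-defined map. It is therefore single-valued with full domain, and also Lipschitz. Together with $\mathcal M$ being bounded, self-adjoint and $\succeq0$ by Assumption~\ref{ass:regularity}(ii), this is exactly Definition~\ref{admissible}.

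There is no real obstacle here. The one point needing care is the cancellation of $-A^\top y$ against the off-diagonal $A^\top y$ of $\mathcal M$, which is what makes the system triangular. Assumption~(ii) is not used for solvability, only for admissibility.
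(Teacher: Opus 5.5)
Your proposal is correct and follows the paper's proof essentially step for step. It uses the same cancellation of the $A^\top y$ terms, which makes the system block-triangular. It solves $x=G^{-1}(\xi+\sigma A^\top b)$ and then recovers $y$ from the dual block through $Q^{-1}$. It obtains the same Lipschitz bounds in terms of $L_G$, $\|A\|$ and $\|Q^{-1}\|$. Finally, it uses the same composition with the bounded linear $\mathcal M$ to conclude admissibility via Definition~\ref{admissible}.
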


\begin{proof}
Let $v=(\xi,\zeta)\in\mathbb{R}^n\times\mathbb{R}^m$, and suppose that
$w=(x,y)$ satisfies $v\in (\mathcal{M}+\mathcal{T})(x,y)$.
By the definitions of $\mathcal{M}$ and $\mathcal{T}$, this is equivalent to
\[
\begin{cases}
\xi \in Px + \partial\Phi(x) + \sigma A^\top(Ax-b),\\[0.4em]
\zeta = 2Ax + Qy - b.
\end{cases}
\]
It follows that
\begin{equation}\label{eq:x_resolvent_eq}
x = G^{-1}(\xi+\sigma A^\top b),
\end{equation}
which is uniquely determined by Assumption~\ref{ass:regularity}(iii). Moreover,
\begin{equation}\label{eq:y_resolvent_eq}
y = Q^{-1}(\zeta+b-2Ax),
\end{equation}
which is also uniquely determined because $Q$ is positive definite. Conversely,
for every $v=(\xi,\zeta)$, the pair $(x,y)$ defined by
\eqref{eq:x_resolvent_eq}--\eqref{eq:y_resolvent_eq} satisfies the displayed
system above. Therefore,
$(\mathcal{M}+\mathcal{T})^{-1}$ is single-valued and everywhere defined.

Let $L_G$ be a Lipschitz constant of $G^{-1}$. To prove Lipschitz continuity,
let $v_i=(\xi_i,\zeta_i)$ and
$w_i=(x_i,y_i):=(\mathcal{M}+\mathcal{T})^{-1}v_i$ for $i=1,2$. From
\eqref{eq:x_resolvent_eq} and the Lipschitz continuity of $G^{-1}$,
\[
\|x_1-x_2\|
\le
L_G\|\xi_1-\xi_2\|.
\]
Since $\|\xi_1-\xi_2\|\le \|v_1-v_2\|$, this yields
\[
\|x_1-x_2\|
\le
L_G\|v_1-v_2\|.
\]
Using \eqref{eq:y_resolvent_eq}, we obtain
$y_1-y_2
=
Q^{-1}\bigl((\zeta_1-\zeta_2)-2A(x_1-x_2)\bigr)$,
and thus
\[
\|y_1-y_2\|
\le
\|Q^{-1}\|
\bigl(\|\zeta_1-\zeta_2\|+2\|A\|\|x_1-x_2\|\bigr)
\le
\|Q^{-1}\|
\bigl(1+2\|A\|L_G\bigr)\|v_1-v_2\|.
\]
Therefore,
\[
\|w_1-w_2\|^2
=
\|x_1-x_2\|^2+\|y_1-y_2\|^2
\le
L_{\mathrm{res}}^2\,\|v_1-v_2\|^2,
\]
where
\[
L_{\mathrm{res}}
:=
\left[
L_G^2
+
\|Q^{-1}\|^2
\bigl(1+2\|A\|L_G\bigr)^2
\right]^{1/2}.
\]
This proves that $(\mathcal{M}+\mathcal{T})^{-1}$ is globally Lipschitz continuous.

Since $\mathcal M$ is a bounded linear operator, the composition
$\widehat{\mathcal T}=(\mathcal M+\mathcal T)^{-1}\mathcal M$
is single-valued and defined on the whole space. Since, by assumption,
$\mathcal M$ is self-adjoint and positive semidefinite, the conclusion follows
from Definition~\ref{admissible}.
\end{proof}

In our finite-dimensional setting, the remaining structural assumptions needed
for the abstract reflected Halpern theory are also automatic.

\begin{proposition}\label{prop:structure_sec2}
Under Assumption~\ref{ass:regularity}, the operator $\mathcal T$ is maximal
monotone and $\operatorname{ran}(\mathcal M)$ is closed.
\end{proposition}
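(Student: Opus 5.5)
The plan is to prove the two claims separately. Closedness of $\operatorname{ran}(\mathcal M)$ is immediate: $\mathcal M$ is a linear map on the finite-dimensional space $\mathbb R^n\times\mathbb R^m$. Its range is a linear subspace of a finite-dimensional space and is therefore closed. No assumption is needed beyond linearity.

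For maximal monotonicity, I will split $\mathcal T$ as a sum $\mathcal T=\mathcal A+\mathcal B$. The first piece is
\[
\mathcal A(x,y)=\bigl(\partial\Phi(x)+\sigma A^\top(Ax-b)\bigr)\times\{0\}.
\]
The second piece is $\mathcal B(x,y)=(-A^\top y,\;Ax-b)$.

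Consider first $\mathcal A$. Since $f$ is convex and finite with Lipschitz gradient and $g$ is proper, closed and convex, $\Phi$ is proper, closed and convex, and $\partial\Phi=\nabla f+\partial g$ by the sum rule, because $\operatorname{dom} f=\mathbb R^n$. Adding the convex smooth term $\frac{\sigma}{2}\|Ax-b\|^2$, the map $x\mapsto\partial\Phi(x)+\sigma A^\top(Ax-b)$ is the subdifferential of the proper closed convex function
\[
h(x)=\Phi(x)+\tfrac{\sigma}{2}\|Ax-b\|^2 .
\]
By Rockafellar's theorem it is maximal monotone. Then $\mathcal A=\partial\tilde h$ with $\tilde h(x,y)=h(x)$, which is proper, closed and convex on the product space, so $\mathcal A$ is maximal monotone.

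Consider next $\mathcal B$. It is affine, with linear part $(x,y)\mapsto(-A^\top y,Ax)$, which is skew-symmetric. Hence $\langle \mathcal B w_1-\mathcal B w_2,w_1-w_2\rangle=0$, so $\mathcal B$ is monotone. Being continuous, single-valued and everywhere defined, $\mathcal B$ is maximal monotone.

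Finally, the sum of a maximal monotone operator and a continuous, single-valued, monotone operator with full domain is maximal monotone; this is standard, e.g.\ Bauschke--Combettes, Cor.~25.5. Applying this to $\mathcal A+\mathcal B$ gives that $\mathcal T$ is maximal monotone.

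There is an alternative route that uses Assumption~\ref{ass:regularity} directly, by invoking Minty-type surjectivity through Proposition~\ref{prop:lipschitz_resolvent}. However, $\mathcal M$ is only positive semidefinite, so $\operatorname{ran}(\mathcal M+\mathcal T)$ being everything does not immediately give maximality. I therefore prefer the decomposition argument.

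The only genuinely delicate point is justifying $\partial\Phi=\nabla f+\partial g$, so that $\mathcal A$ is exactly a subdifferential. This holds because $f$ is finite everywhere. Everything else is a routine citation.
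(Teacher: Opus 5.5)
Your proof is correct and is essentially the paper's argument. The paper likewise writes $\mathcal T$ as the lifted subdifferential $\partial\widetilde\Phi$ plus a monotone, everywhere-defined affine operator, invokes the sum theorem for maximal monotone operators, and gets closedness of $\operatorname{ran}(\mathcal M)$ from finite dimensionality. The only difference is that the paper keeps $\sigma A^\top A x$ in the linear part $\mathcal K(x,y)=(\sigma A^\top Ax-A^\top y,\,Ax)$, which is monotone since $\langle \mathcal K w_1-\mathcal K w_2,w_1-w_2\rangle=\sigma\|A(x_1-x_2)\|^2\ge0$, so the paper needs no subdifferential sum rule; also, because $\mathcal T$ is defined with $\partial\Phi$ directly, the identity $\partial\Phi=\nabla f+\partial g$ you flag as the delicate point plays no role here.
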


\begin{proof}
Define $\widetilde\Phi:\mathbb R^n\times\mathbb R^m\to(-\infty,+\infty]$ by
$\widetilde\Phi(x,y):=\Phi(x)$.
Then
\[
\partial\widetilde\Phi(x,y)=
\begin{pmatrix}
\partial\Phi(x)\\
0
\end{pmatrix}.
\]
Thus $\partial\widetilde\Phi$ is maximal monotone. Next define the linear operator
\[
\mathcal K(x,y):=
\begin{pmatrix}
\sigma A^\top A x-A^\top y\\
Ax
\end{pmatrix}.
\]
For any $w_i=(x_i,y_i)$, $i=1,2$, one has
\[
\langle \mathcal K w_1-\mathcal K w_2,\,w_1-w_2\rangle
=
\sigma\|A(x_1-x_2)\|^2
\ge 0.
\]
Thus $\mathcal K$ is monotone. Since $\mathcal K$ is linear, bounded, and
everywhere defined, the sum $\partial\widetilde\Phi+\mathcal K$ is maximal
monotone. Finally,
\[
\mathcal T
=
\partial\widetilde\Phi+\mathcal K
-
\begin{pmatrix}
\sigma A^\top b\\
b
\end{pmatrix}.
\]
Thus $\mathcal T$ is maximal monotone as well. Because $\mathcal M$ is a linear
operator on the finite-dimensional space $\mathbb R^n\times\mathbb R^m$, its
range is a linear subspace and is therefore closed.
\end{proof}

\begin{remark}[Step-size conditions]
For the representative schemes in Section~\ref{sec2:2.1},
Assumption~\ref{ass:regularity} reduces to familiar step-size conditions. For PDHG,
$P=\tau^{-1}I_n$, $Q=\rho^{-1}I_m$, and $\sigma=0$, with
$P+\sigma A^\top A=\tau^{-1}I_n$; the condition $\mathcal M\succeq0$
reduces, by the Schur complement, to $\tau\rho\|A\|^2\le1$, which is
precisely the classical PDHG step-size condition; see \cite{PDHG}. For CP-AL,
$P=\tau^{-1}I_n-\sigma A^\top A$ and $Q=\rho^{-1}I_m$, and therefore
$P+\sigma A^\top A=\tau^{-1}I_n$, while $\mathcal M\succeq0$ reduces to
$\tau(\sigma+\rho)\|A\|^2 \le 1$, which is precisely the CP-AL step-size
condition; see \cite{zhu2023unified}. For FA-CP,
$P=\tau^{-1}I_n$ and $Q=\rho^{-1}I_m$, which gives
$P+\sigma A^\top A\succeq\tau^{-1}I_n$, and $\mathcal M\succeq0$ again yields
$\tau\rho\|A\|^2\le1$.
In all three cases, $P+\sigma A^\top A\succeq \tau^{-1}I_n$; consequently
Assumption~\ref{ass:regularity}(iii) follows from the sufficient condition
above. Thus the abstract assumptions recover the standard step-size restriction
for PDHG, the natural one for CP-AL, and a PDHG-type condition for the fully
augmented members of FA-CP.
\end{remark}

\subsection{Reflected Halpern Acceleration and $O(1/k)$ KKT Rates}
\label{sec2:2.3}

Given an admissible preconditioner $\mathcal M$, set
$\widehat{\mathcal T}:=(\mathcal M+\mathcal T)^{-1}\mathcal M$. We consider
the reflected Halpern iteration
\begin{equation}\label{eq:reflected_halpern}
    w^{k+1}
    =
    \frac{1}{k+2}w^0
    +
    \frac{k+1}{k+2}
    \Bigl((1+\gamma)\widehat{\mathcal T}(w^k)-\gamma w^k\Bigr),
    \qquad \gamma\in(-1,1].
\end{equation}
Equivalently, defining
\[
    \hat{w}^k := \widehat{\mathcal T}(w^k),
    \qquad
    \bar{w}^k := (1+\gamma)\hat{w}^k-\gamma w^k,
\]
we may rewrite \eqref{eq:reflected_halpern} as
\[
    w^{k+1}
    =
    \frac{1}{k+2}w^0+\frac{k+1}{k+2}\bar{w}^k.
\]
Following the reflected Halpern acceleration framework for dPPM mappings in
\cite{sun2025accelerating}, Algorithm~\ref{alg:halpern_dppm} records the
iteration in the shadow--reflection--anchoring form used throughout the paper.
The shadow sequence \(\{\hat w^k\}\), rather than only the anchored sequence
\(\{w^k\}\), is the object on which the KKT residual estimates are stated.

\begin{algorithm}[H]
\caption{Reflected Halpern-Accelerated dPPM}
\label{alg:halpern_dppm}
{\small
\begin{algorithmic}[1]
\STATE \textbf{Input:} $w^0\in\mathcal H$ and $\gamma\in(-1,1]$.
\FOR{$k=0,1,2,\ldots$}
\STATE \textbf{Step 1.} $\hat w^k=\widehat{\mathcal T}(w^k)$.
\STATE \textbf{Step 2.} $\bar w^k=(1+\gamma)\hat w^k-\gamma w^k$.
\STATE \textbf{Step 3.} $w^{k+1}=(w^0+(k+1)\bar w^k)/(k+2)$.
\ENDFOR
\STATE \textbf{Output:} shadow sequence $\{\hat w^k\}$.
\end{algorithmic}
}
\end{algorithm}

We invoke the abstract reflected Halpern theory for dPPM mappings. The
following proposition is a direct specialization of
\cite[Theorem~2.7, Proposition~2.9]{sun2025accelerating} and records the
statements needed below.

\begin{proposition}\label{prop:general_convergence}
Let $\mathcal{T}:\mathcal{H}\rightrightarrows\mathcal{H}$ be a maximal monotone
operator with $\mathcal{T}^{-1}(0)\neq\emptyset$, and let $\mathcal{M}$ be an
admissible preconditioner for $\mathcal{T}$ with closed range. Let
$\widehat{\mathcal T}:=(\mathcal{M}+\mathcal{T})^{-1}\mathcal{M}$,
and let the sequences $\{w^k\}$, $\{\hat w^k\}$, and $\{\bar w^k\}$ be generated
by Algorithm~\ref{alg:halpern_dppm}. Then the following statements hold.
\begin{enumerate}
    \item[(i)] If $\gamma\in(-1,1]$ and $(\mathcal{M}+\mathcal{T})^{-1}$ is
    continuous, then the sequence $\{\hat w^k\}$ converges strongly to a point
    $w^\ast\in\mathcal{T}^{-1}(0)$. Moreover, if $\gamma\in (-1,1)$, then the
    sequences $\{w^k\}$ and $\{\bar w^k\}$ also converge strongly to the same
    limit $w^\ast$.

    \item[(ii)] If $\gamma\in(-1,1]$, the relaxed residual satisfies
\[
\|w^k-\bar w^k\|_{\mathcal M}
\le
\frac{2\|w^0-w^\ast\|_{\mathcal M}}{k+1},
\qquad
\forall\,k\ge 0,\ \forall\,w^\ast\in\mathcal{T}^{-1}(0).
\]
Equivalently,
\[
\|w^k-\hat w^k\|_{\mathcal M}
\le
\frac{2}{(1+\gamma)(k+1)}
\|w^0-w^\ast\|_{\mathcal M},
\qquad
\forall\,k\ge 0,\ \forall\,w^\ast\in\mathcal{T}^{-1}(0).
\]
\end{enumerate}
\end{proposition}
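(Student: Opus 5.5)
The plan is to reduce Proposition~\ref{prop:general_convergence} to the classical Halpern theory for a nonexpansive map, working in the seminorm induced by \(\mathcal M\). The first step is firm nonexpansiveness of \(\widehat{\mathcal T}\) in \(\|\cdot\|_{\mathcal M}\). Admissibility gives single-valuedness and full domain. If \(\hat w_i=\widehat{\mathcal T}(w_i)\), then \(\mathcal M(w_i-\hat w_i)\in\mathcal T(\hat w_i)\), and monotonicity of \(\mathcal T\) yields
\[
\langle \mathcal M(w_1-\hat w_1)-\mathcal M(w_2-\hat w_2),\hat w_1-\hat w_2\rangle\ge0 ,
\]
that is,
\[
\|\hat w_1-\hat w_2\|_{\mathcal M}^2\le\langle \hat w_1-\hat w_2,w_1-w_2\rangle_{\mathcal M}.
\]
It follows that \(\mathcal S_\gamma=(1+\gamma)\widehat{\mathcal T}-\gamma I\) is \(\mathcal M\)-nonexpansive for \(\gamma\in(-1,1]\), since a firmly nonexpansive map has nonexpansive reflections \(2F-I\) and \(\mathcal S_\gamma\) is a convex combination of \(I\) and \(2\widehat{\mathcal T}-I\) when \(\gamma\le1\). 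Moreover, \(w^\ast\in\mathcal T^{-1}(0)\) if and only if \(\widehat{\mathcal T}w^\ast=w^\ast\), by the definition of the resolvent and single-valuedness.

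For part (ii), rewrite the iteration as \(w^{k+1}=\tfrac1{k+2}w^0+\tfrac{k+1}{k+2}\mathcal S_\gamma w^k\). I would run the standard Lieder/Sabach--Shtern potential argument verbatim in the semi-inner product \(\langle\cdot,\cdot\rangle_{\mathcal M}\). That argument uses only nonexpansiveness of \(\mathcal S_\gamma\), the existence of a fixed point \(w^\ast\), and bilinearity and Cauchy--Schwarz for \(\langle\cdot,\cdot\rangle_{\mathcal M}\), all of which hold for a positive semidefinite \(\mathcal M\). It gives
\[
\|w^k-\mathcal S_\gamma w^k\|_{\mathcal M}\le \frac{2\|w^0-w^\ast\|_{\mathcal M}}{k+1}.
\]
Since \(w^k-\bar w^k=(1+\gamma)(w^k-\hat w^k)\), the equivalent form follows by dividing by \(1+\gamma\).

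Part (i) is the hard step, because \(\|\cdot\|_{\mathcal M}\) is only a seminorm and the Halpern strong-convergence argument controls only the \(\mathcal M\)-component. The plan follows \cite{sun2025accelerating}. Factor \(\mathcal M=\mathcal C\mathcal C^*\), which is possible since \(\operatorname{ran}\mathcal M\) is closed, and pass to the reduced variable \(z=\mathcal C^*w\). There \(\widehat{\mathcal T}\) induces a genuinely firmly nonexpansive map \(\widetilde{\mathcal T}=\mathcal C^*(\mathcal M+\mathcal T)^{-1}\mathcal C\) on \(\operatorname{ran}\mathcal C^*\), which is the resolvent of the parallel composition \(\mathcal C^*\rhd\mathcal T\). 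Note that \(\widehat{\mathcal T}w=(\mathcal M+\mathcal T)^{-1}\mathcal C z\) depends on \(w\) only through \(z\).

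The Halpern iteration in \(z\) is classical reflected Halpern for a nonexpansive map. By Wittmann-type results it converges strongly to a fixed point \(z^\ast\), and for \(\gamma<1\) the map \(\mathcal S_\gamma\) is averaged, so \(z^k-\mathcal S_\gamma z^k\to0\). Continuity of \((\mathcal M+\mathcal T)^{-1}\) then gives
\[
\hat w^k=(\mathcal M+\mathcal T)^{-1}\mathcal C z^k\to w^\ast:=(\mathcal M+\mathcal T)^{-1}\mathcal C z^\ast.
\]
One checks that \(\mathcal C^* w^\ast=z^\ast\), so \(w^\ast\) is a fixed point of \(\widehat{\mathcal T}\), hence a zero of \(\mathcal T\).

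For \(\gamma\in(-1,1)\), it remains to show convergence of \(\{w^k\}\) and \(\{\bar w^k\}\) themselves, which is more than the \(\mathcal M\)-seminorm residual gives. The idea is to use the averagedness of \(\mathcal S_\gamma\) to obtain \(\|w^k-\hat w^k\|\to0\) in a full norm rather than only in \(\|\cdot\|_{\mathcal M}\). Concretely, I would write the recursion for \(w^k-\hat w^k\) and show that its component in \(\ker\mathcal M\) is damped by the factor \(-\gamma\) at each step, with \(|\gamma|<1\), while the anchoring weights \(\tfrac{k+1}{k+2}\) keep the accumulated error summable. Getting this kernel-component control is the delicate point, so I would follow \cite[Theorem~2.7]{sun2025accelerating} directly for this step.
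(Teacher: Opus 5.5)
Your proposal is correct. For (i) it uses the same reduction the paper relies on: factor $\mathcal M=CC^\ast$, run the Halpern iteration on $z=C^\ast w$, and lift back through the continuous resolvent. The paper's proof, however, is purely a citation. It fixes the decomposition of \cite[Proposition~2.3]{DPPPA}, matches notation with \cite{sun2025accelerating} via $\alpha=2$, $\rho=1+\gamma$ (their $\bar w^k$ is your $\hat w^k$, their $\hat w^{k+1}$ is your $\bar w^k$), and reads (i) off their Theorem~2.7(a) and (ii) off their Proposition~2.9.

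Your (ii) takes a more elementary, self-contained route. You get firm $\mathcal M$-nonexpansiveness of $\widehat{\mathcal T}$ from monotonicity, write $\mathcal S_\gamma=\tfrac{1-\gamma}{2}I+\tfrac{1+\gamma}{2}(2\widehat{\mathcal T}-I)$ to get nonexpansiveness, and run Lieder's argument in the semi-inner product. This shows that (ii) needs neither continuity of the resolvent, nor closed range, nor the $CC^\ast$ reduction. The citation route buys brevity and the general relaxed framework of Sun et al., at the price of checking a notation dictionary.

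Two refinements to your (i). First, the averagedness remark for $\gamma<1$ plays no role. The limit $\hat w^k=(\mathcal M+\mathcal T)^{-1}Cz^k\to w^\ast$ uses only $z^k\to z^\ast$ from Wittmann's theorem, so it holds for $\gamma=1$ as well, as the statement requires. Second, the step you defer to \cite[Theorem~2.7]{sun2025accelerating} is elementary once $\hat w^k\to w^\ast$ in the full norm. With $e^k:=w^k-w^\ast$,
\[
e^{k+1}=-\gamma\tfrac{k+1}{k+2}\,e^k+\epsilon^k,\qquad
\epsilon^k:=\tfrac{1}{k+2}(w^0-w^\ast)+\tfrac{(k+1)(1+\gamma)}{k+2}(\hat w^k-w^\ast)\to0,
\]
so $\|e^{k+1}\|\le|\gamma|\,\|e^k\|+\|\epsilon^k\|$. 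For $|\gamma|<1$ this gives first boundedness and then $e^k\to0$, after which $\bar w^k\to w^\ast$ follows. No splitting into $\ker\mathcal M$ components and no summability is needed; the mechanism is geometric damping of a vanishing forcing term.
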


\begin{proof}
Choose a decomposition $\mathcal{M}=CC^\ast$ as in
\cite[Proposition~2.3]{DPPPA}. In the notation of
\cite{sun2025accelerating}, take $\alpha=2$ and $\rho=1+\gamma$. Our
$\hat w^k$ corresponds to their $\bar w^k$, while our
$\bar w^k$ corresponds to their $\hat w^{k+1}$. Statement~(i) follows from
\cite[Theorem~2.7(a)]{sun2025accelerating}. Statement~(ii) follows from
\cite[Proposition~2.9]{sun2025accelerating}, again with $\alpha=2$ and
$\rho=1+\gamma\in(0,2]$.
\end{proof}

\begin{remark}[Euclidean residual in the Halpern case]
The Lipschitz regularity in Proposition~\ref{prop:lipschitz_resolvent} also
recovers the Euclidean residual estimate used in the spADMM analysis of
\cite{sun2025accelerating}. Specifically, assume that
$(\mathcal M+\mathcal T)^{-1}$ is $L$-Lipschitz continuous and let
$\mathcal M=CC^\ast$. In the Halpern case $\gamma=0$, Corollary~2.11 of
\cite{sun2025accelerating} gives, for every
$w^\ast\in\mathcal T^{-1}(0)$ and every $k\ge0$,
\[
\|w^k-\hat w^k\|
\le
\frac{1}{k+1}\|w^0-w^\ast\|
+
\frac{(5k+1)L\|C\|}{(k+1)^2}\|w^0-w^\ast\|_{\mathcal M}.
\]
Thus the Lipschitz resolvent condition is not merely a well-posedness device: in
the nonreflected Halpern case it converts the degenerate metric estimate into an
ambient Euclidean residual bound. The subsequent analysis uses the
$\mathcal M$-residual estimate in Proposition~\ref{prop:general_convergence}(ii),
which is available throughout the reflected range $\gamma\in(-1,1]$.
\end{remark}

\begin{remark}
The endpoint $\gamma=-1$ is degenerate in the exact dPPM setting as well,
since then $\bar w^k=(1+\gamma)\hat w^k-\gamma w^k=w^k$, and the Halpern update
reduces to $w^{k+1}=\frac{1}{k+2}w^0+\frac{k+1}{k+2}w^k=w^0$.
Accordingly, the abstract convergence and residual statements above are stated
only for $\gamma>-1$.
At the other endpoint, $\gamma=1$ is retained only in the shadow-convergence and
residual statements allowed by the cited reflected Halpern theory. The main KKT
convergence theorem below asserts common convergence of the anchored state and
the shadow sequence, and therefore uses the open interval $\gamma\in(-1,1)$.
\end{remark}

We specialize the abstract residual estimate to the augmented KKT operator
and then convert it back to the original KKT residual.
For $w=(x,y)$, define the original KKT mapping and its residual by
\begin{equation}
\label{eq:kkt_mapping_residual}
\mathcal R(x,y):=\binom{\nabla f(x)+\partial g(x)-A^\top y}{Ax-b},
\qquad
r_{\mathrm{KKT}}(x,y):=\mathrm{dist}\bigl(0,\mathcal R(x,y)\bigr).
\end{equation}
Let $\mathcal E_\sigma$ denote the linear map
$\mathcal E_\sigma(u,v):=(u-\sigma A^\top v,v)$. For a set $S$, write
$\mathrm{dist}_{\mathcal M}(w,S)$ for
$\inf_{u\in S}\|w-u\|_{\mathcal M}$.

\begin{theorem}
\label{thm:kkt_gap_rate}
Suppose that Assumption~\ref{ass:regularity} holds, that
$\mathcal{T}^{-1}(0)\neq\emptyset$, and that $\gamma\in (-1,1)$. The
nonemptiness assumption is the explicit form of the standing
KKT-nonemptiness convention for the augmented mapping. Let
$\{w^k\}$, $\{\hat w^k\}$, and $\{\bar w^k\}$ be generated by
Algorithm~\ref{alg:halpern_dppm}. Then $\{w^k\}$ and $\{\hat w^k\}$ converge
strongly to a common limit
$w^\ast=(x^\ast,y^\ast)\in \mathcal T^{-1}(0)=\mathcal R^{-1}(0)$.
Moreover, for every $k\ge0$,
\[
r_{\mathrm{KKT}}(\hat w^k)
\le
\frac{2\|\mathcal E_\sigma\|\sqrt{\|\mathcal M\|}}
{(1+\gamma)(k+1)}
\,\mathrm{dist}_{\mathcal M}\bigl(w^0,\mathcal T^{-1}(0)\bigr).
\]
Consequently, $r_{\mathrm{KKT}}(\hat w^k)=O(1/k)$. Set
\[
C_\ast:=\sup_{k\ge0}\bigl(\|\hat x^k-x^\ast\|+\|\hat y^k\|\bigr)+\|y^\ast\|.
\]
Then $C_\ast<\infty$ and
\[
\bigl|\Phi(\hat x^k)-\Phi(x^\ast)\bigr|
\le
C_\ast\,r_{\mathrm{KKT}}(\hat w^k),\qquad k\ge0.
\]
In particular, $\bigl|\Phi(\hat x^k)-\Phi(x^\ast)\bigr|=O(1/k)$.
\end{theorem}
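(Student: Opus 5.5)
The plan is to pass from the abstract reflected Halpern estimate of Proposition~\ref{prop:general_convergence} to the original KKT residual in three steps: identify the zero sets, bound the residual of the shadow iterate, and then bound the objective gap.

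\textbf{Convergence and the zero sets.} Propositions~\ref{prop:lipschitz_resolvent} and~\ref{prop:structure_sec2} supply every hypothesis of Proposition~\ref{prop:general_convergence}(i): $\mathcal T$ is maximal monotone, $\mathcal M$ is admissible with closed range, and $(\mathcal M+\mathcal T)^{-1}$ is continuous. Hence, for $\gamma\in(-1,1)$, the sequences $\{w^k\}$ and $\{\hat w^k\}$ converge to a common $w^\ast\in\mathcal T^{-1}(0)$. To see that $\mathcal T^{-1}(0)=\mathcal R^{-1}(0)$, observe that $\mathcal T(w)=\mathcal R(w)+(\sigma A^\top(Ax-b),0)$. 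The second block forces $Ax=b$ at any zero, so the correction term vanishes there. The same observation, read pointwise, gives $\mathcal R(w)=\mathcal E_\sigma\,\mathcal T(w)$ as sets, because $\mathcal E_\sigma(u,v)=(u-\sigma A^\top v,v)$ removes exactly $\sigma A^\top$ times the second component.

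\textbf{Residual bound.} From $\mathcal M w^k\in(\mathcal M+\mathcal T)\hat w^k$ we obtain $\mathcal M(w^k-\hat w^k)\in\mathcal T(\hat w^k)$. Applying $\mathcal E_\sigma$ gives
\[
\mathcal E_\sigma\mathcal M(w^k-\hat w^k)\in\mathcal R(\hat w^k),
\qquad\text{so}\qquad
r_{\mathrm{KKT}}(\hat w^k)\le\|\mathcal E_\sigma\|\,\|\mathcal M(w^k-\hat w^k)\|.
\]
Since $\mathcal M\succeq0$, we have $\|\mathcal M u\|\le\sqrt{\|\mathcal M\|}\,\|u\|_{\mathcal M}$, by writing $\mathcal M u=\mathcal M^{1/2}\mathcal M^{1/2}u$. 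Combining this with Proposition~\ref{prop:general_convergence}(ii) and taking the infimum over $w^\ast\in\mathcal T^{-1}(0)$ yields the stated $O(1/k)$ bound with $\mathrm{dist}_{\mathcal M}$.

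\textbf{Objective gap.} Finiteness of $C_\ast$ is immediate, since $\hat w^k$ converges. Choose $(u^k,v^k)\in\mathcal R(\hat w^k)$ attaining the distance; this is possible because the set is closed. Then $u^k+A^\top\hat y^k\in\partial\Phi(\hat x^k)$ and $v^k=A\hat x^k-b$, using $\partial\Phi=\nabla f+\partial g$, which holds because $f$ is smooth.

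For the upper bound, convexity gives
\[
\Phi(x^\ast)\ge\Phi(\hat x^k)+\langle u^k+A^\top\hat y^k,\,x^\ast-\hat x^k\rangle
=\Phi(\hat x^k)+\langle u^k,x^\ast-\hat x^k\rangle-\langle\hat y^k,v^k\rangle,
\]
where the last step uses $Ax^\ast=b$. Hence $\Phi(\hat x^k)-\Phi(x^\ast)\le(\|\hat x^k-x^\ast\|+\|\hat y^k\|)\,r_{\mathrm{KKT}}$.

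For the lower bound, use $A^\top y^\ast\in\partial\Phi(x^\ast)$:
\[
\Phi(\hat x^k)\ge\Phi(x^\ast)+\langle y^\ast,A(\hat x^k-x^\ast)\rangle=\Phi(x^\ast)+\langle y^\ast,v^k\rangle\ge\Phi(x^\ast)-\|y^\ast\|\,r_{\mathrm{KKT}}.
\]
Together the two inequalities give $|\Phi(\hat x^k)-\Phi(x^\ast)|\le C_\ast\,r_{\mathrm{KKT}}(\hat w^k)$.

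The main obstacle is the residual transfer. The augmented term must be cancelled through $\mathcal E_\sigma$, and the degenerate $\mathcal M$-seminorm must be converted into a Euclidean norm via $\|\mathcal M u\|\le\sqrt{\|\mathcal M\|}\,\|u\|_{\mathcal M}$. Once those two points are in place, the rest follows directly from the earlier propositions.
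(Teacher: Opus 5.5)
Your proposal is correct and follows essentially the same route as the paper. You invoke Proposition~\ref{prop:general_convergence} via Propositions~\ref{prop:lipschitz_resolvent} and~\ref{prop:structure_sec2}, transfer the residual through $\mathcal M(w^k-\hat w^k)\in\mathcal T(\hat w^k)$ and $\mathcal R=\mathcal E_\sigma\mathcal T$, and bound the objective gap from above and below by convexity at a least-norm element of $\mathcal R(\hat w^k)$. Your explicit justifications of $\|\mathcal M u\|\le\sqrt{\|\mathcal M\|}\,\|u\|_{\mathcal M}$ and of the sum rule $\partial\Phi=\nabla f+\partial g$ simply fill in details the paper leaves implicit.
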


\begin{proof}
By Propositions~\ref{prop:structure_sec2} and
\ref{prop:lipschitz_resolvent}, all assumptions of
Proposition~\ref{prop:general_convergence} are satisfied. Since $\gamma\in
(-1,1)$, Proposition~\ref{prop:general_convergence}(i) implies that
$w^k\to w^\ast$ and $\hat w^k\to w^\ast$ for some
$w^\ast\in\mathcal T^{-1}(0)$. By the definition of $\mathcal T$,
its second component is $Ax-b$. On the zero set of $\mathcal T$, the
augmentation term therefore vanishes, and
$\mathcal T^{-1}(0)=\mathcal R^{-1}(0)$.

For any $(x,y)$, the set-valued identity
$\mathcal R(x,y)=\mathcal E_\sigma\bigl(\mathcal T(x,y)\bigr)$ follows
directly from the definition of $\mathcal T$: its first component differs from
that of $\mathcal R$ by $\sigma A^\top(Ax-b)$, while the second components are
identical. Therefore
\[
r_{\mathrm{KKT}}(\hat w^k)
\le
\|\mathcal E_\sigma\|\,\mathrm{dist}\bigl(0,\mathcal T(\hat w^k)\bigr).
\]
Moreover, the resolvent relation for $\hat w^k$ gives
$\mathcal M(w^k-\hat w^k)\in\mathcal T(\hat w^k)$, and therefore
\[
\mathrm{dist}\bigl(0,\mathcal T(\hat w^k)\bigr)
\le
\|\mathcal M(w^k-\hat w^k)\|
\le
\sqrt{\|\mathcal M\|}\,\|w^k-\hat w^k\|_{\mathcal M}.
\]
Since $\bar w^k=(1+\gamma)\hat w^k-\gamma w^k$, we have
$w^k-\bar w^k=(1+\gamma)(w^k-\hat w^k)$,
and Proposition~\ref{prop:general_convergence}(ii), after taking the infimum
over $\mathcal T^{-1}(0)$, yields
\[
\|w^k-\hat w^k\|_{\mathcal M}
=
\frac{1}{1+\gamma}\|w^k-\bar w^k\|_{\mathcal M}
\le
\frac{2}{(1+\gamma)(k+1)}
\,\mathrm{dist}_{\mathcal M}\bigl(w^0,\mathcal T^{-1}(0)\bigr).
\]
Combining the last three estimates proves the residual bound.

It remains to prove the objective estimate. The resolvent relation also implies
$\mathcal T(\hat w^k)\neq\emptyset$, and therefore
$\mathcal R(\hat w^k)\neq\emptyset$. Since $\mathcal R(\hat w^k)$ is closed and
convex in finite dimensions, it contains a least-norm element. Let
$r^k=(r_x^k,r_y^k)\in\mathcal R(\hat w^k)$ satisfy
$\|r^k\|=r_{\mathrm{KKT}}(\hat w^k)$. Then $r_y^k=A\hat x^k-b$ and
$r_x^k\in\partial\Phi(\hat x^k)-A^\top\hat y^k$.
Thus there exists $g^k\in\partial\Phi(\hat x^k)$ such that
$g^k=A^\top\hat y^k+r_x^k$.
By convexity of $\Phi$,
\[
\Phi(\hat x^k)-\Phi(x^\ast)
\le
\langle g^k,\hat x^k-x^\ast\rangle
=
\langle r_x^k,\hat x^k-x^\ast\rangle
+
\langle \hat y^k,A\hat x^k-b\rangle.
\]
Consequently,
\[
\Phi(\hat x^k)-\Phi(x^\ast)
\le
\bigl(\|\hat x^k-x^\ast\|+\|\hat y^k\|\bigr)\,
r_{\mathrm{KKT}}(\hat w^k).
\]

Since $w^\ast\in\mathcal R^{-1}(0)$, there exists
$g^\ast\in\partial\Phi(x^\ast)$ such that $g^\ast=A^\top y^\ast$. Again by convexity,
\[
\Phi(\hat x^k)-\Phi(x^\ast)
\ge
\langle g^\ast,\hat x^k-x^\ast\rangle
=
\langle y^\ast,A\hat x^k-b\rangle
\ge
-\|y^\ast\|\,r_{\mathrm{KKT}}(\hat w^k).
\]
Combining the upper and lower bounds gives
\[
\bigl|\Phi(\hat x^k)-\Phi(x^\ast)\bigr|
\le
\Bigl(\|\hat x^k-x^\ast\|+\|\hat y^k\|+\|y^\ast\|\Bigr)
r_{\mathrm{KKT}}(\hat w^k).
\]
Since $\hat w^k\to w^\ast$, the constant $C_\ast$ is finite. The stated
$O(1/k)$ objective-gap bound follows from the residual estimate.
\end{proof}

\begin{remark}[Worst-case tightness]
The preceding $O(1/k)$ residual estimate is sharp in the general convex
setting. To see this, fix $\gamma\in(-1,1)$ and choose
$\eta\in(0,1)$ with $(1+\gamma)\eta\le1$. For the scalar problem
\[
        \min_{x\in\mathbb R}\ \frac{a}{2}x^2,
        \qquad a:=\frac{\eta}{1-\eta},
\]
viewed as the unconstrained instance of \eqref{P} and equipped with
$\mathcal M=1$, the dPPM map is
$F(x)=(I+\mathcal T)^{-1}x=(1-\eta)x$ and
$r_F(x)=|x-F(x)|=\eta|x|$. Set $q:=1-(1+\gamma)\eta$. Then the reflected map is
$\mathcal S_\gamma(x)=qx$, and the scalar Halpern recursion becomes
\[
        x^{k+1}=\frac{1}{k+2}x^0+\frac{k+1}{k+2}qx^k .
\]
Multiplying by $k+2$ and setting $z_k:=(k+1)x^k$ gives
$z_{k+1}=x^0+qz_k$ with $z_0=x^0$. Therefore
$z_k=(1+q+\cdots+q^k)x^0$. Starting from $x^0\ne0$, this yields
\[
        x^k=\frac{1-q^{k+1}}{(1+\gamma)\eta(k+1)}x^0,\qquad
        r_F(x^k)=\frac{1-q^{k+1}}{(1+\gamma)(k+1)}|x^0|.
\]
The shadow point satisfies $\hat x^k=F(x^k)$, and its KKT residual is
$r_{\rm KKT}(\hat x^k)=|a\hat x^k|=r_F(x^k)$. Thus
\[
        \lim_{k\to\infty}(k+1)r_{\rm KKT}(\hat x^k)
        =
        \frac{|x^0|}{1+\gamma}>0.
\]
This shows that the nonergodic residual bound cannot, in general, be improved
to $o(1/k)$.
\end{remark}

\section{A Forward--Backward View of Linearized Reflected Schemes}
\label{sec3}

This section studies linearized augmented primal--dual schemes. The linearized update
admits an exact preconditioned forward--backward splitting (PFBS)
representation, and its reflected relaxation range is controlled by the Schur
complement quantity
\[
        \mu_x:=\lambda_{\min}(P-A^\top Q^{-1}A).
\]
This representation leads to reflected Halpern iterates with nonergodic
\(O(1/k)\) bounds for the KKT residual and the objective-value gap.

\subsection{A Linearized Forward--Backward Reformulation}
\label{sec3:linearized_reformulation}

The computational motivation for the linearized schemes is to move a smooth
component of the primal objective to the forward step, so that the shadow
\(x\)-update becomes explicit or reduces to a simple proximal operation. This
mechanism applies directly to the PDHG metric and to the shifted CP-AL metric.
For instance, when \(h=f\), the PDHG choice yields the usual
forward--backward/proximal-gradient update, while the CP-AL metric cancels the
augmented quadratic in the backward subproblem and leaves a proximal step for
\(g\). The same linearization does not play this role for the fully augmented
FA-CP metric with \(P=\tau^{-1}I\) and \(\sigma>0\): the term
\(\frac{\sigma}{2}\|Ax-b\|^2\) remains in the primal minimization, and the
resulting subproblem is generally still coupled through \(A\). Thus the
linearized theory below is most useful computationally for the PDHG and CP-AL
corners, while FA-CP is treated later as a subproblem-based augmented method.

To cover different linearization choices in a single notation, let
$h:\mathbb R^n\to\mathbb R$ be convex and continuously differentiable, with
$L_h$-Lipschitz continuous gradient, and assume that
\begin{equation}\label{eq:sec31_h_assump}
\phi_h(x):=f(x)-h(x)+\frac{\sigma}{2}\|Ax-b\|^2
\quad\text{is convex on }\mathbb R^n.
\end{equation}
Recall from \eqref{T_AL} that
\[
\mathcal T(x,y)=
\begin{pmatrix}
\partial g(x)+\nabla f(x)-A^\top y+\sigma A^\top(Ax-b)\\
Ax-b
\end{pmatrix}.
\]
We split $\mathcal T$ as
\begin{equation}\label{eq:AB_split_sec3_general}
\begin{aligned}
\mathcal T&=\mathcal A_h+\mathcal B_h,
\qquad
\mathcal B_h(x,y):=\binom{\nabla h(x)}{0},\\
\mathcal A_h(x,y)&:=
\binom{\partial g(x)+\nabla f(x)-\nabla h(x)-A^\top y+\sigma A^\top(Ax-b)}{Ax-b}.
\end{aligned}
\end{equation}
The associated PFBS mapping is
\[
\widehat{\mathcal T}_{\rm lin}(w)
:=
(\mathcal M+\mathcal A_h)^{-1}\bigl(\mathcal Mw-\mathcal B_h(w)\bigr),
\qquad
\mathcal M=
\begin{pmatrix}
P & A^\top\\
A & Q
\end{pmatrix}.
\]

The following basic facts justify the use of this splitting in a
preconditioned forward--backward step.

\begin{proposition}\label{prop:split_h_general}
Suppose that \(h\) is convex and continuously differentiable, that \(\nabla h\)
is \(L_h\)-Lipschitz continuous, and that \eqref{eq:sec31_h_assump} holds.
Then the operator $\mathcal A_h$ is maximally monotone, while $\mathcal B_h$ is
monotone and globally Lipschitz continuous with Lipschitz constant $L_h$.
\end{proposition}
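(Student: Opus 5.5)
The plan mirrors the proof of Proposition~\ref{prop:structure_sec2}: write \(\mathcal A_h\) as the sum of a maximal monotone subdifferential and a bounded, everywhere-defined monotone operator. Treat \(\mathcal B_h\) separately.

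Since \(\phi_h\) is convex and continuously differentiable on \(\mathbb R^n\), the function \(\psi_h:=g+\phi_h\) is proper, closed and convex. Its domain is \(\operatorname{dom} g\), and \(\phi_h\) is finite everywhere. By the subdifferential sum rule for a convex function plus a differentiable convex function,
\[
\partial\psi_h(x)=\partial g(x)+\nabla f(x)-\nabla h(x)+\sigma A^\top(Ax-b).
\]
Define \(\widetilde\psi_h(x,y):=\psi_h(x)\) on \(\mathbb R^n\times\mathbb R^m\). Then \(\partial\widetilde\psi_h(x,y)=(\partial\psi_h(x),0)\), which is maximal monotone because it is the subdifferential of a proper closed convex function.

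Next define the linear operator \(\mathcal J(x,y):=(-A^\top y,\;Ax)\). It is skew, since \(\langle \mathcal J w,w\rangle=-\langle y,Ax\rangle+\langle Ax,y\rangle=0\). Hence \(\mathcal J\) is monotone, bounded and everywhere defined. We have
\[
\mathcal A_h=\partial\widetilde\psi_h+\mathcal J-(0,b).
\]
The sum of a maximal monotone operator and a continuous, everywhere-defined monotone operator is maximal monotone. Translating by a constant vector preserves maximal monotonicity. This gives the claim for \(\mathcal A_h\).

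For \(\mathcal B_h\), monotonicity follows from the monotonicity of \(\nabla h\), which holds because \(h\) is convex:
\[
\langle \mathcal B_h w_1-\mathcal B_h w_2,w_1-w_2\rangle=\langle\nabla h(x_1)-\nabla h(x_2),x_1-x_2\rangle\ge0.
\]
For the Lipschitz bound,
\[
\|\mathcal B_h w_1-\mathcal B_h w_2\|=\|\nabla h(x_1)-\nabla h(x_2)\|\le L_h\|x_1-x_2\|\le L_h\|w_1-w_2\|.
\]

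The only step needing care is the sum-rule identity for \(\partial\psi_h\). The assumption is only that \(\phi_h\) is convex as a whole, not that \(f-h\) is convex. The fix is to apply the sum rule to \(g+\phi_h\), with \(\phi_h\) treated as a single differentiable convex function. This requires no qualification condition, because \(\operatorname{dom}\phi_h=\mathbb R^n\).
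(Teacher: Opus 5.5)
Your proposal is correct and follows essentially the same route as the paper, which also writes \(\mathcal A_h\) as the lifted subdifferential of \(\varphi_h=g+f-h+\frac{\sigma}{2}\|A\cdot-b\|^2\) plus a bounded skew linear block plus a constant shift, and obtains monotonicity and the \(L_h\)-Lipschitz bound of \(\mathcal B_h\) via \(\mathcal B_h=\nabla\widetilde h\) with \(\widetilde h(x,y)=h(x)\). Your explicit justification of the sum rule \(\partial(g+\phi_h)=\partial g+\nabla\phi_h\), which the paper uses only implicitly, is a useful extra detail; note only that your symbol \(\mathcal J\) for the skew operator clashes with the paper's later \(\mathcal J=(\mathcal M+\mathcal A_h)^{-1}\mathcal M\).
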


\begin{proof}
Define
\[
\varphi_h(x):=g(x)+f(x)-h(x)+\frac{\sigma}{2}\|Ax-b\|^2.
\]
By \eqref{eq:sec31_h_assump}, the function $\varphi_h$ is proper, closed, and
convex; therefore $\partial\varphi_h$ is maximally monotone. Since
\[
\mathcal A_h(x,y)=
\begin{pmatrix}
\partial\varphi_h(x)\\ 0
\end{pmatrix}
+
\begin{pmatrix}
0 & -A^\top\\ A & 0
\end{pmatrix}
\binom{x}{y}
+
\binom{0}{-b},
\]
and the block operator is bounded and skew-adjoint, $\mathcal A_h$ is maximally
monotone. Moreover, $\mathcal B_h=\nabla\widetilde h$ with
$\widetilde h(x,y):=h(x)$; therefore $\mathcal B_h$ is monotone. Its Lipschitz bound is
immediate from the $L_h$-Lipschitz continuity of $\nabla h$:
\[
\|\mathcal B_h(w)-\mathcal B_h(w')\|
=
\|\nabla h(x)-\nabla h(x')\|
\le L_h\|x-x'\|
\le L_h\|w-w'\|.
\]
\end{proof}

\begin{remark}[Special choices of the forward part]
The two-parameter partial-linearization family is contained in the present
framework. If
\[
h(x)=\psi_{\theta,\delta}(x):=\theta f(x)+\delta\frac{\sigma}{2}\|Ax-b\|^2,
\qquad \theta,\delta\in[0,1],
\]
then
\[
\nabla h(x)=\theta\nabla f(x)+\delta\sigma A^\top(Ax-b),
\]
and \eqref{eq:AB_split_sec3_general} reduces to the splitting used in the
corresponding partial-linearization scheme. In particular, $h=f$ corresponds
to linearizing only the smooth objective term, while
$h=\theta f+\delta\frac{\sigma}{2}\|Ax-b\|^2$ yields a family that also
partially linearizes the augmented quadratic penalty.

The parameter \(\delta\) has a concrete computational role. In the primal
subproblem, this choice leaves only
\((1-\delta)\frac{\sigma}{2}\|Ax-b\|^2\) in the backward part and treats the
remaining \(\delta\)-fraction by the forward correction
\(\delta\sigma A^\top(Ax^k-b)\). Thus \(\delta=0\) keeps the full augmented
quadratic implicit, whereas \(\delta=1\) moves it completely to the forward
step. Intermediate values provide a tradeoff between retaining useful
augmented curvature and reducing the cost of the primal minimization. This can
be relevant when \(A^\top A\) is dense or poorly structured: keeping the
quadratic implicit may require a coupled linear solve or inner iteration,
whereas the forward treatment only uses products with \(A\) and \(A^\top\).
\end{remark}

\begin{assumption}\label{ass:sec31_metric}
Throughout this section,
\[
Q\succ0,
\qquad
P-A^\top Q^{-1}A\succ0.
\]
Equivalently, $\mathcal M$ is positive definite on
$\mathbb R^n\times\mathbb R^m$.
\end{assumption}

This is a Schur-complement condition. For example, if
$Q=\rho^{-1}I_m$ and $P=\tau^{-1}I_n$, it holds whenever
$\tau\rho\|A\|^2<1$. For the shifted augmented Chambolle--Pock metric
$P=\tau^{-1}I_n-\sigma A^\top A$ and $Q=\rho^{-1}I_m$, it is implied by
$\tau(\rho+\sigma)\|A\|^2<1$.

Together with Proposition~\ref{prop:split_h_general}, this metric assumption
ensures that $\widehat{\mathcal T}_{\rm lin}$ is single-valued and everywhere
defined. The abstract PFBS map then gives the following explicit primal--dual
update.

\begin{proposition}\label{prop:alg_equiv_linearized}
Suppose that \(h\) is convex and continuously differentiable, that \(\nabla h\)
is \(L_h\)-Lipschitz continuous, and that \eqref{eq:sec31_h_assump} and
Assumption~\ref{ass:sec31_metric} hold. Let $w^k=(x^k,y^k)$ and define
$\hat w^k=(\hat x^k,\hat y^k):=\widehat{\mathcal T}_{\rm lin}(w^k)$. Then
$(\hat x^k,\hat y^k)$ is characterized by
\begin{subequations}\label{eq:alg_equiv_linearized}
\begin{align}
\hat x^k
&\in \arg\min_x\Bigl\{
 f(x)+g(x)-h(x)+\langle\nabla h(x^k),x-x^k\rangle-\langle y^k,Ax-b\rangle 
 \notag\\
&\hspace{5.3em} +\frac{\sigma}{2}\|Ax-b\|^2+\frac12\|x-x^k\|_P^2
\Bigr\}, \label{eq:alg_equiv_linearized_x}\\
\hat y^k
&=y^k+Q^{-1}\bigl(b-A(2\hat x^k-x^k)\bigr).\label{eq:alg_equiv_linearized_y}
\end{align}
\end{subequations}
\end{proposition}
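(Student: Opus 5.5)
The plan mirrors the proof of Proposition~\ref{prop:dppm_representation}: unfold the resolvent inclusion defining $\widehat{\mathcal T}_{\rm lin}$ componentwise, and recognise each component as an optimality condition. By definition, $\hat w^k=\widehat{\mathcal T}_{\rm lin}(w^k)$ means
\[
\mathcal M w^k-\mathcal B_h(w^k)\in \mathcal M\hat w^k+\mathcal A_h(\hat w^k).
\]
I would first record that this inclusion has exactly one solution $\hat w^k$. Proposition~\ref{prop:split_h_general} gives that $\mathcal A_h$ is maximally monotone, and Assumption~\ref{ass:sec31_metric} gives $\mathcal M\succ0$. Hence $(\mathcal M+\mathcal A_h)^{-1}$ is single-valued with full domain; this is the resolvent of $\mathcal M^{-1}\mathcal A_h$, which is maximally monotone in the $\mathcal M$-inner product, by Minty. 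So "characterized by" is meaningful.

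Next, write the inclusion blockwise using \eqref{M_AL} and \eqref{eq:AB_split_sec3_general}. The first block reads
\[
Px^k+A^\top y^k-\nabla h(x^k)\in P\hat x^k+A^\top\hat y^k+\partial g(\hat x^k)+\nabla f(\hat x^k)-\nabla h(\hat x^k)-A^\top\hat y^k+\sigma A^\top(A\hat x^k-b).
\]
The two $A^\top\hat y^k$ terms cancel, leaving
\[
0\in\partial\varphi_h(\hat x^k)+\nabla h(x^k)-A^\top y^k+P(\hat x^k-x^k),
\]
where $\varphi_h=g+f-h+\frac{\sigma}{2}\|A\cdot-b\|^2$ as in the proof of Proposition~\ref{prop:split_h_general}. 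The second block reads
\[
Ax^k+Qy^k=A\hat x^k+Q\hat y^k+A\hat x^k-b.
\]
Since $Q\succ0$, this rearranges to \eqref{eq:alg_equiv_linearized_y}.

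The key remaining step is to identify the first-block inclusion with the optimality condition of \eqref{eq:alg_equiv_linearized_x}. The objective there equals $\varphi_h(x)$ plus a smooth convex function of $x$: a linear term plus $\frac12\|x-x^k\|_P^2$, with $P\succeq A^\top Q^{-1}A\succeq0$. By \eqref{eq:sec31_h_assump}, $\varphi_h$ is proper, closed and convex. The subdifferential sum rule is therefore exact, because one summand is finite and differentiable everywhere. Fermat's rule then states that $x$ minimizes the objective if and only if $0\in\partial\varphi_h(x)+\nabla h(x^k)-A^\top y^k+P(x-x^k)$.

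Converting $\partial\varphi_h$ into $\partial g+\nabla f-\nabla h+\sigma A^\top(A\cdot-b)$ is the one delicate point, since $f-h$ alone need not be convex. I would handle it by noting that $f-h+\frac{\sigma}{2}\|A\cdot-b\|^2$ is convex and differentiable everywhere. The convex sum rule then gives $\partial\varphi_h=\partial g+\nabla(f-h+\frac{\sigma}{2}\|A\cdot-b\|^2)$, and this matches the first component of $\mathcal A_h$.

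Combining the two blocks, any $\hat w^k$ satisfying the resolvent inclusion has $\hat x^k$ in the argmin and $\hat y^k$ given by \eqref{eq:alg_equiv_linearized_y}. Conversely, such a pair satisfies both block relations and hence equals $\widehat{\mathcal T}_{\rm lin}(w^k)$, by the uniqueness established at the start. In fact the argmin is a singleton, because the resolvent is single-valued. The main obstacle is only this careful subdifferential bookkeeping; the rest is algebra.
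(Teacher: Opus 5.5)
Your proposal is correct and follows essentially the same route as the paper: unfold the PFBS inclusion blockwise, cancel the $A^\top\hat y^k$ terms, solve the dual block using $Q\succ0$, and identify the primal block with the first-order optimality condition of the subproblem objective. The only differences are minor. You spell out the sum rule $\partial\varphi_h=\partial g+\nabla\phi_h$ and obtain uniqueness from single-valuedness of the resolvent, whereas the paper obtains uniqueness from strong convexity of the subproblem, since $P\succ0$.
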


\begin{proof}
By definition,
\[
\hat w^k=(\mathcal M+\mathcal A_h)^{-1}\bigl(\mathcal Mw^k-\mathcal B_h(w^k)\bigr)
\iff
\mathcal Mw^k-\mathcal B_h(w^k)\in\mathcal M\hat w^k+\mathcal A_h(\hat w^k).
\]
Comparing the two components gives
\[
\begin{aligned}
Px^k+A^\top y^k-\nabla h(x^k)
&\in P\hat x^k+\partial g(\hat x^k)+\nabla f(\hat x^k)-\nabla h(\hat x^k)
\\
&\hspace{5.3em}+\sigma A^\top(A\hat x^k-b),\\
Ax^k+Qy^k&=2A\hat x^k+Q\hat y^k-b.
\end{aligned}
\]
That is,
\[
0\in \partial g(\hat x^k)+\nabla f(\hat x^k)-\nabla h(\hat x^k)+\nabla h(x^k)
-A^\top y^k+\sigma A^\top(A\hat x^k-b)+P(\hat x^k-x^k),
\]
and
\[
Q(\hat y^k-y^k)=b-A(2\hat x^k-x^k).
\]
Now define
\[
\begin{aligned}
\Xi_k(x):={}&f(x)+g(x)-h(x)+\langle\nabla h(x^k),x-x^k\rangle
-\langle y^k,Ax-b\rangle\\
&\qquad +\frac{\sigma}{2}\|Ax-b\|^2+\frac12\|x-x^k\|_P^2.
\end{aligned}
\]
Because $f-h+\frac{\sigma}{2}\|Ax-b\|^2$ is convex by
\eqref{eq:sec31_h_assump}, $g$ is proper, closed, and convex, and
Assumption~\ref{ass:sec31_metric} implies $P\succ0$, the function $\Xi_k$ is
proper, closed, and strongly convex. Its first-order optimality condition is
the displayed primal inclusion above, and is therefore equivalent to
\eqref{eq:alg_equiv_linearized_x}; the displayed dual equation is precisely
\eqref{eq:alg_equiv_linearized_y}.
\end{proof}

Thus the linearization induced by $h$ replaces $h(x)$ in the primal subproblem
by its affine model $h(x^k)+\langle\nabla h(x^k),x-x^k\rangle$; after dropping
the irrelevant constant $h(x^k)$, this contributes the term
$-h(x)+\langle\nabla h(x^k),x-x^k\rangle$ in
\eqref{eq:alg_equiv_linearized_x}.

Under Assumption~\ref{ass:sec31_metric}, the operator $\mathcal M$ is
invertible. We therefore set
\[
\mathcal C:=\mathcal M^{-1}\mathcal B_h,
\qquad
\mathcal G:=I-\mathcal C,
\qquad
\mathcal J:=(\mathcal M+\mathcal A_h)^{-1}\mathcal M,
\]
which gives
\[
\widehat{\mathcal T}_{\rm lin}=\mathcal J\circ\mathcal G.
\]

\subsection{Averagedness and the Relaxation Range}
\label{sec3:averagedness_relaxation}

We use the $\mathcal M$-metric notation
$\langle u,v\rangle_{\mathcal M}:=\langle u,\mathcal Mv\rangle$ and
$\|u\|_{\mathcal M}:=\sqrt{\langle u,u\rangle_{\mathcal M}}$; unqualified
norms are Euclidean. Recall that a mapping $R$ is nonexpansive in this metric if
\(\|R u-R v\|_{\mathcal M}\le \|u-v\|_{\mathcal M}\) for all $u,v$, and is
\(\alpha\)-averaged, with \(\alpha\in(0,1)\), if
\[
        R=(1-\alpha)I+\alpha N
\]
for some nonexpansive mapping $N$ in the same metric. Firm nonexpansiveness
means \(1/2\)-averagedness. We also use the standard equivalent
characterization
\[
\|R u-R v\|_{\mathcal M}^2
\le
\|u-v\|_{\mathcal M}^2
-
\frac{1-\alpha}{\alpha}
\|(I-R)u-(I-R)v\|_{\mathcal M}^2 .
\]

The forward operator $\mathcal B_h$ only acts on the
primal component; the key metric quantity is the amount by which
$\mathcal M$ controls that component. Define
\[
\mu_x:=\lambda_{\min}\!\bigl(P-A^\top Q^{-1}A\bigr)>0.
\]
Writing $S_x:=P-A^\top Q^{-1}A$, a completion of the square gives, for every
primal--dual direction $(u,v)$,
\[
\|(u,v)\|_{\mathcal M}^2
=\|u\|_{S_x}^2+\bigl\|Q^{1/2}(v+Q^{-1}Au)\bigr\|^2 .
\]
Consequently,
\begin{equation}\label{eq:sec32_xdom}
\|(u,v)\|_{\mathcal M}^2\ge \mu_x\|u\|^2 .
\end{equation}
Thus closeness in the $\mathcal M$-metric implies Euclidean closeness of the
primal components. This is the bridge that allows the $L_h$-Lipschitz continuity
of $\nabla h$ to be expressed as an averagedness estimate for the forward step
in the $\mathcal M$-metric.

\begin{proposition}\label{prop:sec32_averaged}
Suppose that \(h\) is convex and continuously differentiable, that \(\nabla h\)
is \(L_h\)-Lipschitz continuous, and that \eqref{eq:sec31_h_assump} and
Assumption~\ref{ass:sec31_metric} hold, with $\mu_x>L_h/2$.
If $L_h>0$, then $\mathcal G=I-\mathcal C$ is $\alpha_{\rm fw}$-averaged in the
$\mathcal M$-metric with $\alpha_{\rm fw}:=L_h/(2\mu_x)\in(0,1)$. If
$L_h=0$, then $\mathcal G$ is an isometry in the $\mathcal M$-metric. In all
cases, $\mathcal J=(\mathcal M+\mathcal A_h)^{-1}\mathcal M$ is firmly
nonexpansive in the $\mathcal M$-metric, and
$\widehat{\mathcal T}_{\rm lin}=\mathcal J\circ\mathcal G$ is
$\alpha_{\rm lin}$-averaged, where
\[
\alpha_{\rm lin}:=\frac{1}{2-\frac{L_h}{2\mu_x}}.
\]
\end{proposition}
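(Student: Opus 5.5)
The proof has three parts: the forward step, the backward (resolvent) step, and the composition of the two averaged maps.

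\emph{Forward step.} I would use the Baillon--Haddad theorem: since $h$ is convex with $L_h$-Lipschitz gradient, $\nabla h$ is $1/L_h$-cocoercive,
\[
\langle \nabla h(x)-\nabla h(x'),x-x'\rangle\ge L_h^{-1}\|\nabla h(x)-\nabla h(x')\|^2 .
\]
For $w=(x,y)$ and $w'=(x',y')$, set $d:=\mathcal B_h(w)-\mathcal B_h(w')=(\delta,0)$ with $\delta=\nabla h(x)-\nabla h(x')$. The metric inner product reduces to a Euclidean one:
\[
\langle \mathcal C w-\mathcal C w',w-w'\rangle_{\mathcal M}=\langle d,w-w'\rangle=\langle\delta,x-x'\rangle .
\]
Next I would bound $\|\mathcal M^{-1}d\|_{\mathcal M}^2=\langle d,\mathcal M^{-1}d\rangle$. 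Because $d$ has zero dual component, block inversion gives $\langle d,\mathcal M^{-1}d\rangle=\langle \delta,S_x^{-1}\delta\rangle\le\mu_x^{-1}\|\delta\|^2$, where $S_x=P-A^\top Q^{-1}A$. Hence
\[
\langle \mathcal C w-\mathcal C w',w-w'\rangle_{\mathcal M}\ge \frac{\mu_x}{L_h}\|\mathcal C w-\mathcal C w'\|_{\mathcal M}^2 ,
\]
so $\mathcal C$ is $(\mu_x/L_h)$-cocoercive in the $\mathcal M$-metric. The standard lemma then applies: if $\mathcal C$ is $\beta$-cocoercive, $I-\mathcal C$ is $1/(2\beta)$-averaged, which here gives $\alpha_{\rm fw}=L_h/(2\mu_x)$. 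I would include the short expansion of $\|(I-\mathcal C)w-(I-\mathcal C)w'\|_{\mathcal M}^2$ rather than cite it, because that expansion yields exactly the averagedness characterization recalled before the statement. If $L_h=0$, then $\mathcal C$ is the zero map, and $\mathcal G=I$ is trivially an isometry.

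\emph{Backward step.} By Proposition~\ref{prop:split_h_general}, $\mathcal A_h$ is maximally monotone. Since $\mathcal M\succ0$, Minty's theorem applied to $\mathcal M^{-1}\mathcal A_h$ (which is maximally monotone in $\langle\cdot,\cdot\rangle_{\mathcal M}$) gives that $\mathcal J$ is single-valued with full domain. For firm nonexpansiveness, take $p=\mathcal J u$ and $p'=\mathcal J u'$. Then $\mathcal M(u-p)\in\mathcal A_h p$ and $\mathcal M(u'-p')\in\mathcal A_h p'$, and monotonicity yields
\[
\langle (u-p)-(u'-p'),p-p'\rangle_{\mathcal M}\ge0 .
\]
This is the firm-nonexpansiveness inequality in the $\mathcal M$-metric.

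\emph{Composition.} I would combine the two maps using the composition formula for averaged operators (Ogura--Yamada / Combettes--Yamada): if $R_1$ is $\alpha_1$-averaged and $R_2$ is $\alpha_2$-averaged, then $R_1R_2$ is $\alpha$-averaged with
\[
\alpha=\frac{\alpha_1+\alpha_2-2\alpha_1\alpha_2}{1-\alpha_1\alpha_2}.
\]
With $\alpha_1=1/2$ this simplifies to $\alpha=1/(2-\alpha_2)$. Substituting $\alpha_2=L_h/(2\mu_x)$ gives $\alpha_{\rm lin}$. In the case $L_h=0$ the same value $1/2$ follows directly from $\mathcal G=I$.

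The main obstacle is the forward step, namely the metric-transfer estimate $\langle d,\mathcal M^{-1}d\rangle\le\mu_x^{-1}\|\delta\|^2$. I would verify it by computing the $(1,1)$ block of $\mathcal M^{-1}$, which is $S_x^{-1}$. Alternatively, by the duality $\sup_u\{2\langle d,u\rangle-\|u\|_{\mathcal M}^2\}$ together with \eqref{eq:sec32_xdom}, one gets $2\langle\delta,u_x\rangle-\mu_x\|u_x\|^2\le\|\delta\|^2/\mu_x$. The composition step can be cited, but I would reproduce its short proof in the $\mathcal M$-inner product to be safe.
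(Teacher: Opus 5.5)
Your proposal is correct and follows essentially the paper's proof. Baillon--Haddad combined with the bound $\|(\delta,0)\|_{\mathcal M^{-1}}^2\le\mu_x^{-1}\|\delta\|^2$ gives $(\mu_x/L_h)$-cocoercivity of $\mathcal C$; the paper derives that bound from \eqref{eq:sec32_xdom} via the dual-norm supremum, your stated alternative, rather than from the $(1,1)$ block $S_x^{-1}$ of $\mathcal M^{-1}$. The paper's final computation is exactly the $\alpha_1=1/2$ case of the Combettes--Yamada composition formula you cite.

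One small correction concerns the case $L_h=0$: $\nabla h$ is then constant but not necessarily zero, so $\mathcal C$ is a constant map and $\mathcal G$ is a translation rather than the identity. Since only differences $\mathcal Gw-\mathcal Gw'=w-w'$ enter, the isometry and firm-nonexpansiveness conclusions are unaffected.
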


\begin{proof}
By Proposition~\ref{prop:split_h_general}, $\mathcal A_h$ is maximally
monotone. Since $\mathcal M\succ0$, the resolvent
$\mathcal J=(\mathcal M+\mathcal A_h)^{-1}\mathcal M$ is firmly nonexpansive
in the $\mathcal M$-metric.

If $L_h=0$, then $\nabla h$ is constant, and therefore
$\mathcal G=I-\mathcal M^{-1}\mathcal B_h$ is a translation. Therefore
$\|\mathcal Gw-\mathcal Gw'\|_{\mathcal M}=\|w-w'\|_{\mathcal M}$ for all
$w,w'$. Applying the firm nonexpansiveness of $\mathcal J$ to the translated
inputs $\mathcal Gw$ and $\mathcal Gw'$ gives
\[
\|\widehat{\mathcal T}_{\rm lin}w-\widehat{\mathcal T}_{\rm lin}w'\|_{\mathcal M}^2
\le
\langle
\widehat{\mathcal T}_{\rm lin}w-\widehat{\mathcal T}_{\rm lin}w',
w-w'\rangle_{\mathcal M}
\]
because $\mathcal Gw-\mathcal Gw'=w-w'$. Thus
$\widehat{\mathcal T}_{\rm lin}$ is firmly nonexpansive, i.e.,
$1/2$-averaged, which is the value of $\alpha_{\rm lin}$ when $L_h=0$.

It remains to consider $L_h>0$.
Let $w=(x,y)$, $w'=(x',y')$, and set
\[
d:=w-w',\qquad \xi:=\nabla h(x)-\nabla h(x'),\qquad c:=\mathcal Cw-\mathcal Cw'.
\]
Since $\mathcal B_h(w)-\mathcal B_h(w')=(\xi,0)$ and $\mathcal M\mathcal C=\mathcal B_h$,
\[
\langle c,d\rangle_{\mathcal M}
=\langle \mathcal B_h(w)-\mathcal B_h(w'),w-w'\rangle
=\langle \xi,x-x'\rangle.
\]
Because $h$ is convex and $\nabla h$ is $L_h$-Lipschitz continuous, the
Baillon--Haddad theorem~\cite{baillon1977quelques} yields
\[
\langle \xi,x-x'\rangle\ge \frac{1}{L_h}\|\xi\|^2.
\]
On the other hand, \eqref{eq:sec32_xdom} implies
$\|z\|_{\mathcal M}^2\ge \mu_x\|u\|^2$ for every $z=(u,v)$. Thus
\[
\|(\xi,0)\|_{\mathcal M^{-1}}^2
=\sup_{z\neq0}\frac{\langle(\xi,0),z\rangle^2}{\|z\|_{\mathcal M}^2}
\le \frac{1}{\mu_x}\|\xi\|^2.
\]
Here $\|\cdot\|_{\mathcal M^{-1}}$ denotes the norm induced by
$\mathcal M^{-1}$.
Since $\mathcal M c=(\xi,0)$,
\[
\|c\|_{\mathcal M}^2=\|\mathcal Mc\|_{\mathcal M^{-1}}^2
\le \frac{1}{\mu_x}\|\xi\|^2.
\]
Combining the two estimates, we obtain
\[
\langle c,d\rangle_{\mathcal M}
\ge \frac{1}{L_h}\|\xi\|^2
\ge \frac{\mu_x}{L_h}\|c\|_{\mathcal M}^2.
\]
Thus $\mathcal C$ is $\beta$-cocoercive in the $\mathcal M$-metric with
$\beta:=\mu_x/L_h$.

Set $q:=\mathcal Gw-\mathcal Gw'=d-c$. Then
\[
\|q\|_{\mathcal M}^2
=\|d-c\|_{\mathcal M}^2
\le \|d\|_{\mathcal M}^2-(2\beta-1)\|c\|_{\mathcal M}^2.
\]
Since $\mu_x>L_h/2$, one has $\beta>1/2$, which gives
$\alpha_{\rm fw}=1/(2\beta)=L_h/(2\mu_x)\in(0,1)$ and
\[
\|\mathcal Gw-\mathcal Gw'\|_{\mathcal M}^2
\le \|w-w'\|_{\mathcal M}^2
-\frac{1-\alpha_{\rm fw}}{\alpha_{\rm fw}}
\|(I-\mathcal G)w-(I-\mathcal G)w'\|_{\mathcal M}^2 .
\]
Thus $\mathcal G$ is $\alpha_{\rm fw}$-averaged.

Next let
\[
t:=\widehat{\mathcal T}_{\rm lin}w-\widehat{\mathcal T}_{\rm lin}w',
\qquad q:=\mathcal Gw-\mathcal Gw'.
\]
Then $t=\mathcal J(\mathcal Gw)-\mathcal J(\mathcal Gw')$, and firm
nonexpansiveness gives
\[
\|t\|_{\mathcal M}^2+\|q-t\|_{\mathcal M}^2\le \|q\|_{\mathcal M}^2.
\]
Together with the averagedness inequality for $\mathcal G$ this yields
\[
\|t\|_{\mathcal M}^2+\|q-t\|_{\mathcal M}^2
+\frac{1-\alpha_{\rm fw}}{\alpha_{\rm fw}}\|d-q\|_{\mathcal M}^2
\le \|d\|_{\mathcal M}^2.
\]
Now set $r:=d-q$ and $s:=q-t$, for which $d-t=r+s$. Writing
$\eta:=(1-\alpha_{\rm fw})/\alpha_{\rm fw}$, we compute
\[
(1+\eta)\bigl(\|s\|_{\mathcal M}^2+\eta\|r\|_{\mathcal M}^2\bigr)
-\eta\|r+s\|_{\mathcal M}^2
=\|s-\eta r\|_{\mathcal M}^2\ge0.
\]
Therefore
\[
\|s\|_{\mathcal M}^2+\eta\|r\|_{\mathcal M}^2
\ge \frac{\eta}{1+\eta}\|r+s\|_{\mathcal M}^2
=(1-\alpha_{\rm fw})\|d-t\|_{\mathcal M}^2.
\]
Substituting this bound into the previous inequality gives
\[
\|t\|_{\mathcal M}^2+(1-\alpha_{\rm fw})\|d-t\|_{\mathcal M}^2
\le \|d\|_{\mathcal M}^2.
\]
Therefore $\widehat{\mathcal T}_{\rm lin}$ is $\alpha_{\rm lin}$-averaged with
\[
\frac{1-\alpha_{\rm lin}}{\alpha_{\rm lin}}=1-\alpha_{\rm fw},
\qquad\text{i.e.,}\qquad
\alpha_{\rm lin}=\frac{1}{2-\alpha_{\rm fw}}=\frac{1}{2-\frac{L_h}{2\mu_x}}.
\]
\end{proof}

The averagedness constant immediately determines a sufficient admissible range
for the reflected relaxation parameter. We do not use, or claim, a converse
necessity statement for this range.

\begin{theorem}\label{thm:sec32_relaxation}
Under the assumptions of Proposition~\ref{prop:sec32_averaged}, define
$\mathcal S_\gamma:=(1+\gamma)\widehat{\mathcal T}_{\rm lin}-\gamma I$.
Then $\mathcal S_\gamma$ is nonexpansive in the $\mathcal M$-metric whenever
\[
-1< \gamma\le 1-\frac{L_h}{2\mu_x}.
\]
\end{theorem}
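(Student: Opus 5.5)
The plan is to use the averagedness of $\widehat{\mathcal T}_{\rm lin}$ from Proposition~\ref{prop:sec32_averaged} and the standard fact that an $\alpha$-averaged map $R=(1-\alpha)I+\alpha N$ can be over-relaxed up to factor $1/\alpha$ while remaining nonexpansive. With $\alpha:=\alpha_{\rm lin}=1/(2-\alpha_{\rm fw})$ and $\alpha_{\rm fw}=L_h/(2\mu_x)$ (or $\alpha=1/2$ when $L_h=0$), write $\widehat{\mathcal T}_{\rm lin}=(1-\alpha)I+\alpha N$ with $N$ nonexpansive in the $\mathcal M$-metric. Then
\[
\mathcal S_\gamma=(1+\gamma)\widehat{\mathcal T}_{\rm lin}-\gamma I
=\bigl(1-(1+\gamma)\alpha\bigr)I+(1+\gamma)\alpha N .
\]
Set $\lambda:=(1+\gamma)\alpha$. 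Since $\gamma>-1$, we have $\lambda>0$. If $\lambda\le1$, then $\mathcal S_\gamma$ is a convex combination of $I$ and $N$, hence nonexpansive.

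The condition $\lambda\le1$ reads $1+\gamma\le 1/\alpha=2-\alpha_{\rm fw}$, i.e.\ $\gamma\le 1-L_h/(2\mu_x)$, which is exactly the stated range. In the case $L_h=0$, $\alpha=1/2$ and the bound becomes $\gamma\le1$, matching the formula with $L_h=0$.

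As a cross-check, or as an alternative route that avoids introducing $N$, I would use the inequality form of averagedness established at the end of the proof of Proposition~\ref{prop:sec32_averaged}:
\[
\|t\|_{\mathcal M}^2+(1-\alpha_{\rm fw})\|d-t\|_{\mathcal M}^2\le\|d\|_{\mathcal M}^2 .
\]
Here $d=w-w'$ and $t=\widehat{\mathcal T}_{\rm lin}w-\widehat{\mathcal T}_{\rm lin}w'$, so $\mathcal S_\gamma w-\mathcal S_\gamma w'=d-(1+\gamma)(d-t)$. Expanding gives
\[
\|d-(1+\gamma)(d-t)\|_{\mathcal M}^2=\|d\|_{\mathcal M}^2-2(1+\gamma)\langle d,d-t\rangle_{\mathcal M}+(1+\gamma)^2\|d-t\|_{\mathcal M}^2 .
\]
The averagedness inequality rewrites as $2\langle d,d-t\rangle_{\mathcal M}\ge(2-\alpha_{\rm fw})\|d-t\|_{\mathcal M}^2$. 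Substituting yields the bound
\[
\|d\|_{\mathcal M}^2-(1+\gamma)\bigl(2-\alpha_{\rm fw}-(1+\gamma)\bigr)\|d-t\|_{\mathcal M}^2\le\|d\|_{\mathcal M}^2
\]
precisely when $1+\gamma\le2-\alpha_{\rm fw}$, with $1+\gamma>0$.

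There is no real obstacle. The only point needing care is the equivalence between the decomposition definition of $\alpha$-averagedness and the inequality characterization in the $\mathcal M$-metric, which the paper already takes as standard. The direct expansion above sidesteps even that, so I would present the direct computation as the proof.
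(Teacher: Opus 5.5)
Your proposal is correct, and your first paragraph is exactly the paper's proof: write $\widehat{\mathcal T}_{\rm lin}=(1-\alpha_{\rm lin})I+\alpha_{\rm lin}\mathcal N$, express $\mathcal S_\gamma$ as a convex combination of $I$ and $\mathcal N$ with weight $(1+\gamma)\alpha_{\rm lin}\in(0,1]$, and read off $\gamma\le\alpha_{\rm lin}^{-1}-1=1-L_h/(2\mu_x)$. Your direct expansion is the same argument at the level of inequalities and is also valid, but for $L_h=0$ you must cite the firm-nonexpansiveness inequality $\|t\|_{\mathcal M}^2+\|d-t\|_{\mathcal M}^2\le\|d\|_{\mathcal M}^2$ (the case $\alpha_{\rm fw}=0$), because the displayed inequality at the end of the paper's proof is derived only for $L_h>0$.
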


\begin{proof}
By Proposition~\ref{prop:sec32_averaged},
$\widehat{\mathcal T}_{\rm lin}=(1-\alpha_{\rm lin})I+\alpha_{\rm lin}\mathcal N$
for some nonexpansive map $\mathcal N$ in the $\mathcal M$-metric. Therefore
\[
\mathcal S_\gamma=
\bigl(1-(1+\gamma)\alpha_{\rm lin}\bigr)I
+(1+\gamma)\alpha_{\rm lin}\mathcal N.
\]
Thus $\mathcal S_\gamma$ is nonexpansive whenever
$(1+\gamma)\alpha_{\rm lin}\le1$ and the coefficients are nonnegative. Since
$\alpha_{\rm lin}>0$, these two requirements are equivalent to
\[
-1<\gamma\le \alpha_{\rm lin}^{-1}-1=1-\frac{L_h}{2\mu_x},
\]
which is exactly the claimed range.
\end{proof}

\begin{remark}[Endpoint]
\label{rem:linearized-endpoint-convention}
The nonexpansiveness proof permits the endpoint
$\gamma=1-L_h/(2\mu_x)$. In the algorithmic statements below we use the
corresponding open interval, so that endpoint residual cases do not have to be
separated from convergence statements. The endpoint can still be included when
only the nonexpansive estimate is invoked.
\end{remark}

For the two main choices used later, the following Schur complements and
convenient sufficient conditions are obtained.

\begin{corollary}\label{cor:sec32_CP}
In the special case $h=f$, where $L_h=L_f$, the following statements hold:
\begin{enumerate}
\item[(i)] if $P=\tau^{-1}I-\sigma A^\top A$ and $Q=\rho^{-1}I$, then
\[
\mu_x=\lambda_{\min}\!\bigl(\tau^{-1}I-(\sigma+\rho)A^\top A\bigr),
\]
and the threshold $\mu_x>L_f/2$ is implied by
\[
\tau\Bigl((\sigma+\rho)\|A\|^2+\frac{L_f}{2}\Bigr)<1;
\]
\item[(ii)] if $P=\tau^{-1}I$ and $Q=\rho^{-1}I$, then
\[
\mu_x=\lambda_{\min}\!\bigl(\tau^{-1}I-\rho A^\top A\bigr),
\]
and the threshold $\mu_x>L_f/2$ is implied by
\[
\tau\Bigl(\rho\|A\|^2+\frac{L_f}{2}\Bigr)<1.
\]
\end{enumerate}
\end{corollary}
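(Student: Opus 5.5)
The plan is to compute the Schur complement $S_x=P-A^\top Q^{-1}A$ explicitly for each metric. Then I will bound its smallest eigenvalue from below using $\lambda_{\max}(A^\top A)=\|A\|^2$. With $Q=\rho^{-1}I$ we have $Q^{-1}=\rho I$, so $A^\top Q^{-1}A=\rho A^\top A$.

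In case (i), $P=\tau^{-1}I-\sigma A^\top A$, which gives $S_x=\tau^{-1}I-(\sigma+\rho)A^\top A$. In case (ii), $P=\tau^{-1}I$, which gives $S_x=\tau^{-1}I-\rho A^\top A$. Taking $\lambda_{\min}$ yields the displayed formulas for $\mu_x$ directly from the definition $\mu_x=\lambda_{\min}(P-A^\top Q^{-1}A)$ in Section~\ref{sec3:averagedness_relaxation}, with $L_h=L_f$ since $h=f$.

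For the sufficient conditions, note that $S_x$ is a shift of a multiple of the positive semidefinite matrix $A^\top A$, whose eigenvalues lie in $[0,\|A\|^2]$. Hence, in case (i),
\[
\mu_x=\tau^{-1}-(\sigma+\rho)\lambda_{\max}(A^\top A)=\tau^{-1}-(\sigma+\rho)\|A\|^2 ,
\]
with the analogous expression $\tau^{-1}-\rho\|A\|^2$ in case (ii). This uses $\sigma,\rho\ge0$; here $\rho>0$ because $Q\succ0$, and $\sigma\ge0$ by the standing convention. Therefore $\mu_x>L_f/2$ holds if and only if $\tau^{-1}>(\sigma+\rho)\|A\|^2+L_f/2$. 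Multiplying by $\tau>0$ shows this is equivalent to the stated inequality $\tau\bigl((\sigma+\rho)\|A\|^2+L_f/2\bigr)<1$, and the same computation with $\sigma$ removed handles (ii). I will also note that the hypothesis implies $\mu_x>0$, so Assumption~\ref{ass:sec31_metric} is consistent with these conditions.

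There is no genuine obstacle; the argument is a direct calculation. The only point needing care is the eigenvalue identity $\lambda_{\min}(cI-\kappa A^\top A)=c-\kappa\|A\|^2$ for $\kappa\ge0$. I would justify it through the spectral decomposition of the symmetric matrix $A^\top A$, or alternatively bound $\langle S_xu,u\rangle\ge(\tau^{-1}-\kappa\|A\|^2)\|u\|^2$ directly, which already suffices for the implication. Since the statement only claims "is implied by", the one-sided Rayleigh-quotient bound is enough.
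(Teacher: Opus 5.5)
Your proposal is correct and follows essentially the same route as the paper. You compute $P-A^\top Q^{-1}A=\tau^{-1}I-(\sigma+\rho)A^\top A$ (resp.\ $\tau^{-1}I-\rho A^\top A$) and bound its smallest eigenvalue below by $\tau^{-1}-(\sigma+\rho)\|A\|^2$ (resp.\ $\tau^{-1}-\rho\|A\|^2$). Your extra claim that this bound is an equality, which turns the sufficient condition into an ``if and only if'', is valid when $\|A\|$ is the spectral norm and $\sigma+\rho\ge0$. As you note, though, only the one-sided inequality the paper uses is needed.
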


\begin{proof}
When $h=f$, we have $L_h=L_f$. In case (i),
$Q^{-1}=\rho I$ and
$P-A^\top Q^{-1}A=\tau^{-1}I-(\sigma+\rho)A^\top A$. Therefore
\[
\mu_x\ge \tau^{-1}-(\sigma+\rho)\|A\|^2,
\]
and the displayed condition in (i) implies $\mu_x>L_f/2$. Case (ii) is
identical, with $P-A^\top Q^{-1}A=\tau^{-1}I-\rho A^\top A$, and the displayed
condition in (ii) again implies $\mu_x>L_f/2$.
\end{proof}

\begin{remark}
A direct curvature-compensation argument yields the more conservative condition
$\mu_x\ge L_h$. The operator-theoretic PFBS analysis shows that averagedness
already holds under the weaker threshold $\mu_x>L_h/2$. In particular, the
admissible relaxation range is governed by the Schur complement
quantity $\mu_x=\lambda_{\min}(P-A^\top Q^{-1}A)$.
\end{remark}

\subsection{Reflected Halpern PFBS and Nonergodic KKT Rates}
\label{sec3:halpern_pfbs_rates}

Theorem~\ref{thm:sec32_relaxation} gives the map-level nonexpansiveness needed
for the reflected Halpern PFBS iteration, with
$\gamma\in\bigl(-1,\,1-L_h/(2\mu_x)\bigr)$.
Using Proposition~\ref{prop:alg_equiv_linearized}, the PFBS shadow step can be
written explicitly in primal--dual variables. Algorithm~\ref{alg:halpern_pfbs}
is the linearized counterpart of Algorithm~\ref{alg:halpern_dppm}: first compute
the PFBS shadow point, then apply the same reflection and Halpern anchoring.

\begin{algorithm}[H]
\caption{Reflected Halpern-Accelerated Linearized PFBS}
\label{alg:halpern_pfbs}
{\small
\begin{algorithmic}[1]
\STATE \textbf{Input:} $w^0=(x^0,y^0)$ and
$\gamma\in\bigl(-1,\,1-L_h/(2\mu_x)\bigr)$.
\FOR{$k=0,1,2,\ldots$}
\STATE \textbf{Step 1.} Compute
\[
\hat x^k=\arg\min_x
\left\{
\begin{aligned}
& f(x)+g(x)-h(x)+\langle\nabla h(x^k),x-x^k\rangle
 -\langle y^k,Ax-b\rangle \\
&\qquad +\frac{\sigma}{2}\|Ax-b\|^2
 +\frac12\|x-x^k\|_P^2
\end{aligned}
\right\}
\]
\STATE \textbf{Step 2.} Set
$\hat y^k=y^k+Q^{-1}\bigl(b-A(2\hat x^k-x^k)\bigr)$ and
$\hat w^k=(\hat x^k,\hat y^k)$.
\STATE \textbf{Step 3.} $\bar w^k=(1+\gamma)\hat w^k-\gamma w^k$.
\STATE \textbf{Step 4.} $w^{k+1}=(w^0+(k+1)\bar w^k)/(k+2)$.
\ENDFOR
\STATE \textbf{Output:} shadow sequence $\{\hat w^k\}$.
\end{algorithmic}
}
\end{algorithm}

The following proposition records convergence and fixed-point residual estimates
for this reflected Halpern PFBS scheme.

\begin{proposition}\label{prop:halpern_pfbs_basic}
Assume that \(h\) is convex and continuously differentiable, that \(\nabla h\)
is \(L_h\)-Lipschitz continuous, and that \eqref{eq:sec31_h_assump},
Assumption~\ref{ass:sec31_metric}, $\mu_x>L_h/2$, and
$\mathcal T^{-1}(0)\neq\emptyset$. The last condition is the explicit
KKT-nonemptiness requirement inherited from the standing convention. Fix
$\gamma\in\bigl(-1,\,1-L_h/(2\mu_x)\bigr)$ and define
$\mathcal S_\gamma:=(1+\gamma)\widehat{\mathcal T}_{\rm lin}-\gamma I$. Let
the sequences $\{w^k\}$, $\{\hat w^k\}$, and $\{\bar w^k\}$ be generated by
Algorithm~\ref{alg:halpern_pfbs}.
Then there exists $w^\ast\in\mathcal T^{-1}(0)$ such that
\begin{equation}\label{eq:pfbs_conv}
w^k\to w^\ast,
\qquad
\bar w^k\to w^\ast,
\qquad
\hat w^k\to w^\ast,
\end{equation}
and, for every $k\ge0$,
\begin{equation}\label{eq:pfbs_residuals}
\begin{aligned}
\|w^k-\bar w^k\|_{\mathcal M}
&\le \frac{2}{k+1}\,\mathrm{dist}_{\mathcal M}\bigl(w^0,\mathcal T^{-1}(0)\bigr),\\
\|w^k-\hat w^k\|_{\mathcal M}
&\le \frac{2}{(1+\gamma)(k+1)}\,\mathrm{dist}_{\mathcal M}\bigl(w^0,\mathcal T^{-1}(0)\bigr).
\end{aligned}
\end{equation}
\end{proposition}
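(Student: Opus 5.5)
The plan is to treat Algorithm~\ref{alg:halpern_pfbs} as the classical Halpern iteration $w^{k+1}=\frac{1}{k+2}w^0+\frac{k+1}{k+2}\mathcal S_\gamma(w^k)$ applied to $\mathcal S_\gamma$. By Theorem~\ref{thm:sec32_relaxation}, $\mathcal S_\gamma$ is nonexpansive in the $\mathcal M$-norm, and $\mathcal M\succ0$ by Assumption~\ref{ass:sec31_metric}. Hence we work in the genuine Hilbert space $(\mathbb R^n\times\mathbb R^m,\langle\cdot,\cdot\rangle_{\mathcal M})$, and no degenerate-metric machinery is needed.

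\textbf{Fixed points.} First I identify the fixed points: $\operatorname{Fix}\mathcal S_\gamma=\operatorname{Fix}\widehat{\mathcal T}_{\rm lin}=\mathcal T^{-1}(0)$, using $\gamma\ne-1$. Indeed, $w=\widehat{\mathcal T}_{\rm lin}(w)$ iff $\mathcal Mw-\mathcal B_h(w)\in\mathcal Mw+\mathcal A_h(w)$, iff $0\in(\mathcal A_h+\mathcal B_h)(w)=\mathcal T(w)$. This set is nonempty by hypothesis, and it is closed and convex as the fixed-point set of a nonexpansive map.

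\textbf{Residual rate.} Next I prove the rate for $\|w^k-\mathcal S_\gamma w^k\|_{\mathcal M}$, noting that $\bar w^k=\mathcal S_\gamma(w^k)$. I would invoke the standard tight Halpern bound for nonexpansive maps with step $\frac1{k+2}$ (Lieder~\cite{lieder2021convergence}): $\|w^k-\mathcal S_\gamma w^k\|\le \frac{2\|w^0-w^\ast\|}{k+1}$ for every fixed point $w^\ast$, in any Hilbert norm. Alternatively, this follows from Proposition~\ref{prop:general_convergence}(ii) in its abstract form for nonexpansive operators, i.e.\ the same cited result from \cite{sun2025accelerating} with $\mathcal M$ positive definite. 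Taking the infimum over $w^\ast\in\mathcal T^{-1}(0)$ gives the first line of \eqref{eq:pfbs_residuals}. The second line follows from $w^k-\bar w^k=(1+\gamma)(w^k-\hat w^k)$.

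\textbf{Convergence.} For \eqref{eq:pfbs_conv}, Halpern's/Wittmann's theorem, with anchoring coefficients $\frac1{k+2}\to0$, $\sum\frac1{k+2}=\infty$, and $\sum|\frac1{k+2}-\frac1{k+3}|<\infty$, gives strong convergence of $w^k$ to $w^\ast:=\Pi^{\mathcal M}_{\mathcal T^{-1}(0)}(w^0)$. Since $\|w^k-\hat w^k\|_{\mathcal M}\to0$ by the rate, and $\|\cdot\|_{\mathcal M}$ is equivalent to the Euclidean norm, we get $\hat w^k\to w^\ast$. Then $\bar w^k=(1+\gamma)\hat w^k-\gamma w^k\to w^\ast$.

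\textbf{Main obstacle.} The main care point is verifying that the cited abstract Halpern estimate applies verbatim to $\mathcal S_\gamma$ in the $\mathcal M$-metric. Nonexpansiveness holds only up to the endpoint $1-L_h/(2\mu_x)$, which is fine because $\gamma$ lies in the open interval. The endpoint issue that forced $\gamma<1$ in Theorem~\ref{thm:kkt_gap_rate} does not arise here, since convergence of $w^k$ comes directly from Wittmann's theorem for nonexpansive maps.
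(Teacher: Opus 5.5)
Your proposal is correct and follows essentially the same route as the paper: nonexpansiveness of $\mathcal S_\gamma$ in the $\mathcal M$-Hilbert space from Theorem~\ref{thm:sec32_relaxation}, the identification $\operatorname{Fix}(\mathcal S_\gamma)=\mathcal T^{-1}(0)$, the Halpern convergence theorem, and Lieder's $2/(k+1)$ residual bound. The only differences are cosmetic: you get $\hat w^k\to w^\ast$ from the vanishing residual, whereas the paper gets $\bar w^k\to w^\ast$ from continuity of $\mathcal S_\gamma$; also, your fallback via Proposition~\ref{prop:general_convergence}(ii) does not apply verbatim, since that result concerns the firmly nonexpansive dPPM map while $\widehat{\mathcal T}_{\rm lin}$ is only $\alpha_{\rm lin}$-averaged, but Lieder's bound alone suffices.
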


\begin{proof}
By Theorem~\ref{thm:sec32_relaxation}, $\mathcal S_\gamma$ is nonexpansive in
the $\mathcal M$-metric. Here and below,
\(\operatorname{Fix}(R):=\{w:\,Rw=w\}\) denotes the fixed-point set of a map
\(R\). Since $1+\gamma>0$,
\[
w=\mathcal S_\gamma(w)
\iff (1+\gamma)\bigl(\widehat{\mathcal T}_{\rm lin}(w)-w\bigr)=0
\iff w=\widehat{\mathcal T}_{\rm lin}(w),
\]
which gives $\operatorname{Fix}(\mathcal S_\gamma)=\operatorname{Fix}(\widehat{\mathcal T}_{\rm lin})$.
By the definition of $\widehat{\mathcal T}_{\rm lin}$,
\[
\begin{aligned}
w=\widehat{\mathcal T}_{\rm lin}(w)
&\iff \mathcal Mw-\mathcal B_h(w)\in \mathcal Mw+\mathcal A_h(w)\\
&\iff 0\in(\mathcal A_h+\mathcal B_h)(w)=\mathcal T(w),
\end{aligned}
\]
therefore $\operatorname{Fix}(\mathcal S_\gamma)=\mathcal T^{-1}(0)$.

Algorithm~\ref{alg:halpern_pfbs} is precisely the Halpern iteration associated
with the nonexpansive map $\mathcal S_\gamma$ in the Hilbert space
$(\mathbb R^n\times\mathbb R^m,\langle\cdot,\cdot\rangle_{\mathcal M})$,
with anchor $w^0$ and weights $1/(k+2)$. The Halpern convergence theorem for
nonexpansive mappings therefore yields $w^k\to w^\ast$ for some
$w^\ast\in\mathcal T^{-1}(0)$; see, e.g.,
\cite{halpern1967fixed,lieder2021convergence}. Continuity of
$\mathcal S_\gamma$ gives $\bar w^k=\mathcal S_\gamma(w^k)\to w^\ast$, and
\[
\hat w^k=\frac{1}{1+\gamma}(\bar w^k+\gamma w^k)
\]
then implies $\hat w^k\to w^\ast$, proving \eqref{eq:pfbs_conv}.

The corresponding Halpern residual estimate \cite{lieder2021convergence} gives
\[
\|w^k-\bar w^k\|_{\mathcal M}
\le \frac{2}{k+1}\,\mathrm{dist}_{\mathcal M}\bigl(w^0,\mathcal T^{-1}(0)\bigr).
\]
Since $w^k-\bar w^k=(1+\gamma)(w^k-\hat w^k)$, the second estimate in
\eqref{eq:pfbs_residuals} follows immediately.
\end{proof}

It remains to translate the fixed-point residual estimate into the KKT residual
and the objective-value gap of the original constrained problem. Set
$\Phi:=f+g$, and recall from \eqref{eq:kkt_mapping_residual} the original KKT
mapping $\mathcal R$ and residual $r_{\mathrm{KKT}}$.
For $w=(x,y)$, we also write
$r_{\mathrm{KKT}}(w):=r_{\mathrm{KKT}}(x,y)$.
Let $\mathcal E_\sigma$ denote the linear map
$\mathcal E_\sigma(u,v):=(u-\sigma A^\top v,v)$.

\begin{theorem}
\label{thm:halpern_pfbs_rate}
Under the assumptions of Proposition~\ref{prop:halpern_pfbs_basic}, including
the standing KKT-nonemptiness condition
$\mathcal T^{-1}(0)\neq\emptyset$, let
$w^\ast=(x^\ast,y^\ast)$ be the common limit from \eqref{eq:pfbs_conv}. Then
$w^\ast\in\mathcal T^{-1}(0)=\mathcal R^{-1}(0)$ and, for every $k\ge0$,
\[
r_{\mathrm{KKT}}(\hat w^k)
\le
\frac{2\|\mathcal E_\sigma\|}{(1+\gamma)(k+1)}
\left(\sqrt{\|\mathcal M\|}+\frac{L_h}{\sqrt{\mu_x}}\right)
\mathrm{dist}_{\mathcal M}\bigl(w^0,\mathcal T^{-1}(0)\bigr),
\]
in particular $r_{\mathrm{KKT}}(\hat w^k)=O(1/k)$. Set
\[
C_\ast:=\sup_{k\ge0}\bigl(\|\hat x^k-x^\ast\|+\|\hat y^k\|\bigr)+\|y^\ast\|.
\]
Then $C_\ast<\infty$ and
\[
\bigl|\Phi(\hat x^k)-\Phi(x^\ast)\bigr|
\le C_\ast\,r_{\mathrm{KKT}}(\hat w^k),
\]
and consequently $|\Phi(\hat x^k)-\Phi(x^\ast)|=O(1/k)$.
\end{theorem}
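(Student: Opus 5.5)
The argument follows the proof of Theorem~\ref{thm:kkt_gap_rate}. The new ingredient is the forward correction: the resolvent inclusion for $\hat w^k$ now involves $\mathcal B_h$ evaluated at both $w^k$ and $\hat w^k$. The identity $\mathcal T^{-1}(0)=\mathcal R^{-1}(0)$ is proved exactly as before, since the second component of $\mathcal T$ is $Ax-b$ and the augmentation term vanishes on its zero set. Convergence of $\hat w^k$ to $w^\ast$ is supplied by Proposition~\ref{prop:halpern_pfbs_basic}.

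\textbf{Step 1: a residual element of $\mathcal T(\hat w^k)$.} From the definition of $\widehat{\mathcal T}_{\rm lin}$,
\[
\mathcal M w^k-\mathcal B_h(w^k)\in \mathcal M\hat w^k+\mathcal A_h(\hat w^k).
\]
Adding $\mathcal B_h(\hat w^k)$ to both sides and using $\mathcal T=\mathcal A_h+\mathcal B_h$ gives
\[
\mathcal M(w^k-\hat w^k)+\mathcal B_h(\hat w^k)-\mathcal B_h(w^k)\in\mathcal T(\hat w^k).
\]
Hence
\[
\mathrm{dist}\bigl(0,\mathcal T(\hat w^k)\bigr)\le \|\mathcal M(w^k-\hat w^k)\|+\|\nabla h(\hat x^k)-\nabla h(x^k)\|.
\]
Both terms are controlled by the $\mathcal M$-residual. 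For the first, $\|\mathcal M d\|\le\sqrt{\|\mathcal M\|}\,\|d\|_{\mathcal M}$. For the second, Lipschitz continuity of $\nabla h$ combined with \eqref{eq:sec32_xdom} gives
\[
\|\nabla h(\hat x^k)-\nabla h(x^k)\|\le L_h\|\hat x^k-x^k\|\le \frac{L_h}{\sqrt{\mu_x}}\|w^k-\hat w^k\|_{\mathcal M}.
\]

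\textbf{Step 2: transfer to $r_{\mathrm{KKT}}$ and insert the Halpern rate.} The set identity $\mathcal R(w)=\mathcal E_\sigma(\mathcal T(w))$ gives
\[
r_{\mathrm{KKT}}(\hat w^k)\le\|\mathcal E_\sigma\|\,\mathrm{dist}\bigl(0,\mathcal T(\hat w^k)\bigr).
\]
Inserting the second estimate of \eqref{eq:pfbs_residuals} then yields exactly the stated bound with the constant
\[
\sqrt{\|\mathcal M\|}+\frac{L_h}{\sqrt{\mu_x}}.
\]

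\textbf{Step 3: objective gap.} This repeats the convexity argument of Theorem~\ref{thm:kkt_gap_rate} verbatim.
\begin{itemize}
\item $\mathcal R(\hat w^k)$ is nonempty, closed and convex, so it has a least-norm element $r^k$. Write $r^k=(r^k_x,\,A\hat x^k-b)$ with $r^k_x+A^\top\hat y^k\in\partial\Phi(\hat x^k)$.
\item The subgradient inequality at $\hat x^k$ gives an upper bound with coefficient $\|\hat x^k-x^\ast\|+\|\hat y^k\|$.
\item The subgradient inequality at $x^\ast$, using $A^\top y^\ast\in\partial\Phi(x^\ast)$, gives a lower bound with coefficient $\|y^\ast\|$.
\end{itemize}
The constant $C_\ast$ is finite because $\hat w^k\to w^\ast$.

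The only genuinely new point is Step 1: checking that the forward-step mismatch $\mathcal B_h(\hat w^k)-\mathcal B_h(w^k)$ is the correct correction term, and that it is dominated in the $\mathcal M$-metric through $\mu_x$. Everything else is inherited from the exact case.
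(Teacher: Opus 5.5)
Your proposal is correct and follows essentially the same route as the paper's proof. Both use the inclusion $\mathcal M(w^k-\hat w^k)+\mathcal B_h(\hat w^k)-\mathcal B_h(w^k)\in\mathcal T(\hat w^k)$, bound its two terms by $\sqrt{\|\mathcal M\|}$ and $L_h/\sqrt{\mu_x}$ via \eqref{eq:sec32_xdom}, transfer to $r_{\mathrm{KKT}}$ through $\mathcal E_\sigma$, and then repeat the two-sided convexity argument with a least-norm element of $\mathcal R(\hat w^k)$ for the objective gap.
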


\begin{proof}
By Proposition~\ref{prop:halpern_pfbs_basic},
$w^k\to w^\ast$ and $\hat w^k\to w^\ast$ for some $w^\ast\in\mathcal T^{-1}(0)$.
Since the second component of $\mathcal T$ is $Ax-b$, the augmentation term
vanishes on $\mathcal T^{-1}(0)$; therefore
$\mathcal T^{-1}(0)=\mathcal R^{-1}(0)$.

From the definition of $\widehat{\mathcal T}_{\rm lin}$,
\[
\hat w^k=(\mathcal M+\mathcal A_h)^{-1}\bigl(\mathcal Mw^k-\mathcal B_h(w^k)\bigr),
\]
and therefore
\[
\mathcal Mw^k-\mathcal B_h(w^k)\in \mathcal M\hat w^k+\mathcal A_h(\hat w^k).
\]
It follows that
\[
d^k:=\mathcal M(w^k-\hat w^k)+\mathcal B_h(\hat w^k)-\mathcal B_h(w^k)
\in \mathcal T(\hat w^k),
\]
and therefore $\mathrm{dist}(0,\mathcal T(\hat w^k))\le \|d^k\|$. Since
\[
\|\mathcal B_h(\hat w^k)-\mathcal B_h(w^k)\|
=\|\nabla h(\hat x^k)-\nabla h(x^k)\|
\le L_h\|\hat x^k-x^k\|
\le \frac{L_h}{\sqrt{\mu_x}}\|w^k-\hat w^k\|_{\mathcal M},
\]
where the last step uses \eqref{eq:sec32_xdom}, and
\[
\|\mathcal M(w^k-\hat w^k)\|\le \sqrt{\|\mathcal M\|}\,\|w^k-\hat w^k\|_{\mathcal M},
\]
we obtain
\[
\mathrm{dist}\bigl(0,\mathcal T(\hat w^k)\bigr)
\le \left(\sqrt{\|\mathcal M\|}+\frac{L_h}{\sqrt{\mu_x}}\right)
\|w^k-\hat w^k\|_{\mathcal M}.
\]
Combining this with \eqref{eq:pfbs_residuals} yields
\[
\mathrm{dist}\bigl(0,\mathcal T(\hat w^k)\bigr)
\le \frac{2}{(1+\gamma)(k+1)}
\left(\sqrt{\|\mathcal M\|}+\frac{L_h}{\sqrt{\mu_x}}\right)
\mathrm{dist}_{\mathcal M}\bigl(w^0,\mathcal T^{-1}(0)\bigr).
\]
Now, for any $(x,y)$ and any $(u,v)\in\mathcal T(x,y)$,
\[
u\in \nabla f(x)+\partial g(x)-A^\top y+\sigma A^\top(Ax-b),
\qquad v=Ax-b,
\]
we have $\mathcal E_\sigma(u,v)\in\mathcal R(x,y)$. Therefore
\[
r_{\mathrm{KKT}}(\hat w^k)
\le \|\mathcal E_\sigma\|\,\mathrm{dist}\bigl(0,\mathcal T(\hat w^k)\bigr),
\]
which proves the displayed residual estimate.

For the objective-value bound, the preceding inclusion implies
$\mathcal R(\hat w^k)\neq\emptyset$. This set is closed and convex in finite
dimensions; choose a least-norm element
$r^k=(r_x^k,r_y^k)\in\mathcal R(\hat w^k)$. Then
$\|r^k\|=r_{\mathrm{KKT}}(\hat w^k)$,
$r_y^k=A\hat x^k-b$ and
$r_x^k\in\partial\Phi(\hat x^k)-A^\top\hat y^k$. Thus there exists
$\zeta^k\in\partial\Phi(\hat x^k)$ with
$\zeta^k=A^\top\hat y^k+r_x^k$. By convexity of $\Phi$,
\[
\begin{aligned}
\Phi(\hat x^k)-\Phi(x^\ast)
&\le \langle \zeta^k,\hat x^k-x^\ast\rangle \\
&=\langle r_x^k,\hat x^k-x^\ast\rangle+\langle \hat y^k,A\hat x^k-b\rangle \\
&\le (\|\hat x^k-x^\ast\|+\|\hat y^k\|)r_{\mathrm{KKT}}(\hat w^k).
\end{aligned}
\]
Likewise, since $w^\ast\in\mathcal R^{-1}(0)$, there exists
$\zeta^\ast\in\partial\Phi(x^\ast)$ such that $\zeta^\ast=A^\top y^\ast$, and
convexity gives
\[
\Phi(\hat x^k)-\Phi(x^\ast)
\ge \langle \zeta^\ast,\hat x^k-x^\ast\rangle
=\langle y^\ast,A\hat x^k-b\rangle
\ge -\|y^\ast\|r_{\mathrm{KKT}}(\hat w^k).
\]
Since $\hat w^k\to w^\ast$, the constant $C_\ast$ is finite. Combining the
upper and lower bounds yields the displayed objective-value estimate, and the
$O(1/k)$ conclusion follows from the residual estimate.
\end{proof}

\begin{remark}[Worst-case tightness]
The nonergodic residual bound in Theorem~\ref{thm:halpern_pfbs_rate} is
worst-case tight even within the linearized PFBS family. Fix
$\gamma\in(-1,1)$ and choose $\eta\in(0,1)$ such that
$(1+\gamma)\eta<1$ and $\gamma< 1-\eta/2$. Consider the scalar instance
with $g=0$, $A=0$, $b=0$, $\sigma=0$, $P=1$, $Q=1$, and
\[
        f(x)=h(x)=\frac{\eta}{2}x^2 .
\]
Then $L_h=\eta$, $\mu_x=1$, and the admissible relaxation condition is
satisfied. The PFBS shadow map is
$F(x)=\widehat{\mathcal T}_{\rm lin}(x)=(1-\eta)x$; the dual coordinate remains
zero if initialized at zero. Starting from $x^0\ne0$, the reflected Halpern
recursion gives, with
$q:=1-(1+\gamma)\eta$,
\[
        x^k=\frac{1-q^{k+1}}{(1+\gamma)\eta(k+1)}x^0,
        \qquad
        \hat x^k=(1-\eta)x^k .
\]
The KKT residual at the shadow point is
$r_{\mathrm{KKT}}(\hat x^k,0)=\eta|\hat x^k|$. Therefore
\[
        \lim_{k\to\infty}(k+1)r_{\mathrm{KKT}}(\hat x^k,0)
        =
        \frac{(1-\eta)|x^0|}{1+\gamma}>0.
\]
Thus the global nonergodic KKT-residual estimate cannot, in general, be
improved to $o(1/k)$.
\end{remark}

\section{Shadow Identification and Reduced Geometry for Reflected Halpern Schemes}
\label{sec4}

For reflected Halpern schemes, the relevant local object is the shadow point,
not the anchored Halpern state. Let
$W^\star:=\mathcal T^{-1}(0)$ denote the KKT set. For
$F\in\{\widehat{\mathcal T},\widehat{\mathcal T}_{\rm lin}\}$, a reflected
Halpern step has the form
\[
        \hat w^k=F(w^k),\qquad
        \bar w^k=(1+\gamma)\hat w^k-\gamma w^k,
        \qquad
        w^{k+1}=\frac{1}{k+2}w^0+\frac{k+1}{k+2}\bar w^k.
\]
Thus $w^{k+1}$ is obtained by anchoring and reflecting the shadow
$\hat w^k$. Even when $\hat x^k$ lies on the active manifold, the
state $x^{k+1}$ need not lie on that manifold.

The local analysis is therefore carried out at the shadow points. We prove
finite identification of $\hat x^k$ for both fixed-point maps, derive an
exact reduced residual identity under an affine-face model, and describe the
perturbed reduced dynamics retained by the anchored Halpern state.

Throughout this section, all norms and distances are Euclidean unless another
metric is explicitly specified.

\subsection{Shadow Certificates and Finite Identification}
\label{subsec:shadow-certificates-identification}

We first record that the augmented term changes the geometry of the operator,
but not the solution set. Recall from \eqref{eq:kkt_mapping_residual} the
original KKT mapping \(\mathcal R\), and define the augmented correction by
\[
        \mathcal C_\sigma(x,y):=
        \binom{\sigma A^\top(Ax-b)}{0}.
\]
Then $\mathcal T=\mathcal R+\mathcal C_\sigma$.

\begin{proposition}
\label{prop:augmented-term-preserves-kkt-set}
One has $\mathcal T^{-1}(0)=\mathcal R^{-1}(0)=W^\star$. Moreover, for every
$w=(x,y)\in\mathbb R^n\times\mathbb R^m$ and every
$w^\star=(x^\star,y^\star)\in W^\star$,
\[
        \big\langle \mathcal C_\sigma(w),w-w^\star\big\rangle
        =\sigma\|Ax-b\|^2.
\]
\end{proposition}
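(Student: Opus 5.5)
The argument splits into two parts: the equality of zero sets, and the inner-product identity. Both follow directly from the definitions \eqref{T_AL}, \eqref{eq:kkt_mapping_residual} and the decomposition $\mathcal T=\mathcal R+\mathcal C_\sigma$. Since $\mathcal C_\sigma$ is single-valued, this decomposition holds as a set-valued identity: for every $w$, $\mathcal T(w)=\mathcal R(w)+\mathcal C_\sigma(w)$.

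For the zero sets, take $w=(x,y)$ with $0\in\mathcal T(w)$. The second component gives $Ax-b=0$, so $\mathcal C_\sigma(w)=0$ and hence $\mathcal T(w)=\mathcal R(w)$, which means $0\in\mathcal R(w)$. Conversely, if $0\in\mathcal R(w)$, the second component of $\mathcal R$ again forces $Ax=b$. Then $\mathcal C_\sigma(w)=0$ and $0\in\mathcal T(w)$. This gives $\mathcal T^{-1}(0)=\mathcal R^{-1}(0)$, and this common set is $W^\star$ by definition. The same observation was already used in the proofs of Theorems~\ref{thm:kkt_gap_rate} and \ref{thm:halpern_pfbs_rate}, so this step only needs to be restated.

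For the identity, fix $w=(x,y)$ and $w^\star=(x^\star,y^\star)\in W^\star$. Since the dual component of $\mathcal C_\sigma(w)$ is zero, only the primal block contributes:
\[
\langle \mathcal C_\sigma(w),w-w^\star\rangle=\sigma\langle A^\top(Ax-b),x-x^\star\rangle=\sigma\langle Ax-b,Ax-Ax^\star\rangle .
\]
By the first part, $Ax^\star=b$. Substituting this turns $Ax-Ax^\star$ into $Ax-b$, and the right-hand side becomes $\sigma\|Ax-b\|^2$.

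Nothing here is a genuine obstacle; the argument is a short computation. The only point needing care is the justification of $Ax^\star=b$. It must come from membership $w^\star\in\mathcal R^{-1}(0)$, via the second component of $\mathcal R$, and not from any convergence statement. That is why the zero-set equality is proved first and the identity second.
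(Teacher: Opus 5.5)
Your proof is correct and follows the paper's argument essentially verbatim: the common second component $Ax-b$ forces $\mathcal C_\sigma(w)=0$ at any zero of either operator, so the zero sets coincide. Then $Ax^\star=b$, read off from $w^\star\in W^\star$, turns $\sigma\langle A^\top(Ax-b),x-x^\star\rangle$ into $\sigma\|Ax-b\|^2$, exactly as in the paper.
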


\begin{proof}
Since $\mathcal T=\mathcal R+\mathcal C_\sigma$, and since the common second
component $Ax-b$ forces $\mathcal C_\sigma(x,y)=0$ at every zero of either
operator, their zero sets coincide. If
$w^\star=(x^\star,y^\star)\in W^\star$, then $Ax^\star=b$, and
therefore
\[
        \big\langle \mathcal C_\sigma(w),w-w^\star\big\rangle
        =\sigma\big\langle A^\top(Ax-b),x-x^\star\big\rangle
        =\sigma\|Ax-b\|^2.
\]
\end{proof}

The next proposition derives the subgradient certificates generated by the
shadow point. The statement is formulated simultaneously for the exact map and
the linearized map.
For $F\in\{\widehat{\mathcal T},\widehat{\mathcal T}_{\rm lin}\}$ and
$\hat w=F(w)$, write $w=(x,y)$ and
$\hat w=(\hat x,\hat y)$. Define the correction term
\[
        \Delta_F(w,\hat w)
        :=
        \begin{cases}
        0,
        & F=\widehat{\mathcal T},\\[1mm]
        \nabla h(\hat x)-\nabla h(x),
        & F=\widehat{\mathcal T}_{\rm lin}.
        \end{cases}
\]

\begin{proposition}
\label{prop:shadow-subgradient-certificates}
Let $F\in\{\widehat{\mathcal T},\widehat{\mathcal T}_{\rm lin}\}$, let
$w=(x,y)$, and let $\hat w=(\hat x,\hat y)=F(w)$. Then there exists
$v_F(w)\in\partial g(\hat x)$ such that
\begin{equation}
\label{eq:shadow-certificate}
        v_F(w)
        =
        A^\top y
        -\nabla f(\hat x)
        -\sigma A^\top(A\hat x-b)
        -P(\hat x-x)
        +\Delta_F(w,\hat w).
\end{equation}
Consequently, if $w^j\to w^\star=(x^\star,y^\star)\in W^\star$ and
$\hat w^j=F(w^j)\to w^\star$, then
\begin{equation}
\label{eq:shadow-certificate-limit}
        v_F(w^j)\to
        v^\star:=A^\top y^\star-\nabla f(x^\star)
        \in\partial g(x^\star).
\end{equation}
\end{proposition}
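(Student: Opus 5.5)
The plan is to read the certificate directly off the first component of the resolvent inclusion that defines $\hat w=F(w)$, treating the two maps in parallel.

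\emph{Exact map $F=\widehat{\mathcal T}$.} By Proposition~\ref{prop:dppm_representation}, or directly from the definition $\widehat{\mathcal T}=(\mathcal M+\mathcal T)^{-1}\mathcal M$, we have $\mathcal M w\in\mathcal M\hat w+\mathcal T(\hat w)$. The first block of this inclusion reads
\[
Px+A^\top y\in P\hat x+A^\top\hat y+\nabla f(\hat x)+\partial g(\hat x)-A^\top\hat y+\sigma A^\top(A\hat x-b).
\]
The two $A^\top\hat y$ terms cancel, since the off-diagonal block of $\mathcal M$ exactly offsets the $-A^\top\hat y$ in $\mathcal T$. Because $f$ is differentiable, $\partial\Phi=\nabla f+\partial g$. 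Solving for the $\partial g(\hat x)$ element therefore gives an element of $\partial g(\hat x)$ equal to $A^\top y-\nabla f(\hat x)-\sigma A^\top(A\hat x-b)-P(\hat x-x)$, which is \eqref{eq:shadow-certificate} with $\Delta_F=0$.

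\emph{Linearized map $F=\widehat{\mathcal T}_{\rm lin}$.} The proof of Proposition~\ref{prop:alg_equiv_linearized} already gives the primal inclusion
\[
0\in\partial g(\hat x)+\nabla f(\hat x)-\nabla h(\hat x)+\nabla h(x)-A^\top y+\sigma A^\top(A\hat x-b)+P(\hat x-x).
\]
Rearranging gives an element of $\partial g(\hat x)$ equal to the same expression plus $\nabla h(\hat x)-\nabla h(x)=\Delta_F(w,\hat w)$. In both cases, define $v_F(w)$ to be this element. No selection issue arises, because the inclusion itself exhibits a specific element of $\partial g(\hat x)$.

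\emph{The limit statement.} Suppose $w^j\to w^\star$ and $\hat w^j\to w^\star$. Then the following hold:
\begin{itemize}
\item $\nabla f(\hat x^j)\to\nabla f(x^\star)$, by continuity of $\nabla f$;
\item $A\hat x^j-b\to Ax^\star-b=0$, since $w^\star\in W^\star$ forces feasibility;
\item $P(\hat x^j-x^j)\to0$;
\item $\Delta_F(w^j,\hat w^j)\to0$, because $\nabla h$ is $L_h$-Lipschitz and $\hat x^j-x^j\to0$;
\item $A^\top y^j\to A^\top y^\star$.
\end{itemize}
Hence $v_F(w^j)\to A^\top y^\star-\nabla f(x^\star)=v^\star$.

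It remains to check that $v^\star\in\partial g(x^\star)$. This follows directly from $0\in\mathcal R(w^\star)$, which reads $0\in\nabla f(x^\star)+\partial g(x^\star)-A^\top y^\star$. Alternatively, it follows from closedness of the graph of $\partial g$, since $(\hat x^j,v_F(w^j))\in\operatorname{gph}\partial g$ converges to $(x^\star,v^\star)$. The KKT-set route is simpler, so I would use it, with $W^\star=\mathcal R^{-1}(0)$ taken from Proposition~\ref{prop:augmented-term-preserves-kkt-set}.

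I do not expect a real obstacle. The only point needing care is the bookkeeping: the $\hat y$ term must cancel, so that the certificate involves the \emph{anchored} dual $y$ rather than $\hat y$, and the $\nabla h$ correction must carry the correct sign in the linearized case.
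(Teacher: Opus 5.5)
Your proposal is correct and follows the paper's proof essentially step for step. You read the certificate off the primal block of the resolvent inclusion, with the $A^\top\hat y$ terms cancelling in the exact case and the correction $\nabla h(\hat x)-\nabla h(x)$ appearing in the linearized case; you then pass to the limit and use the KKT condition at $w^\star$ to conclude $v^\star\in\partial g(x^\star)$.
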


\begin{proof}
We first consider the exact dPPM map. Since
\[
        \hat w=(\mathcal M+\mathcal T)^{-1}\mathcal M w,
\]
we have
\[
        \mathcal M w\in \mathcal M\hat w+\mathcal T(\hat w).
\]
Comparing the primal component gives
\[
        0\in
        \nabla f(\hat x)+\partial g(\hat x)
        -A^\top y
        +\sigma A^\top(A\hat x-b)
        +P(\hat x-x).
\]
Thus there exists $v_F(w)\in\partial g(\hat x)$ satisfying
\[
        v_F(w)
        =
        A^\top y
        -\nabla f(\hat x)
        -\sigma A^\top(A\hat x-b)
        -P(\hat x-x),
\]
which is exactly \eqref{eq:shadow-certificate} with $\Delta_F=0$.

For the linearized PFBS map, the defining relation is
\[
        (\mathcal M-\mathcal B_h)w\in (\mathcal M+\mathcal A_h)\hat w.
\]
Equivalently, the primal optimality condition reads
\[
        0\in
        \partial g(\hat x)
        +\nabla f(\hat x)-\nabla h(\hat x)
        +\nabla h(x)
        -A^\top y
        +\sigma A^\top(A\hat x-b)
        +P(\hat x-x).
\]
Therefore there exists $v_F(w)\in\partial g(\hat x)$ such that
\[
        v_F(w)
        =
        A^\top y
        -\nabla f(\hat x)
        +\nabla h(\hat x)-\nabla h(x)
        -\sigma A^\top(A\hat x-b)
        -P(\hat x-x),
\]
which is again \eqref{eq:shadow-certificate}.

Now assume $w^j\to w^\star\in W^\star$ and
$\hat w^j\to w^\star$. Then
\[
        \hat x^j\to x^\star,
        \qquad
        y^j\to y^\star,
        \qquad
        A\hat x^j-b\to 0,
        \qquad
        \hat x^j-x^j\to 0.
\]
In the linearized case, continuity of $\nabla h$ gives
\(\nabla h(\hat x^j)-\nabla h(x^j)\to 0\). Passing to the limit in
\eqref{eq:shadow-certificate} yields
\(v_F(w^j)\to A^\top y^\star-\nabla f(x^\star)\). Since
$w^\star\in W^\star$, the KKT condition gives
\(A^\top y^\star-\nabla f(x^\star)\in\partial g(x^\star)\). This proves
\eqref{eq:shadow-certificate-limit}.
\end{proof}

We use the standard notion of partial smoothness for convex functions in the
sense of Lewis~\cite{lewis2002active}, with the equivalent tangent-space
form of normal sharpness in the convex setting. Here a
\(C^2\)-manifold means an embedded manifold with twice continuously
differentiable local charts; \(T_{\mathcal M}(x)\) denotes the tangent space to
a manifold \(\mathcal M\) at \(x\), \(\operatorname{par} S:=\operatorname{span}(S-S)\)
denotes the parallel subspace of a convex set \(S\), and \(\operatorname{ri} S\)
denotes its relative interior. Let $g$ be proper, closed, and convex, and let
$x^\star\in\operatorname{dom}g$. We say that $g$ is partly smooth at
$x^\star$ relative to a $C^2$-manifold $\mathcal M_g(x^\star)$ if, locally
around $x^\star$, the restriction of $g$ to $\mathcal M_g(x^\star)$ is
$C^2$, the sharpness relation
\[
        T_{\mathcal M_g(x^\star)}(x^\star)
        =
        \operatorname{par}(\partial g(x^\star))^\perp
\]
holds, and the subdifferential mapping $\partial g$ is continuous at
$x^\star$ along $\mathcal M_g(x^\star)$ in the Painlev{\'e}--Kuratowski sense.
The manifold $\mathcal M_g(x^\star)$ is then called the active manifold of
$g$ at $x^\star$.
The following assumption is the corresponding standard identifiability
condition at the limiting KKT point; see, e.g.,
\cite{hare2004identifying}.

\begin{assumption}[Partial smoothness and nondegeneracy]
\label{ass:shadow-identification}
Let $w^\star=(x^\star,y^\star)\in W^\star$. Assume that
\begin{enumerate}
\item[(i)] $g$ is proper, closed, convex, and partly smooth at $x^\star$ relative
to a $C^2$-manifold $\mathcal M_g(x^\star)$;
\item[(ii)] the nondegeneracy condition
\(A^\top y^\star-\nabla f(x^\star)
\in \operatorname{ri}\partial g(x^\star)\) holds.
\end{enumerate}
\end{assumption}

This assumption is local at the limiting KKT point. It holds for many standard
regularizers and constraints under the usual strict-complementarity condition:
for instance, for an indicator of a polyhedron or a box, the manifold is the
identified face and the relative-interior condition is the standard
nondegeneracy condition on the active multipliers.

The next theorem applies the standard identification principle for partly
smooth convex functions to the shadow sequence.

\begin{theorem}
\label{thm:finite-identification-shadow-sequence}
Let $F\in\{\widehat{\mathcal T},\widehat{\mathcal T}_{\rm lin}\}$, and let
$\{w^j\}_{j\ge0}$ be any sequence and define the shadow sequence
\[
        \hat w^j=F(w^j).
\]
Assume that
\begin{equation}
\label{eq:state-shadow-convergence}
        w^j\to w^\star,
        \qquad
        \hat w^j\to w^\star
\end{equation}
for some $w^\star=(x^\star,y^\star)\in W^\star$. If
Assumption~\ref{ass:shadow-identification} holds at $w^\star$, then there
exists an integer $J_{\rm id}\ge0$ such that
\[
        \hat x^j\in \mathcal M_g(x^\star),
        \qquad
        \forall j\ge J_{\rm id}.
\]
\end{theorem}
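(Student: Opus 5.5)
The plan is to reduce the statement to the standard identification theorem for partly smooth convex functions (Hare--Lewis \cite{hare2004identifying}). That theorem says: if \(g\) is partly smooth at \(x^\star\) relative to \(\mathcal M_g(x^\star)\), the nondegeneracy \(\bar v\in\operatorname{ri}\partial g(x^\star)\) holds, and there are sequences \(z^j\to x^\star\), \(g(z^j)\to g(x^\star)\), and \(v^j\in\partial g(z^j)\) with \(v^j\to\bar v\), then \(z^j\in\mathcal M_g(x^\star)\) for all large \(j\). We apply it with \(z^j:=\hat x^j\), \(v^j:=v_F(w^j)\), and \(\bar v:=v^\star=A^\top y^\star-\nabla f(x^\star)\). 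Three hypotheses then need to be checked.

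\emph{Step 1.} From \eqref{eq:state-shadow-convergence} we have \(\hat x^j\to x^\star\).

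\emph{Step 2.} Proposition~\ref{prop:shadow-subgradient-certificates} applies for either choice of \(F\). It gives \(v_F(w^j)\in\partial g(\hat x^j)\) and, by \eqref{eq:shadow-certificate-limit}, \(v_F(w^j)\to v^\star\in\partial g(x^\star)\). Assumption~\ref{ass:shadow-identification}(ii) states exactly that \(v^\star\in\operatorname{ri}\partial g(x^\star)\).

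\emph{Step 3 (function-value convergence).} This is the only non-automatic hypothesis, and the main obstacle, because \(g\) is merely lower semicontinuous. Lower semicontinuity gives \(\liminf_j g(\hat x^j)\ge g(x^\star)\). For the upper bound we use the subgradient inequality at \(\hat x^j\) with the certificate:
\[
g(x^\star)\ge g(\hat x^j)+\langle v_F(w^j),x^\star-\hat x^j\rangle .
\]
Since \(v_F(w^j)\) is bounded (it converges) and \(\hat x^j-x^\star\to0\), the inner product tends to zero. Hence \(\limsup_j g(\hat x^j)\le g(x^\star)\), and so \(g(\hat x^j)\to g(x^\star)\). Note that \(\hat x^j\in\operatorname{dom}\partial g\subseteq\operatorname{dom} g\), so all values are finite.

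With these three facts, the identification theorem yields an index \(J_{\rm id}\) beyond which \(\hat x^j\in\mathcal M_g(x^\star)\). In writing this out, we will check that the version of partial smoothness used in the paper (the tangent-space sharpness form together with Painlevé--Kuratowski continuity of \(\partial g\) along the manifold) coincides in the convex case with the hypotheses of \cite[Theorem~5.3]{hare2004identifying}; the paper already asserts this equivalence when it introduces the definition. No other property of \(F\) is needed beyond the certificate of Proposition~\ref{prop:shadow-subgradient-certificates}. This is why the conclusion concerns the shadow \(\hat x^j\) and not the state \(x^j\): only \(\hat x^j\) carries a subgradient of \(g\).
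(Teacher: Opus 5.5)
Your proposal is correct and follows the paper's proof. Both arguments feed the shadow certificates $v_F(w^j)\in\partial g(\hat x^j)$, with $v_F(w^j)\to v^\star\in\operatorname{ri}\partial g(x^\star)$ and $\hat x^j\to x^\star$ from Proposition~\ref{prop:shadow-subgradient-certificates}, into the Lewis/Hare--Lewis identification theorem. Your Step~3 differs only in care: it proves $g(\hat x^j)\to g(x^\star)$ via lower semicontinuity and the subgradient inequality, which is a hypothesis of \cite[Theorem~5.3]{hare2004identifying}. The paper's proof leaves this hypothesis implicit, since it is automatic in the convex case by exactly your argument.
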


\begin{proof}
By Proposition~\ref{prop:shadow-subgradient-certificates}, for every $j$ there
exists
\[
        v_F(w^j)\in\partial g(\hat x^j)
\]
such that
\[
        v_F(w^j)\to v^\star:=A^\top y^\star-\nabla f(x^\star).
\]
Moreover, \eqref{eq:state-shadow-convergence} gives
\[
        \hat x^j\to x^\star.
\]
By Assumption~\ref{ass:shadow-identification}(ii),
\[
        v^\star\in\operatorname{ri}\partial g(x^\star).
\]
Thus the graph points
\[
        (\hat x^j,v_F(w^j))\in\operatorname{gph}\partial g
\]
converge to
\[
        (x^\star,v^\star),
        \qquad
        v^\star\in\operatorname{ri}\partial g(x^\star).
\]
The standard identification theorem for partly smooth convex functions
\cite{lewis2002active,hare2004identifying} then implies that
\[
        \hat x^j\in\mathcal M_g(x^\star)
\]
for all sufficiently large $j$.
\end{proof}

\begin{remark}[Application to reflected and restarted Halpern iterates]
\label{rem:identification-applies-to-halpern}
Theorem~\ref{thm:finite-identification-shadow-sequence} is intentionally stated
for an arbitrary sequence $\{w^j\}$. For the reflected Halpern schemes of
Sections~2 and~3, the convergence condition
\eqref{eq:state-shadow-convergence} is supplied by the global convergence
results for the corresponding nonexpansive reflected maps. For restarted
schemes, the same theorem applies to any globally indexed restarted trajectory
whenever the state and shadow sequences converge to the same KKT point. The
conclusion is always identification of the shadow points $\hat x^j$, not
identification of the Halpern states $x^j$.
\end{remark}

\begin{remark}[The Halpern state need not identify in finite time]
\label{rem:halpern-state-need-not-identify}
Finite identification of the Halpern state $x^j$ is false in general. Consider
the one-dimensional problem with no linear constraint,
\[
        f\equiv 0,
        \qquad
        g=\delta_{\{0\}}.
\]
The active manifold is $\mathcal M_g=\{0\}$, and the proximal map satisfies
$F(x)=0$ for every $x$. For the ordinary Halpern choice $\gamma=0$,
\[
        \hat x^k=0,
        \qquad
        \bar x^k=0,
        \qquad
        x^{k+1}=\frac{1}{k+2}x^0.
\]
Thus $\hat x^k\in\mathcal M_g$ for every $k$, but if $x^0\neq0$, then
$x^k\notin\mathcal M_g$ for every finite $k$. This shows that the shadow
sequence is the correct identification object for Halpern-type methods.
\end{remark}

\subsection{Reduced Residuals on the Identified Affine Face}
\label{sec:reduced_geometry_after_identification}

Partial smoothness yields finite identification of a smooth active manifold. To
obtain an exact reduced residual identity, we impose the following stronger
local model: the identified manifold is locally an affine face and the
subdifferential has a full normal-space fiber on that face. This is a sufficient
model for the exact identity below, not a standing assumption for all structured
nonsmooth terms.

\begin{assumption}[Affine-face model]
\label{ass:affine-face-model}
Let $w^\star=(x^\star,y^\star)\in W^\star$. Assume that there exist an affine
set $\mathcal F\subset\mathbb R^n$ with $x^\star\in\mathcal F$, a neighborhood
$U_x$ of $x^\star$, a symmetric matrix $H_{\mathcal F}\succeq0$, and a vector
$c_{\mathcal F}\in\mathbb R^n$ such that, writing
\[
        \mathcal L_{\mathcal F}:=\operatorname{span}(\mathcal F-\mathcal F),
        \qquad
        \Pi_{\mathcal F}:\mathbb R^n\to\mathcal L_{\mathcal F}
\]
for the tangent space and its orthogonal projector, one has
\[
        \partial g(x)
        =
        H_{\mathcal F}x+c_{\mathcal F}+\mathcal L_{\mathcal F}^{\perp},
        \qquad
        \forall x\in\mathcal F\cap U_x.
\]
\end{assumption}

\begin{remark}[Role of the affine-face model]
\label{rem:role-affine-face-model}
Assumption~\ref{ass:affine-face-model} is satisfied, for example, by functions
of the form \(g(x)=\frac12\langle x,Hx\rangle+\langle c,x\rangle+\delta_{\mathcal F}(x)\)
on an affine set \(\mathcal F\), and more generally by piecewise
linear-quadratic terms on a fixed affine region whose subdifferential fiber is
an affine translate of the full normal space. The assumption is stronger than
partial smoothness and is not intended to cover all polyhedral or piecewise
linear-quadratic models. It is used to obtain the exact reduced residual
identity below. For box constraints and more general polyhedral constraints,
the sharpness verification in Section~\ref{sec:sharpness_verification}
proceeds through metric subregularity or Hoffman--Robinson error bounds rather
than through this exact identity.
\end{remark}

For $u\in\mathcal L_{\mathcal F}$, define the affine embedding
\[
        \Psi(u,y):=(x^\star+u,y),
\]
and, for $(u,y)\in\mathcal L_{\mathcal F}\times\mathbb R^m$, define the
reduced KKT mapping
\begin{equation}
\label{eq:reduced-kkt-mapping}
        \mathcal T_{\rm red}(u,y)
        :=
        \begin{pmatrix}
        \begin{aligned}
        \Pi_{\mathcal F}\Big(&
        \nabla f(x^\star+u)+H_{\mathcal F}(x^\star+u)+c_{\mathcal F}\\
        &{}-A^\top y+\sigma A^\top A u
        \Big)
        \end{aligned}\\[1mm]
        Au
        \end{pmatrix}.
\end{equation}
Here we used $Ax^\star=b$, which gives
\[
        A(x^\star+u)-b=Au.
\]

Under the affine-face model, the full augmented KKT residual at points on the
face has an exact reduced representation.

\begin{theorem}
\label{thm:reduced-residual-identity-affine-face}
Suppose Assumption~\ref{ass:affine-face-model} holds. Then there exists a
neighborhood $U_{\rm red}$ of
$z^\star:=(0,y^\star)$ in
$\mathcal L_{\mathcal F}\times\mathbb R^m$ such that, for every
$z=(u,y)\in U_{\rm red}$,
\begin{equation}
\label{eq:exact-reduced-residual-identity}
        \operatorname{dist}\bigl(0,\mathcal T(\Psi(z))\bigr)
        =
        \|\mathcal T_{\rm red}(z)\|.
\end{equation}
\end{theorem}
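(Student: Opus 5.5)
Since $\Psi(z)=(x^\star+u,y)$ with $u\in\mathcal L_{\mathcal F}$, the point $x:=x^\star+u$ lies in $\mathcal F$. The plan is to choose $U_{\rm red}$ small enough that $x$ also lies in $U_x$, compute $\mathcal T(\Psi(z))$ explicitly from Assumption~\ref{ass:affine-face-model}, and then evaluate the distance to the origin as a projection onto an affine set. Concretely, I would set $U_{\rm red}:=\{(u,y)\in\mathcal L_{\mathcal F}\times\mathbb R^m:\ x^\star+u\in U_x\}$. This set is open in $\mathcal L_{\mathcal F}\times\mathbb R^m$ and contains $z^\star=(0,y^\star)$. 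It imposes no restriction on $y$, which is harmless. For $z\in U_{\rm red}$ we have $x\in\mathcal F\cap U_x$, so the assumption gives $\partial g(x)=H_{\mathcal F}x+c_{\mathcal F}+\mathcal L_{\mathcal F}^\perp$.

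Substituting into \eqref{T_AL} with $\partial\Phi=\nabla f+\partial g$, and using $Ax-b=Au$ (from $Ax^\star=b$), gives
\[
\mathcal T(\Psi(z))=\bigl\{(a(z)+n,\;Au):\ n\in\mathcal L_{\mathcal F}^\perp\bigr\},
\]
where
\[
a(z):=\nabla f(x)+H_{\mathcal F}x+c_{\mathcal F}-A^\top y+\sigma A^\top Au.
\]
Hence
\[
\operatorname{dist}\bigl(0,\mathcal T(\Psi(z))\bigr)^2=\inf_{n\in\mathcal L_{\mathcal F}^\perp}\|a(z)+n\|^2+\|Au\|^2 .
\]

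The remaining step is the projection computation. Decompose $a(z)=\Pi_{\mathcal F}a(z)+(I-\Pi_{\mathcal F})a(z)$. The second term lies in $\mathcal L_{\mathcal F}^\perp$, so by the Pythagorean identity
\[
\|a(z)+n\|^2=\|\Pi_{\mathcal F}a(z)\|^2+\|(I-\Pi_{\mathcal F})a(z)+n\|^2 .
\]
The infimum over $n$ is attained at $n=-(I-\Pi_{\mathcal F})a(z)$ and equals $\|\Pi_{\mathcal F}a(z)\|^2$. Therefore
\[
\operatorname{dist}\bigl(0,\mathcal T(\Psi(z))\bigr)^2=\|\Pi_{\mathcal F}a(z)\|^2+\|Au\|^2=\|\mathcal T_{\rm red}(z)\|^2
\]
by \eqref{eq:reduced-kkt-mapping}, where the product norm on $\mathcal L_{\mathcal F}\times\mathbb R^m$ is taken to be Euclidean. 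Taking square roots gives \eqref{eq:exact-reduced-residual-identity}.

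None of these steps is deep. The only point needing care is that the affine-face description of $\partial g$ holds only on $\mathcal F\cap U_x$, which is why the neighborhood is taken in $u$ alone. One also has to record that the augmentation term $\sigma A^\top(Ax-b)=\sigma A^\top Au$ sits inside $a(z)$, so it is projected by $\Pi_{\mathcal F}$ together with the other terms. This matches the definition of $\mathcal T_{\rm red}$, where $\sigma A^\top Au$ appears inside $\Pi_{\mathcal F}(\cdot)$.
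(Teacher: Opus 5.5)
Your proposal is correct and follows the paper's proof essentially step for step. You restrict to $z$ with $x^\star+u\in\mathcal F\cap U_x$, write $\mathcal T(\Psi(z))=(a(z)+\mathcal L_{\mathcal F}^\perp)\times\{Au\}$, and use the orthogonal splitting of $a(z)$ to get $\operatorname{dist}(0,a(z)+\mathcal L_{\mathcal F}^\perp)=\|\Pi_{\mathcal F}a(z)\|$. Your explicit choice $U_{\rm red}=\{(u,y):x^\star+u\in U_x\}$ is a neighborhood of $z^\star$ (open when $U_x$ is taken open), which makes the paper's ``after shrinking $U_{\rm red}$'' step concrete.
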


\begin{proof}
By Assumption~\ref{ass:affine-face-model}, after shrinking $U_{\rm red}$ if
necessary, every $z=(u,y)\in U_{\rm red}$ satisfies
\(x:=x^\star+u\in\mathcal F\cap U_x\). Define
\[
        a(z)
        :=
        \nabla f(x^\star+u)
        +H_{\mathcal F}(x^\star+u)
        +c_{\mathcal F}
        -A^\top y
        +\sigma A^\top A u.
\]
Since $x=x^\star+u\in\mathcal F$, the affine-face representation gives
\[
        \mathcal T(\Psi(z))
        =
        \begin{pmatrix}
        a(z)+\mathcal L_{\mathcal F}^{\perp}\\
        Au
        \end{pmatrix},
        \qquad
        \mathcal T_{\rm red}(z)
        =
        \begin{pmatrix}
        \Pi_{\mathcal F}a(z)\\
        Au
        \end{pmatrix}.
\]
For any $n\in\mathcal L_{\mathcal F}^{\perp}$,
\[
        \|a(z)+n\|^2
        =
        \|\Pi_{\mathcal F}a(z)\|^2
        +\|(I-\Pi_{\mathcal F})a(z)+n\|^2
        \ge
        \|\Pi_{\mathcal F}a(z)\|^2.
\]
Equality is attained at
\(n=-(I-\Pi_{\mathcal F})a(z)\in\mathcal L_{\mathcal F}^{\perp}\). Therefore
\(\operatorname{dist}(0,a(z)+\mathcal L_{\mathcal F}^{\perp})
=\|\Pi_{\mathcal F}a(z)\|\).
Combining this identity with the product structure of $\mathcal T(\Psi(z))$ yields
\[
        \operatorname{dist}\bigl(0,\mathcal T(\Psi(z))\bigr)^2
        =
        \|\Pi_{\mathcal F}a(z)\|^2+
        \|Au\|^2
        =
        \|\mathcal T_{\rm red}(z)\|^2,
\]
which proves \eqref{eq:exact-reduced-residual-identity}.
\end{proof}

\begin{corollary}
\label{cor:reduced-error-bound-transfer}
Let $U_{\rm red}$ be as in
Theorem~\ref{thm:reduced-residual-identity-affine-face}, and set
\[
        Z_{\rm red}:=U_{\rm red}\cap \mathcal T_{\rm red}^{-1}(0).
\]
Then $z^\star\in Z_{\rm red}$, and therefore $Z_{\rm red}$ is nonempty. Moreover,
\[
        z\in Z_{\rm red}
        \quad\Longrightarrow\quad
        \Psi(z)\in W^\star,
\]
and
\begin{equation}
\label{eq:embedding-distance-bound}
        \operatorname{dist}\bigl(\Psi(z),W^\star\bigr)
        \le
        \operatorname{dist}\bigl(z,Z_{\rm red}\bigr),
        \qquad
        \forall z\in U_{\rm red}.
\end{equation}
Consequently, if there exist a neighborhood $V_{\rm red}\subset U_{\rm red}$
of $z^\star$ in $\mathcal L_{\mathcal F}\times\mathbb R^m$ and a constant
$\alpha_{\rm red}>0$ such that
\begin{equation}
\label{eq:reduced-error-bound}
        \alpha_{\rm red}\operatorname{dist}(z,Z_{\rm red})
        \le
        \|\mathcal T_{\rm red}(z)\|,
        \qquad
        \forall z\in V_{\rm red},
\end{equation}
then
\[
        \alpha_{\rm red}\operatorname{dist}(w,W^\star)
        \le
        \operatorname{dist}(0,\mathcal T(w)),
        \qquad
        \forall w\in\Psi(V_{\rm red}).
\]
\end{corollary}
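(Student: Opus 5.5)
The plan is to prove the three claims in order, each using only Theorem~\ref{thm:reduced-residual-identity-affine-face} and the definitions of $\Psi$ and $W^\star$.

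\emph{Step 1: $z^\star\in Z_{\rm red}$.} At $z^\star=(0,y^\star)$ the second block of $\mathcal T_{\rm red}$ is $A\cdot0=0$. For the first block, note that $w^\star\in W^\star=\mathcal T^{-1}(0)$ and $x^\star\in\mathcal F\cap U_x$. The affine-face representation then gives
\[
0\in \nabla f(x^\star)+H_{\mathcal F}x^\star+c_{\mathcal F}+\mathcal L_{\mathcal F}^\perp-A^\top y^\star .
\]
Hence the vector $a(z^\star)$ lies in $\mathcal L_{\mathcal F}^\perp$, and its projection $\Pi_{\mathcal F}a(z^\star)$ vanishes. Since $z^\star\in U_{\rm red}$ by construction, we get $z^\star\in Z_{\rm red}$.

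\emph{Step 2: $z\in Z_{\rm red}$ implies $\Psi(z)\in W^\star$.} If $z\in Z_{\rm red}$, then $z\in U_{\rm red}$. The identity \eqref{eq:exact-reduced-residual-identity} gives $\operatorname{dist}(0,\mathcal T(\Psi(z)))=\|\mathcal T_{\rm red}(z)\|=0$. The set $\mathcal T(\Psi(z))$ is of the form $(a(z)+\mathcal L_{\mathcal F}^\perp)\times\{Au\}$, which is closed, so the distance is attained and $0\in\mathcal T(\Psi(z))$. By Proposition~\ref{prop:augmented-term-preserves-kkt-set}, this means $\Psi(z)\in W^\star$.

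\emph{Step 3: the distance bound \eqref{eq:embedding-distance-bound}.} The map $\Psi$ is an isometry onto its image, since $\|\Psi(z)-\Psi(z')\|=\|z-z'\|$. For $z\in U_{\rm red}$ and any $z'\in Z_{\rm red}$, Step~2 gives
\[
\operatorname{dist}(\Psi(z),W^\star)\le\|\Psi(z)-\Psi(z')\|=\|z-z'\|.
\]
Taking the infimum over $z'\in Z_{\rm red}$ yields \eqref{eq:embedding-distance-bound}; the set $Z_{\rm red}$ is nonempty by Step~1.

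\emph{Step 4: transfer of the error bound.} Let $w=\Psi(z)$ with $z\in V_{\rm red}\subset U_{\rm red}$. Chaining the three available bounds gives
\[
\alpha_{\rm red}\operatorname{dist}(w,W^\star)\le\alpha_{\rm red}\operatorname{dist}(z,Z_{\rm red})\le\|\mathcal T_{\rm red}(z)\|=\operatorname{dist}(0,\mathcal T(w)).
\]
The first inequality is Step~3, the second is \eqref{eq:reduced-error-bound}, and the equality is \eqref{eq:exact-reduced-residual-identity}.

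The only point needing care is Step~1. Membership of $z^\star$ in $U_{\rm red}$ holds because $U_{\rm red}$ was chosen as a neighborhood of $z^\star$. The affine-face description applies at $x^\star$ because $x^\star\in\mathcal F\cap U_x$. Everything else is bookkeeping.
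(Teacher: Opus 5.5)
Your proposal is correct and follows the paper's proof essentially step for step. It has the same check that $a(z^\star)\in\mathcal L_{\mathcal F}^\perp$, the same isometry of $\Psi$ with an infimum over $Z_{\rm red}$, and the same three-link chain for the error-bound transfer, differing only in Step~2, where you get $0\in\mathcal T(\Psi(z))$ from the residual identity plus closedness of $\mathcal T(\Psi(z))$ while the paper reads it off directly from $\Pi_{\mathcal F}a(z)=0$ and $Au=0$; both routes are valid.
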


\begin{proof}
For $z=(u,y)\in U_{\rm red}$, set
\[
        a(z):=
        \nabla f(x^\star+u)
        +H_{\mathcal F}(x^\star+u)
        +c_{\mathcal F}
        -A^\top y
        +\sigma A^\top A u.
\]
Because $w^\star\in W^\star$ and
$x^\star\in\mathcal F\cap U_x$, the affine-face model and the KKT condition
imply
\[
        a(z^\star)
        =
        \nabla f(x^\star)+H_{\mathcal F}x^\star+c_{\mathcal F}
        -A^\top y^\star
        \in\mathcal L_{\mathcal F}^{\perp}.
\]
The second component of $\mathcal T_{\rm red}(z^\star)$ is also zero. Thus
$\mathcal T_{\rm red}(z^\star)=0$, and $z^\star\in Z_{\rm red}$.

Suppose $z\in Z_{\rm red}$. Then
\[
        \Pi_{\mathcal F}a(z)=0,
        \qquad
        Au=0.
\]
The first condition implies $a(z)\in\mathcal L_{\mathcal F}^{\perp}$, and therefore
\[
        0\in a(z)+\mathcal L_{\mathcal F}^{\perp}.
\]
Together with $Au=0$, this gives $0\in \mathcal T(\Psi(z))$, and therefore
$\Psi(z)\in W^\star$.

Because $\Psi$ is an isometry from
$\mathcal L_{\mathcal F}\times\mathbb R^m$ into
$\mathbb R^n\times\mathbb R^m$, taking the infimum over $\bar z\in Z_{\rm red}$
gives
\[
        \operatorname{dist}\bigl(\Psi(z),W^\star\bigr)
        \le
        \operatorname{dist}\bigl(z,Z_{\rm red}\bigr),
\]
which proves \eqref{eq:embedding-distance-bound}.

Finally, if \eqref{eq:reduced-error-bound} holds and $w=\Psi(z)$ with
$z\in V_{\rm red}$, then by
\eqref{eq:embedding-distance-bound}, \eqref{eq:reduced-error-bound}, and
\eqref{eq:exact-reduced-residual-identity},
\[
        \alpha_{\rm red}\operatorname{dist}(w,W^\star)
        \le
        \alpha_{\rm red}\operatorname{dist}(z,Z_{\rm red})
        \le
        \|\mathcal T_{\rm red}(z)\|
        =
        \operatorname{dist}(0,\mathcal T(w)).
\]
This proves the claimed face-restricted error bound.
\end{proof}

The reduced error bound \eqref{eq:reduced-error-bound} may hold even when the
local solution set is not isolated. The following Jacobian condition is a simple
sufficient condition for the isolated case.

\begin{proposition}
\label{prop:reduced-jacobian-sufficient-condition}
Suppose Assumption~\ref{ass:affine-face-model} holds and $f$ is twice
continuously differentiable near $x^\star$. Then $\mathcal T_{\rm red}$ is
continuously differentiable near $z^\star=(0,y^\star)$, with Jacobian
\[
        D\mathcal T_{\rm red}(z^\star)
        =
        \begin{pmatrix}
        \Pi_{\mathcal F}
        \bigl(\nabla^2 f(x^\star)+H_{\mathcal F}
        +\sigma A^\top A\bigr)\Pi_{\mathcal F}
        &
        -\Pi_{\mathcal F}A^\top\\[1mm]
        A\Pi_{\mathcal F}
        &
        0
        \end{pmatrix},
\]
viewed as a linear operator on
$\mathcal L_{\mathcal F}\times\mathbb R^m$. If this operator is nonsingular,
then there exist a neighborhood $V_{\rm red}$ of $z^\star$ in
$\mathcal L_{\mathcal F}\times\mathbb R^m$ and a constant $\alpha_{\rm red}>0$
such that
\[
        \alpha_{\rm red}\|z-z^\star\|
        \le
        \|\mathcal T_{\rm red}(z)\|,
        \qquad
        \forall z\in V_{\rm red}.
\]
In particular, after possibly shrinking $V_{\rm red}$,
$\mathcal T_{\rm red}$ satisfies \eqref{eq:reduced-error-bound} with
$Z_{\rm red}=\{z^\star\}$.
\end{proposition}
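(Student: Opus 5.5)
The plan is to prove differentiability and compute the Jacobian by direct differentiation, and then get the local error bound from a first-order Taylor argument that uses nonsingularity of the derivative.

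\emph{Step 1: differentiability.} On $\mathcal L_{\mathcal F}\times\mathbb R^m$ the map $\mathcal T_{\rm red}$ is built from three kinds of pieces: the composition $u\mapsto\nabla f(x^\star+u)$, affine terms in $(u,y)$, and the fixed linear projector $\Pi_{\mathcal F}$. Since $f$ is $C^2$ near $x^\star$, $\nabla f$ is $C^1$ there. Hence $\mathcal T_{\rm red}$ is $C^1$ on a neighborhood of $z^\star=(0,y^\star)$.

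\emph{Step 2: the Jacobian.} Take a direction $(\delta u,\delta y)\in\mathcal L_{\mathcal F}\times\mathbb R^m$. The derivative of the first component is
\[
\Pi_{\mathcal F}\bigl((\nabla^2 f(x^\star)+H_{\mathcal F}+\sigma A^\top A)\,\delta u-A^\top\delta y\bigr),
\]
and the derivative of the second component is $A\,\delta u$. Because $\delta u\in\mathcal L_{\mathcal F}$, we have $\delta u=\Pi_{\mathcal F}\delta u$. Inserting this projector on the right gives exactly the displayed block form. Evaluating at $z^\star$ gives $D\mathcal T_{\rm red}(z^\star)$. I view the first block component as mapping into $\mathcal L_{\mathcal F}$, so that $D\mathcal T_{\rm red}(z^\star)$ is an endomorphism of the finite-dimensional space $\mathcal L_{\mathcal F}\times\mathbb R^m$. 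This is what makes "nonsingular" meaningful.

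\emph{Step 3: the error bound.} Set $J:=D\mathcal T_{\rm red}(z^\star)$ and assume it is invertible, with $\kappa:=\|J^{-1}\|^{-1}>0$. From Corollary~\ref{cor:reduced-error-bound-transfer} we have $\mathcal T_{\rm red}(z^\star)=0$. Continuity of $D\mathcal T_{\rm red}$ gives a ball $V_{\rm red}$ around $z^\star$ on which $\|D\mathcal T_{\rm red}(\zeta)-J\|\le\kappa/2$. By the mean value inequality applied to $z\mapsto\mathcal T_{\rm red}(z)-J(z-z^\star)$ on this convex ball,
\[
\|\mathcal T_{\rm red}(z)-J(z-z^\star)\|\le\tfrac{\kappa}{2}\|z-z^\star\|.
\]
Since $\|J(z-z^\star)\|\ge\kappa\|z-z^\star\|$, it follows that $\|\mathcal T_{\rm red}(z)\|\ge\tfrac{\kappa}{2}\|z-z^\star\|$. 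So we may take $\alpha_{\rm red}=\kappa/2$.

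\emph{Step 4: the final claim.} The same inequality shows that $z^\star$ is the only zero of $\mathcal T_{\rm red}$ in $V_{\rm red}$. Shrink $V_{\rm red}$ and $U_{\rm red}$, say to balls, so that $Z_{\rm red}\cap V_{\rm red}=\{z^\star\}$. I would then restate $Z_{\rm red}$ with the shrunken $U_{\rm red}$, so that $Z_{\rm red}=\{z^\star\}$ and $\operatorname{dist}(z,Z_{\rm red})=\|z-z^\star\|$. With that, \eqref{eq:reduced-error-bound} holds. Note that $U_{\rm red}$ may be shrunk without affecting Theorem~\ref{thm:reduced-residual-identity-affine-face}.

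The main obstacle is Step 4, that is, reconciling the shrinkage of $V_{\rm red}$ with the definition of $Z_{\rm red}=U_{\rm red}\cap\mathcal T_{\rm red}^{-1}(0)$. The argument needs to make explicit that $U_{\rm red}$ is shrunk as well. The calculus in Steps 1–3 is routine.
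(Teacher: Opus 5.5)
Your proof is correct and follows the paper's outline. Both arguments differentiate the explicit formula for $\mathcal T_{\rm red}$, check $\mathcal T_{\rm red}(z^\star)=0$, and turn nonsingularity of $J=D\mathcal T_{\rm red}(z^\star)$ into a local error bound.

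The one real difference is Step~3. The paper invokes the inverse function theorem and uses Lipschitz continuity of the local inverse. You instead apply the mean value inequality to $z\mapsto\mathcal T_{\rm red}(z)-J(z-z^\star)$ on a ball where $\|D\mathcal T_{\rm red}-J\|\le\kappa/2$. Your route is the quantitative core of the inverse function theorem, so it is more elementary. It also gives explicit constants: $\alpha_{\rm red}=1/(2\|J^{-1}\|)$, with $V_{\rm red}$ determined by the modulus of continuity of $D\mathcal T_{\rm red}$. The inverse function theorem delivers local invertibility, which is more than the statement needs.

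On Step~4, you are right that the literal identity $Z_{\rm red}=\{z^\star\}$ requires shrinking $U_{\rm red}$, not only $V_{\rm red}$. The paper's proof does not address this clause. It is not really an obstacle for the error bound itself, though. Since $z^\star\in Z_{\rm red}$ for every admissible $U_{\rm red}$, one has $\operatorname{dist}(z,Z_{\rm red})\le\|z-z^\star\|$. Hence \eqref{eq:reduced-error-bound} follows directly from your displayed estimate, with no shrinking at all.
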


\begin{proof}
The formula \eqref{eq:reduced-kkt-mapping} shows that $\mathcal T_{\rm red}$ is
continuously differentiable near $z^\star$ whenever $f$ is $C^2$ near
$x^\star$. Differentiating the two components gives the displayed Jacobian.

Because $w^\star\in W^\star$ and Assumption~\ref{ass:affine-face-model}
holds at $x^\star$, the vector
\[
        \nabla f(x^\star)+H_{\mathcal F}x^\star+c_{\mathcal F}
        -A^\top y^\star
\]
belongs to $\mathcal L_{\mathcal F}^{\perp}$, and $Ax^\star=b$. Thus
$\mathcal T_{\rm red}(z^\star)=0$.

If $D\mathcal T_{\rm red}(z^\star)$ is nonsingular, then by the inverse
function theorem $\mathcal T_{\rm red}$ is locally invertible around
$z^\star$, and its local inverse is Lipschitz. Since
$\mathcal T_{\rm red}(z^\star)=0$, there exist $V_{\rm red}$ and
$c_{\rm red}>0$ such that
\[
        \|z-z^\star\|
        \le
        c_{\rm red}\|\mathcal T_{\rm red}(z)-\mathcal T_{\rm red}(z^\star)\|
        =
        c_{\rm red}\|\mathcal T_{\rm red}(z)\|,
        \qquad
        \forall z\in V_{\rm red}.
\]
Taking $\alpha_{\rm red}=1/c_{\rm red}$ gives the claimed local error bound.
\end{proof}

Combining finite identification with the reduced residual identity gives the
following consequence for the identified shadow points.

\begin{corollary}
\label{cor:identified-shadow-reduced-residuals}
Let $F\in\{\widehat{\mathcal T},\widehat{\mathcal T}_{\rm lin}\}$, and let
$\hat w^j=F(w^j)$ be a shadow sequence satisfying the assumptions of
Theorem~\ref{thm:finite-identification-shadow-sequence}. Suppose that
Assumption~\ref{ass:affine-face-model} holds and that the identified manifold
locally coincides with the affine set $\mathcal F$. Then there exists
$J_{\rm red}\ge0$ such that, for all $j\ge J_{\rm red}$,
\[
        \hat x^j\in\mathcal F,
        \qquad
        \hat z^j:=(\hat x^j-x^\star,\hat y^j)
        \in\mathcal L_{\mathcal F}\times\mathbb R^m,
\]
and
\begin{equation}
\label{eq:identified-shadow-reduced-residual-identity}
        \operatorname{dist}\bigl(0,\mathcal T(\hat w^j)\bigr)
        =
        \|\mathcal T_{\rm red}(\hat z^j)\|,
        \qquad
        \forall j\ge J_{\rm red}.
\end{equation}
\end{corollary}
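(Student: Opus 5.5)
The plan is to combine Theorem~\ref{thm:finite-identification-shadow-sequence} with Theorem~\ref{thm:reduced-residual-identity-affine-face}, using the convergence $\hat w^j\to w^\star$ to place the shadow points inside the neighborhood $U_{\rm red}$ where the exact identity holds.

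\textbf{Step 1 (identification).} Since the hypotheses of Theorem~\ref{thm:finite-identification-shadow-sequence} hold, there is $J_{\rm id}$ such that $\hat x^j\in\mathcal M_g(x^\star)$ for all $j\ge J_{\rm id}$. By hypothesis, the identified manifold locally coincides with $\mathcal F$. We make this precise as follows: there is a neighborhood $U'$ of $x^\star$ with
\[
\mathcal M_g(x^\star)\cap U'=\mathcal F\cap U'.
\]
Because $\hat x^j\to x^\star$, there is $J_1$ with $\hat x^j\in U'\cap U_x$ for $j\ge J_1$. Hence $\hat x^j\in\mathcal F$ for $j\ge\max(J_{\rm id},J_1)$.

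\textbf{Step 2 (reduced coordinates).} Since $x^\star\in\mathcal F$ and $\mathcal F$ is affine, $\hat x^j-x^\star\in\mathcal F-\mathcal F\subset\mathcal L_{\mathcal F}$. Therefore
\[
\hat z^j=(\hat x^j-x^\star,\hat y^j)\in\mathcal L_{\mathcal F}\times\mathbb R^m,
\qquad \Psi(\hat z^j)=\hat w^j.
\]

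\textbf{Step 3 (localization and conclusion).} Since $\hat w^j\to w^\star$, we have $\hat z^j\to z^\star=(0,y^\star)$ in $\mathcal L_{\mathcal F}\times\mathbb R^m$. Because $U_{\rm red}$ is a neighborhood of $z^\star$ relative to this subspace, there is $J_2$ with $\hat z^j\in U_{\rm red}$ for $j\ge J_2$. Set $J_{\rm red}:=\max(J_{\rm id},J_1,J_2)$. Applying \eqref{eq:exact-reduced-residual-identity} at $z=\hat z^j$ gives \eqref{eq:identified-shadow-reduced-residual-identity}.

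\textbf{Main obstacle.} The only delicate point is interpreting ``locally coincides''. It must hold on a full neighborhood, so that convergence of $\hat x^j$ forces the manifold points into $\mathcal F$. It must also be compatible with the neighborhood $U_x$ of Assumption~\ref{ass:affine-face-model}, which is used in the construction of $U_{\rm red}$. Both requirements are handled by intersecting the finitely many neighborhoods above.
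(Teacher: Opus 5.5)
Your proposal is correct and follows essentially the same route as the paper. Finite identification together with the local coincidence of the manifold and $\mathcal F$ puts $\hat x^j$ in $\mathcal F$, the convergence $\hat w^j\to w^\star$ eventually puts $\hat z^j$ in $U_{\rm red}$, and Theorem~\ref{thm:reduced-residual-identity-affine-face} applied at $\hat z^j$ with $\Psi(\hat z^j)=\hat w^j$ gives the identity; your explicit neighborhood $U'$ only spells out what the paper leaves implicit in ``locally coincides''.
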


\begin{proof}
Finite identification gives $\hat x^j\in\mathcal F$ for all sufficiently
large $j$. Since $\hat w^j\to w^\star$, increasing $J_{\rm red}$ if
necessary also gives $\hat z^j\in U_{\rm red}$ for all $j\ge J_{\rm red}$.
Therefore $\hat w^j=\Psi(\hat z^j)$ for all large $j$, and
\eqref{eq:identified-shadow-reduced-residual-identity} follows from
Theorem~\ref{thm:reduced-residual-identity-affine-face}.
\end{proof}

\begin{remark}[Residual control by fixed-point steps]
The same inclusions give a direct residual control by the fixed-point step. In
the exact dPPM case $F=\widehat{\mathcal T}$,
\[
        \hat w^j=(\mathcal M+\mathcal T)^{-1}\mathcal M w^j
\]
implies
\[
        \mathcal M(w^j-\hat w^j)\in \mathcal T(\hat w^j).
\]
Therefore
\[
        \|\mathcal T_{\rm red}(\hat z^j)\|
        =
        \operatorname{dist}\bigl(0,\mathcal T(\hat w^j)\bigr)
        \le
        \|\mathcal M(w^j-\hat w^j)\|
        \le
        \|\mathcal M\|\,\|w^j-\hat w^j\|,
        \qquad j\ge J_{\rm red}.
\]
In the linearized PFBS case $F=\widehat{\mathcal T}_{\rm lin}$,
\[
        \hat w^j
        =(\mathcal M+\mathcal A_h)^{-1}(\mathcal M-\mathcal B_h)w^j
\]
implies
\[
        \mathcal M(w^j-\hat w^j)
        +\mathcal B_h(\hat w^j)-\mathcal B_h(w^j)
        \in
        (\mathcal A_h+\mathcal B_h)(\hat w^j)
        =\mathcal T(\hat w^j).
\]
Therefore
\[
        \|\mathcal T_{\rm red}(\hat z^j)\|
        =
        \operatorname{dist}\bigl(0,\mathcal T(\hat w^j)\bigr)
        \le
        \|\mathcal M(w^j-\hat w^j)
        +\mathcal B_h(\hat w^j)-\mathcal B_h(w^j)\|.
\]
Since $\mathcal B_h(x,y)=(\nabla h(x),0)$ and $\nabla h$ is $L_h$-Lipschitz,
\[
        \|\mathcal B_h(\hat w^j)-\mathcal B_h(w^j)\|
        \le
        L_h\|\hat x^j-x^j\|
        \le
        L_h\|\hat w^j-w^j\|.
\]
Consequently, for all $j\ge J_{\rm red}$,
\[
        \|\mathcal T_{\rm red}(\hat z^j)\|
        \le
        (\|\mathcal M\|+L_h)\|w^j-\hat w^j\|.
\]
\end{remark}

\subsection{Perturbed Reduction of the Halpern State}
\label{subsec:perturbed-reduced-halpern-dynamics}

The previous subsection gives exact reduced residual identities at identified
shadow points. The Halpern state itself is different: it may retain a normal
component relative to the identified affine face. The next proposition records
this distinction in a compact form. It is used only as a local structural
description; the restart analysis in Section~5 is formulated in terms of
fixed-point residuals.

Assume throughout this subsection that Assumption~\ref{ass:affine-face-model}
holds. Let $F\in\{\widehat{\mathcal T},\widehat{\mathcal T}_{\rm lin}\}$ and
let $\hat w^j=F(w^j)$, with $w^j\to w^\star$ and
$\hat w^j\to w^\star$. Suppose that the shadow sequence has already
identified $\mathcal F$ and that $\hat x^j\in\mathcal F$ for all
sufficiently large $j$. For such $j$, decompose the Halpern state as
\[
\begin{aligned}
        x^j&=x^\star+u^j+n^j,\\
        u^j&:=\Pi_{\mathcal F}(x^j-x^\star)
        \in\mathcal L_{\mathcal F},\\
        n^j&:=(I-\Pi_{\mathcal F})(x^j-x^\star)
        \in\mathcal L_{\mathcal F}^{\perp}.
\end{aligned}
\]
Since $\hat x^j\in\mathcal F$, write
\(\hat x^j=x^\star+\hat u^j\), with
\(\hat u^j\in\mathcal L_{\mathcal F}\). Define
\(z^j:=(u^j,y^j)\) and
\(\hat z^j:=(\hat u^j,\hat y^j)\), and introduce the reduced metric
operator
\[
        M_{\rm red}
        :=
        \begin{pmatrix}
        \Pi_{\mathcal F}P\Pi_{\mathcal F} & \Pi_{\mathcal F}A^\top\\[1mm]
        A\Pi_{\mathcal F} & Q
        \end{pmatrix}.
\]
Finally, define the normal defect operator
\[
        D_\perp n
        :=
        \begin{pmatrix}
        \Pi_{\mathcal F}Pn\\[1mm]
        An
        \end{pmatrix},
        \qquad
        n\in\mathcal L_{\mathcal F}^{\perp}.
\]
For the linearized PFBS map, also define the reduced forward operator
\[
        \mathcal B_{{\rm red},h}(u,y)
        :=
        \begin{pmatrix}
        \Pi_{\mathcal F}\nabla h(x^\star+u)\\[1mm]
        0
        \end{pmatrix},
\]
and the reduced backward operator
\[
        \mathcal A_{{\rm red},h}(u,y)
        :=
        \begin{pmatrix}
        \begin{aligned}
        \Pi_{\mathcal F}\Big(&
        \nabla f(x^\star+u)-\nabla h(x^\star+u)
        +H_{\mathcal F}(x^\star+u)\\
        &{}+c_{\mathcal F}-A^\top y+\sigma A^\top A u
        \Big)
        \end{aligned}\\[1mm]
        Au
        \end{pmatrix}.
\]
Then \(\mathcal T_{\rm red}=\mathcal A_{{\rm red},h}+\mathcal B_{{\rm red},h}\).

\begin{proposition}[Perturbed reduced state relations]
\label{prop:perturbed-reduced-state-relations}
For every sufficiently large $j$, the following statements hold.
\begin{enumerate}
\item[(i)] If $F=\widehat{\mathcal T}$, then
\begin{equation}
\label{eq:perturbed-reduced-exact}
        M_{\rm red}z^j
        =
        M_{\rm red}\hat z^j
        +
        \mathcal T_{\rm red}(\hat z^j)
        -
        D_\perp n^j.
\end{equation}
\item[(ii)] If $F=\widehat{\mathcal T}_{\rm lin}$, then
\begin{equation}
\label{eq:perturbed-reduced-linearized}
        M_{\rm red}z^j
        -\mathcal B_{{\rm red},h}(z^j)
        =
        M_{\rm red}\hat z^j
        +\mathcal A_{{\rm red},h}(\hat z^j)
        -D_\perp n^j
        +E_h^j,
\end{equation}
where
\[
        E_h^j
        :=
        \begin{pmatrix}
        \Pi_{\mathcal F}\bigl(
        \nabla h(x^\star+u^j+n^j)
        -\nabla h(x^\star+u^j)
        \bigr)\\[1mm]
        0
        \end{pmatrix}.
\]
If $\nabla h$ is locally Lipschitz near $x^\star$, then
\[
        \|E_h^j\|=O(\|n^j\|).
\]
\end{enumerate}
In both cases, when $n^j=0$ the normal perturbation vanishes and the projected
state relation becomes an autonomous reduced relation.
\end{proposition}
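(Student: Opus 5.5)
The plan is to project the componentwise resolvent inclusions that define $\hat w^j=F(w^j)$ onto $\mathcal L_{\mathcal F}\times\mathbb R^m$. The affine-face model then replaces $\partial g(\hat x^j)$ by an affine expression plus $\mathcal L_{\mathcal F}^\perp$, and $\Pi_{\mathcal F}$ annihilates the normal-space part.

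First fix $j$ large. Since $\hat w^j\to w^\star$ and the shadow sequence has identified $\mathcal F$, we have $\hat x^j\in\mathcal F\cap U_x$. Assumption~\ref{ass:affine-face-model} then gives
\[
\partial g(\hat x^j)=H_{\mathcal F}\hat x^j+c_{\mathcal F}+\mathcal L_{\mathcal F}^\perp .
\]
Securing this inclusion for all large $j$ is the only genuinely non-algebraic point. It is also the step I expect to need the most care: the neighborhood $U_x$ must be reached, and the identified manifold must coincide with $\mathcal F$ near $x^\star$, as in Corollary~\ref{cor:identified-shadow-reduced-residuals}. Everything after that is bookkeeping.

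For the exact case (i), I will use the two components written out in the proof of Proposition~\ref{prop:dppm_representation}:
\[
Px^j+A^\top y^j\in P\hat x^j+\nabla f(\hat x^j)+\partial g(\hat x^j)+\sigma A^\top(A\hat x^j-b),
\]
\[
Ax^j+Qy^j=2A\hat x^j+Q\hat y^j-b .
\]
Apply $\Pi_{\mathcal F}$ to the first relation; this removes the $\mathcal L_{\mathcal F}^\perp$ term. Substitute $x^j-\hat x^j=(u^j-\hat u^j)+n^j$ and $A\hat x^j-b=A\hat u^j$, the latter using $Ax^\star=b$. Then write $\Pi_{\mathcal F}P(u^j-\hat u^j)=\Pi_{\mathcal F}P\Pi_{\mathcal F}(u^j-\hat u^j)$, and add and subtract $\Pi_{\mathcal F}A^\top\hat y^j$. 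The first component of $M_{\rm red}(z^j-\hat z^j)$ then equals the first component of $\mathcal T_{\rm red}(\hat z^j)$ minus $\Pi_{\mathcal F}Pn^j$.

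For the dual relation, use $Ax^j=b+Au^j+An^j$ and $A\hat x^j=b+A\hat u^j$. This gives
\[
A\Pi_{\mathcal F}u^j+Qy^j=A\Pi_{\mathcal F}\hat u^j+Q\hat y^j+A\hat u^j-An^j ,
\]
which is the second component of \eqref{eq:perturbed-reduced-exact}, since the second component of $\mathcal T_{\rm red}(\hat z^j)$ is $A\hat u^j$.

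For the linearized case (ii), I will use the primal inclusion from the proof of Proposition~\ref{prop:alg_equiv_linearized}. Its left side is $Px^j+A^\top y^j-\nabla h(x^j)$, and its right side contains $\nabla f-\nabla h$ at $\hat x^j$. The dual equation is identical to the exact case. The same projection argument produces $\mathcal A_{{\rm red},h}(\hat z^j)$ on the right and leaves $-\Pi_{\mathcal F}\nabla h(x^\star+u^j+n^j)$ on the left. I then split
\[
\nabla h(x^\star+u^j+n^j)=\nabla h(x^\star+u^j)+\bigl(\nabla h(x^\star+u^j+n^j)-\nabla h(x^\star+u^j)\bigr).
\]
The first piece is the first component of $-\mathcal B_{{\rm red},h}(z^j)$. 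Moving the bracketed difference to the right yields $+E_h^j$.

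If $\nabla h$ is $L$-Lipschitz near $x^\star$, then $\|E_h^j\|\le L\|n^j\|$, because $x^\star+u^j$ and $x^\star+u^j+n^j$ both tend to $x^\star$ (since $w^j\to w^\star$). Finally, setting $n^j=0$ kills both $D_\perp n^j$ and $E_h^j$, which gives the autonomous reduced relation.
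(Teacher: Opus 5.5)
Your proposal is correct and follows essentially the same route as the paper. You project the primal resolvent (respectively PFBS) inclusion onto $\mathcal L_{\mathcal F}$ using the affine-face form of $\partial g(\hat x^j)$, substitute $x^j=x^\star+u^j+n^j$, $\hat x^j=x^\star+\hat u^j$ and $Ax^\star=b$ into both components, and in the linearized case add and subtract $\Pi_{\mathcal F}\nabla h(x^\star+u^j)$ to produce $E_h^j$. The one non-algebraic input, $\hat x^j\in\mathcal F\cap U_x$ for large $j$, is supplied by the subsection's standing identification hypothesis together with $\hat x^j\to x^\star$, exactly as you indicate.
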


\begin{proof}
Write $\Pi=\Pi_{\mathcal F}$ for readability. We consider only indices for
which $\hat x^j\in\mathcal F$.

For the exact map, the resolvent relation
\[
        \mathcal M w^j\in \mathcal M\hat w^j+\mathcal T(\hat w^j)
\]
gives the primal and dual shadow optimality conditions
\[
\begin{aligned}
0&\in \nabla f(\hat x^j)+\partial g(\hat x^j)
-A^\top y^j+\sigma A^\top(A\hat x^j-b)+P(\hat x^j-x^j),\\
Q(\hat y^j-y^j)&=b-A(2\hat x^j-x^j).
\end{aligned}
\]
Using the affine-face representation, substituting
$\hat x^j=x^\star+\hat u^j$ and
$x^j=x^\star+u^j+n^j$, and projecting the primal condition onto
$\mathcal L_{\mathcal F}$ gives
\[
\begin{aligned}
\Pi Pu^j+\Pi A^\top y^j
&=
\Pi P\hat u^j
+\Pi\Bigl(
\nabla f(\hat x^j)+H_{\mathcal F}\hat x^j+c_{\mathcal F}
+\sigma A^\top A\hat u^j
\Bigr)
-\Pi Pn^j,\\
Au^j+Qy^j
&=
2A\hat u^j+Q\hat y^j-An^j .
\end{aligned}
\]
By the definitions of $M_{\rm red}$, $\mathcal T_{\rm red}$, and $D_\perp$,
these two identities are exactly \eqref{eq:perturbed-reduced-exact}.

For the linearized map, the PFBS relation
\[
        (\mathcal M-\mathcal B_h)w^j
        \in
        (\mathcal M+\mathcal A_h)\hat w^j
\]
gives the same dual equation and the primal condition
\[
0\in
\partial g(\hat x^j)+\nabla f(\hat x^j)-\nabla h(\hat x^j)
+\nabla h(x^j)-A^\top y^j
+\sigma A^\top(A\hat x^j-b)+P(\hat x^j-x^j).
\]
Projecting this condition onto $\mathcal L_{\mathcal F}$, using the same dual
equation as above, and adding and subtracting $\Pi\nabla h(x^\star+u^j)$ gives
\[
\begin{aligned}
&\Pi Pu^j+\Pi A^\top y^j-\Pi\nabla h(x^\star+u^j)\\
&\quad =
\Pi P\hat u^j
+\Pi\bigl(\nabla f(\hat x^j)-\nabla h(\hat x^j)\bigr)\\
&\qquad
+\Pi\bigl(H_{\mathcal F}\hat x^j+c_{\mathcal F}
+\sigma A^\top A\hat u^j\bigr)
-\Pi Pn^j\\
&\qquad
+\Pi\bigl(\nabla h(x^\star+u^j+n^j)
-\nabla h(x^\star+u^j)\bigr),\\
&Au^j+Qy^j
=
2A\hat u^j+Q\hat y^j-An^j .
\end{aligned}
\]
This is precisely \eqref{eq:perturbed-reduced-linearized}. Since
$w^j\to w^\star$, both $x^\star+u^j+n^j$ and $x^\star+u^j$ lie in any fixed
neighborhood of $x^\star$ for all sufficiently large $j$. The local Lipschitz
continuity of $\nabla h$ therefore gives
\[
        \|E_h^j\|
        \le
        L\|n^j\|
\]
for some local Lipschitz constant $L$, proving $\|E_h^j\|=O(\|n^j\|)$.
\end{proof}

\begin{remark}[Normal component induced by reflection and anchoring]
\label{rem:normal-component-recurrence}
The perturbation in Proposition~\ref{prop:perturbed-reduced-state-relations}
is caused by the fact that the Halpern state $x^j$ need not lie on the
identified affine face, even after the shadow point $\hat x^j$ has
identified it.

To see this explicitly, consider one epoch of a restarted reflected Halpern
scheme, where \(n\) indexes the epoch and \(k\) the inner iteration. Let
\[
        \nu_{n,k}:=(I-\Pi_{\mathcal F})(x_{n,k}-x^\star)
\]
be the normal component of the Halpern state, and suppose that
$\hat x_{n,k}\in\mathcal F$. Then
\[
        (I-\Pi_{\mathcal F})(\hat x_{n,k}-x^\star)=0.
\]
The reflected point satisfies
\[
        \bar x_{n,k}
        =
        (1+\gamma)\hat x_{n,k}-\gamma x_{n,k},
\]
and therefore
\[
        \bar\nu_{n,k}
        :=
        (I-\Pi_{\mathcal F})(\bar x_{n,k}-x^\star)
        =
        -\gamma \nu_{n,k}.
\]
The Halpern update
\[
        x_{n,k+1}
        =
        \frac{1}{k+2}x_{n,0}
        +
        \frac{k+1}{k+2}\bar x_{n,k}
\]
gives
\[
        \nu_{n,k+1}
        =
        \frac{1}{k+2}\nu_{n,0}
        -
        \frac{\gamma(k+1)}{k+2}\nu_{n,k}.
\]
This linear recurrence also gives the decay rate of the normal drift
within such an identified epoch. Setting
\(s_{n,k}:=(k+1)\nu_{n,k}\), we obtain
\[
        s_{n,k+1}=\nu_{n,0}-\gamma s_{n,k}.
\]
Solving this recursion gives, for every fixed \(\gamma\in(-1,1]\),
\[
        s_{n,k}
        =
        \frac{1+\gamma(-\gamma)^k}{1+\gamma}\nu_{n,0},
        \qquad
        \nu_{n,k}
        =
        \frac{1+\gamma(-\gamma)^k}{(1+\gamma)(k+1)}\nu_{n,0}.
\]
Thus, throughout the reflected range \(\gamma\in(-1,1]\), the normal component
decays as \(O(1/k)\) along the epoch, although it need not vanish in finite
time. An identically zero normal sequence is obtained only under the
invariance condition that the epoch anchor lies on the affine face.
Consequently, the autonomous reduced dynamics without the perturbation term are
valid for the Halpern state only under the additional invariance condition
$n^j=0$. Without this condition, the correct late-stage description is the
perturbed reduced representation \eqref{eq:perturbed-reduced-exact} or
\eqref{eq:perturbed-reduced-linearized}.
\end{remark}

\begin{remark}[Scope of the affine-face model]
\label{rem:scope-affine-face-model}
Assumption~\ref{ass:affine-face-model} is stronger than partial smoothness or
piecewise linear-quadratic structure alone. It is appropriate when, on the
identified face, $g$ is locally a quadratic function plus the indicator of an
affine set, or more generally when the identified normal fiber coincides with
all of $\mathcal L_{\mathcal F}^{\perp}$. For generic $\ell_1$ regularizers,
nonnegativity constraints, box constraints, and simplex constraints, the fixed
normal fiber after identification is usually a cone or a strict subset of
$\mathcal L_{\mathcal F}^{\perp}$ rather than the whole normal space. In those
cases the tangential projected dynamics remain informative, but the exact
residual identity \eqref{eq:exact-reduced-residual-identity} should be replaced
by a polyhedral error-bound or metric-subregularity argument.
\end{remark}

The results in this section clarify the role of identification in reflected
Halpern and restarted reflected Halpern schemes. The finite identification
property belongs to the shadow sequence $\hat x^j=F(w^j)$, not to the
Halpern state $x^j$ itself. After identification, the KKT residual at the
shadow points is exactly represented by the reduced residual
$\|\mathcal T_{\rm red}(\hat z^j)\|$ on the tangent space of the active
affine face. This provides the local geometric mechanism needed for the
restart analysis in Section~5.

At the same time, the Halpern state may retain a nonzero normal component.
The resulting late-stage dynamics are therefore reduced only up to a normal
perturbation, unless the algorithm is initialized or restarted directly on the
identified face. This distinction is essential for a correct local theory of
reflected Halpern primal--dual methods.

\section{Restart Strategies under Fixed-Point Sharpness}
\label{sec5}

Restart converts sublinear residual decay into a linear epoch-wise contraction
when the fixed-point residual satisfies a sharpness condition on the set visited
by the restarted trajectory. We develop this conditional analysis for a generic
map
\[
        F\in\{\widehat{\mathcal T},\widehat{\mathcal T}_{\rm lin}\},
\]
covering both exact and linearized augmented primal--dual schemes. We prove
linear convergence of restart anchors under fixed-point sharpness on the visited
restart set, and then verify the required sharpness from KKT error bounds,
including the reduced error-bound mechanism on the identified face. Thus the
restart theorems are global only when sharpness holds on the full relevant
trajectory set. In the applications below, sharpness is obtained from local
error bounds, and the conclusions are local or tail linear convergence of
restart anchors after the trajectory has entered the corresponding neighborhood.

\subsection{Restarted Reflected Halpern Under Sharpness}
\label{sec:sharp_restart}

Fix one of these two maps, together with its associated
preconditioner $\mathcal M$, and let
\[
        \mathcal H:=\mathbb R^n\times\mathbb R^m,
        \qquad
        F\in\{\widehat{\mathcal T},\widehat{\mathcal T}_{\rm lin}\}.
\]
For both choices of $F$, the fixed-point set coincides with the KKT set
$W^\star=\mathcal T^{-1}(0)$ introduced in Section~\ref{sec4}. Throughout this
section we work in the standing KKT-nonempty case \(W^\star\neq\emptyset\).
The relaxation parameter
$\gamma$ is chosen from the admissible range for the selected map; the
reflected map
\[
        \mathcal S_\gamma:=(1+\gamma)F-\gamma I
\]
is $\mathcal M$-nonexpansive. Concretely, this means
$\gamma\in(-1,1]$ for the exact dPPM map and
$\gamma\in(-1,1-L_h/(2\mu_x))$ for the linearized PFBS algorithmic
statements. The endpoint $\gamma=1-L_h/(2\mu_x)$ remains available for the
linearized map-level nonexpansiveness estimate of
Theorem~\ref{thm:sec32_relaxation}.

For $w\in\mathcal H$, define the fixed-point residual
\[
        r_F(w):=\|w-F(w)\|_{\mathcal M}.
\]
Since
\[
        w-\mathcal S_\gamma(w)=(1+\gamma)(w-F(w)),
\]
one has
\begin{equation}
\label{eq:reflected_residual_relation}
        \|w-\mathcal S_\gamma(w)\|_{\mathcal M}
        =(1+\gamma)r_F(w).
\end{equation}
Thus the residual of the reflected map and the residual of $F$ are equivalent up
to the scalar $1+\gamma$.

\begin{definition}[Sharpness]
\label{def:sharpness}
Let $\mathcal K\subset\mathcal H$. We say that $F$ satisfies the
$\alpha$-sharpness condition on $\mathcal K$ if $\alpha>0$ and
\begin{equation}
\label{eq:sharpness_F}
        \alpha\operatorname{dist}_{\mathcal M}(w,W^\star)
        \le
        r_F(w),
        \qquad
        \forall w\in\mathcal K,
\end{equation}
where
\[
        \operatorname{dist}_{\mathcal M}(w,W^\star)
        :=
        \inf_{w^\star\in W^\star}\|w-w^\star\|_{\mathcal M}.
\]
\end{definition}
When $\mathcal M$ is singular, $\operatorname{dist}_{\mathcal M}$ is understood
as the semidistance induced by the $\mathcal M$-seminorm; positive definiteness
is imposed explicitly whenever a genuine distance is required.

\begin{remark}[Local versus global interpretation of restart guarantees]
\label{rem:restart-local-global}
The fixed-frequency and adaptive restart theorems below are conditional on the
sharpness inequality holding on the set of restart anchors, or more generally on
the set visited by the restarted trajectory. If this sharpness holds globally on
the relevant space, the resulting epoch contraction is global. If sharpness is
obtained only from local metric subregularity, partial smoothness, an identified
active structure, or a Hoffman--Robinson error bound in a neighborhood, then the
linear convergence conclusion is local or tail-only. In this local case, the
theory does not assert that an arbitrary initialization starts inside the
sharpness neighborhood; it applies after the iterates enter that neighborhood
and remain in the relevant set. Adaptive restart removes the need to know the
sharpness constant in advance, but it does not remove the assumption that the
relevant sharpness property holds on the visited set.
\end{remark}

The restarted reflected Halpern scheme is organized into epochs. Given the
anchor $w_{n,0}$ of epoch $n$, the inner iterates are
\begin{equation}
\label{eq:epoch_halpern}
        w_{n,k+1}
        =
        \frac{1}{k+2}w_{n,0}
        +
        \frac{k+1}{k+2}\mathcal S_\gamma(w_{n,k}),
        \qquad k\ge0.
\end{equation}
When an epoch stops at length $K_n$, the basic wrapper uses the last Halpern
state as the next anchor:
\begin{equation}
\label{eq:restart_epoch_update}
        w_{n+1,0}:=w_{n,K_n}.
\end{equation}
Algorithm~\ref{alg:restart_wrapper} records only the outer restart wrapper. It
is used with either the exact dPPM map of Algorithm~\ref{alg:halpern_dppm} or
the linearized PFBS map of Algorithm~\ref{alg:halpern_pfbs}; the inner reflected
Halpern step is the epoch iteration \eqref{eq:epoch_halpern}.

\begin{algorithm}[t]
\caption{Restart Wrapper for Reflected Halpern Schemes}
\label{alg:restart_wrapper}
{\small
\begin{algorithmic}[1]
\STATE \textbf{Input:} map $F$, relaxation $\gamma$, initial anchor $w_{0,0}$, restart rule.
\FOR{$n=0,1,2,\ldots$}
    \STATE Run \eqref{eq:epoch_halpern} from anchor $w_{n,0}$ until the restart rule is met.
    \STATE Denote the stopping index by $K_n$.
    \STATE Set $w_{n+1,0}\leftarrow w_{n,K_n}$.
\ENDFOR
\end{algorithmic}
}
\end{algorithm}

We use two restart rules. The first is a fixed-frequency rule. If a sharpness
constant $\alpha$ is known and a target contraction factor $\beta\in(0,1)$ is
chosen, set
\begin{equation}
\label{eq:fixed_restart_length}
        K_n\equiv k^\star,
        \qquad
        k^\star:=\left\lceil \frac{2}{\alpha\beta(1+\gamma)}\right\rceil .
\end{equation}
The second is an adaptive rule based only on the observed residual. Given
$\beta\in(0,1)$ and an initial epoch length $K_0\in\mathbb N$, restart when
\begin{equation}
\label{eq:adaptive_restart_F}
\begin{cases}
        r_F(w_{n,k})\le \beta r_F(w_{n,0}), & n\ge1,\\[0.3em]
        k\ge K_0, & n=0.
\end{cases}
\end{equation}

The single estimate needed for both restart rules is the following epochwise
residual bound.

\begin{lemma}[Epoch residual bound]
\label{lem:epoch_residual_bound}
For every epoch $n\ge0$ and every $k\ge0$,
\[
        (1+\gamma)r_F(w_{n,k})
        =
        \|w_{n,k}-\mathcal S_\gamma(w_{n,k})\|_{\mathcal M}
        \le
        \frac{2}{k+1}
        \operatorname{dist}_{\mathcal M}(w_{n,0},W^\star).
\]
Equivalently,
\begin{equation}
\label{eq:epoch_residual_bound_rf}
        r_F(w_{n,k})
        \le
        \frac{2}{(1+\gamma)(k+1)}
        \operatorname{dist}_{\mathcal M}(w_{n,0},W^\star).
\end{equation}
\end{lemma}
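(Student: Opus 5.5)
Each epoch is a plain Halpern iteration for the $\mathcal M$-nonexpansive map $\mathcal S_\gamma$, restarted from its own anchor $w_{n,0}$. So I will apply the standard Halpern residual bound epoch by epoch, exactly as in Propositions~\ref{prop:general_convergence}(ii) and \ref{prop:halpern_pfbs_basic}. The only data entering that bound are the anchor and the fixed-point set. The equality on the left is identity \eqref{eq:reflected_residual_relation}, and the displayed form \eqref{eq:epoch_residual_bound_rf} follows by dividing by $1+\gamma>0$. It therefore suffices to prove
\[
\|w_{n,k}-\mathcal S_\gamma(w_{n,k})\|_{\mathcal M}\le \frac{2}{k+1}\|w_{n,0}-w^\star\|_{\mathcal M}
\]
for every $w^\star\in W^\star$, and then take the infimum over $W^\star$.

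First I check the hypotheses. $\mathcal S_\gamma$ is $\mathcal M$-nonexpansive: for $F=\widehat{\mathcal T}_{\rm lin}$ this is Theorem~\ref{thm:sec32_relaxation}, and for $F=\widehat{\mathcal T}$ it is the reflected dPPM nonexpansiveness used in Proposition~\ref{prop:general_convergence}, namely firm nonexpansiveness of $\widehat{\mathcal T}$ in the $\mathcal M$-seminorm. Next, $\operatorname{Fix}(\mathcal S_\gamma)=\operatorname{Fix}(F)=W^\star$, because $1+\gamma>0$; this is the same argument as in the proof of Proposition~\ref{prop:halpern_pfbs_basic}. Finally, the epoch recursion \eqref{eq:epoch_halpern} is literally Algorithm~\ref{alg:halpern_dppm} or Algorithm~\ref{alg:halpern_pfbs} started at $w^0:=w_{n,0}$, since $\bar w^k=\mathcal S_\gamma(w^k)$. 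Only the inner index $k$ matters; what happened in earlier epochs is irrelevant.

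Then I invoke the earlier results with $w^0=w_{n,0}$. For the exact map, Proposition~\ref{prop:general_convergence}(ii) gives $\|w^k-\bar w^k\|_{\mathcal M}\le 2\|w^0-w^\star\|_{\mathcal M}/(k+1)$ for all $w^\star\in\mathcal T^{-1}(0)$ and all $\gamma\in(-1,1]$. For the linearized map, the first line of \eqref{eq:pfbs_residuals} gives the same bound, with the distance already in place.

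The main subtlety is the degenerate case $\mathcal M\succeq0$ for the exact map, where $\|\cdot\|_{\mathcal M}$ is only a seminorm. There the abstract Halpern estimate is not the Euclidean-Hilbert statement. I would rely on the cited specialization of \cite{sun2025accelerating}, which works through $\mathcal M=CC^\ast$ and yields the bound in the seminorm for every $w^\star\in W^\star$. The infimum then produces the semidistance $\operatorname{dist}_{\mathcal M}(w_{n,0},W^\star)$, consistent with the convention after Definition~\ref{def:sharpness}. One more hypothesis needs checking: Proposition~\ref{prop:general_convergence}(ii) does not require the continuity hypothesis of part~(i), so no extra regularity is needed beyond the standing assumptions and $W^\star\neq\emptyset$.

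If a self-contained argument were preferred, I would reproduce the Lieder-type proof. It uses the potential $(k+1)^2$-weighted inequality obtained from nonexpansiveness against $w^\star$, and it only involves inner products in $\langle\cdot,\cdot\rangle_{\mathcal M}$, so it goes through verbatim in the seminorm.
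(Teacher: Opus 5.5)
Your proposal is correct and follows the paper's proof. Both arguments apply the Halpern residual estimates of Propositions~\ref{prop:general_convergence}(ii) and~\ref{prop:halpern_pfbs_basic} to each epoch with anchor $w_{n,0}$, take the infimum over $W^\star$, and use \eqref{eq:reflected_residual_relation} to pass between $\mathcal S_\gamma$ and $r_F$; your additional checks (the fixed-point set, the seminorm case for singular $\mathcal M$, and that part~(ii) needs no continuity) are accurate elaborations of steps the paper leaves implicit.
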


\begin{proof}
For the admissible choices of $\gamma$, the map $\mathcal S_\gamma$ is
$\mathcal M$-nonexpansive and
\[
        \operatorname{Fix}(\mathcal S_\gamma)
        =
        \operatorname{Fix}(F)
        =
        W^\star.
\]
The residual estimates in Propositions~\ref{prop:general_convergence}
and~\ref{prop:halpern_pfbs_basic} apply to the two possible choices of $F$.
Applied to the inner iteration \eqref{eq:epoch_halpern}, they give, for every
$w^\star\in W^\star$,
\[
        \|w_{n,k}-\mathcal S_\gamma(w_{n,k})\|_{\mathcal M}
        \le
        \frac{2}{k+1}\|w_{n,0}-w^\star\|_{\mathcal M}.
\]
Taking the infimum over $w^\star\in W^\star$ and using
\eqref{eq:reflected_residual_relation} proves the claim.
\end{proof}

\begin{remark}[Residual-dominated restart candidates]
\label{rem:restart_candidate_selection}
The estimates below admit a direct extension to residual-dominated restart
candidate selection. Suppose that, at a restart check, an auxiliary candidate
\(\widetilde w_{n,k}\) is available and the next anchor is selected as a point
\[
        c_{n,k}\in\{w_{n,k},\widetilde w_{n,k}\},
        \qquad
        r_F(c_{n,k})\le \eta\, r_F(w_{n,k})
\]
for a fixed \(\eta\ge1\), with \(w_{n+1,0}:=c_{n,K_n}\). If the selected
candidates remain in the set on which sharpness is imposed, the proofs below
apply with the factor \(\eta\) multiplying the residual bound. In particular,
for the greedy choice \(\eta=1\), which selects the candidate with the smaller
fixed-point residual, the stated restart constants are unchanged. This is the
form used in the LP implementation: the candidate set consists of the current
primal--dual point and the Halpern-updated point.
\end{remark}

\subsection{Restart-Anchor Contraction under Fixed-Point Sharpness}
\label{sec:restart_linear}

Fixed-point sharpness on the visited restart set converts the $O(1/k)$ inner
residual decay into a linear contraction of restart anchors across epochs.

\begin{theorem}[Fixed-frequency restart]
\label{thm:fixed_restart_linear}
Let the restart points $\{w_{n,0}\}_{n\ge0}$ be generated by
\eqref{eq:epoch_halpern} and \eqref{eq:restart_epoch_update} with the fixed epoch
length \eqref{eq:fixed_restart_length} for a prescribed $\beta\in(0,1)$.
Assume that all iterates remain in a set $\mathcal K\subset\mathcal H$ on which
$F$ satisfies the $\alpha$-sharpness condition \eqref{eq:sharpness_F}. Then
\begin{equation}
\label{eq:fixed_restart_linear}
        \operatorname{dist}_{\mathcal M}(w_{n,0},W^\star)
        \le
        \beta^n
        \operatorname{dist}_{\mathcal M}(w_{0,0},W^\star),
        \qquad
        \forall n\ge0.
\end{equation}
\end{theorem}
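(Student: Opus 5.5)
The plan is an induction on the epoch index $n$, using the one-step contraction
\[
\operatorname{dist}_{\mathcal M}(w_{n+1,0},W^\star)\le\beta\,\operatorname{dist}_{\mathcal M}(w_{n,0},W^\star)
\]
for every $n\ge0$. Iterating this from $n=0$ gives \eqref{eq:fixed_restart_linear}, and the case $n=0$ is trivial.

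To prove the one-step bound, fix an epoch $n$. By the restart update \eqref{eq:restart_epoch_update} we have $w_{n+1,0}=w_{n,k^\star}$. By hypothesis this point lies in $\mathcal K$, so the $\alpha$-sharpness condition \eqref{eq:sharpness_F} applies and gives
\[
\alpha\,\operatorname{dist}_{\mathcal M}(w_{n+1,0},W^\star)\le r_F(w_{n,k^\star}).
\]
Next, Lemma~\ref{lem:epoch_residual_bound} with $k=k^\star$ gives
\[
r_F(w_{n,k^\star})\le\frac{2}{(1+\gamma)(k^\star+1)}\operatorname{dist}_{\mathcal M}(w_{n,0},W^\star).
\]
The choice \eqref{eq:fixed_restart_length} ensures $k^\star+1>k^\star\ge 2/(\alpha\beta(1+\gamma))$, hence $2/((1+\gamma)(k^\star+1))\le\alpha\beta$. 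Combining the two displays and dividing by $\alpha>0$ yields the contraction with factor $\beta$.

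The one point needing care is that Lemma~\ref{lem:epoch_residual_bound} must apply within every epoch. This holds because each epoch is a fresh Halpern run of the same $\mathcal M$-nonexpansive map $\mathcal S_\gamma$, whose fixed-point set is $W^\star$, started from the anchor $w_{n,0}$. The lemma is stated for every epoch, and its bound holds for every $w^\star\in W^\star$, so taking the infimum gives the distance form. In the semidefinite exact case, $\operatorname{dist}_{\mathcal M}$ is a semidistance, but every inequality above is still valid because none of them uses definiteness.

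I expect no substantive obstacle. The only thing to state explicitly is that sharpness is invoked only at the anchors $w_{n,k^\star}$, and these lie in $\mathcal K$ by assumption.
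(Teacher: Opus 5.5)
Your proposal is correct and follows essentially the same argument as the paper. Both apply sharpness at the new anchor $w_{n+1,0}=w_{n,k^\star}\in\mathcal K$, then Lemma~\ref{lem:epoch_residual_bound} at $k=k^\star$, then use the choice of $k^\star$ to get the contraction factor $\beta$, and finally iterate over epochs.
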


\begin{proof}
By \eqref{eq:restart_epoch_update}, $w_{n+1,0}=w_{n,k^\star}$. Since the
iterates lie in $\mathcal K$, sharpness at $w_{n,k^\star}$ gives
\[
        \operatorname{dist}_{\mathcal M}(w_{n+1,0},W^\star)
        \le
        \frac{1}{\alpha}r_F(w_{n,k^\star}).
\]
Using \eqref{eq:epoch_residual_bound_rf},
\[
        \operatorname{dist}_{\mathcal M}(w_{n+1,0},W^\star)
        \le
        \frac{2}{\alpha(1+\gamma)(k^\star+1)}
        \operatorname{dist}_{\mathcal M}(w_{n,0},W^\star).
\]
The choice of $k^\star$ in \eqref{eq:fixed_restart_length} implies
\[
        \frac{2}{\alpha(1+\gamma)(k^\star+1)}
        \le
        \frac{2}{\alpha(1+\gamma)k^\star}
        \le
        \beta.
\]
Therefore the restart anchors contract by a factor not exceeding $\beta$ at every
epoch, and iteration gives \eqref{eq:fixed_restart_linear}.
\end{proof}

The adaptive rule does not require prior knowledge of $\alpha$. For this result
we assume $\mathcal M\succ0$; then $\operatorname{dist}_{\mathcal M}$ is a
genuine distance and the residual-to-distance ratio below is well defined away
from the solution set.

\begin{theorem}[Adaptive restart]
\label{thm:adaptive_restart_linear}
Let the restart points $\{w_{n,0}\}_{n\ge0}$ be generated by
\eqref{eq:epoch_halpern} and \eqref{eq:restart_epoch_update} with the adaptive
rule \eqref{eq:adaptive_restart_F}. Assume that $\mathcal M\succ0$, that all
iterates remain in a set $\mathcal K\subset\mathcal H$, and that $F$ satisfies
the $\alpha$-sharpness condition \eqref{eq:sharpness_F} on $\mathcal K$. Then the
following statements hold.
\begin{enumerate}
\item[(i)] Every epoch $n\ge1$ terminates after finitely many inner iterations.
If $w_{n,0}\notin W^\star$, define
\[
        \alpha_n
        :=
        \frac{r_F(w_{n,0})}
        {\operatorname{dist}_{\mathcal M}(w_{n,0},W^\star)},
\]
then $\alpha_n\ge\alpha$ and the stopping index satisfies
\begin{equation}
\label{eq:adaptive_restart_bound}
        K_n
        \le
        \left\lceil\frac{2}{\alpha_n\beta(1+\gamma)}\right\rceil
        \le
        \left\lceil\frac{2}{\alpha\beta(1+\gamma)}\right\rceil .
\end{equation}
If $w_{n,0}\in W^\star$, then $r_F(w_{n,0})=0$ and the restart condition holds
at $k=0$.

\item[(ii)] The fixed-point residuals at the restart points satisfy the linear
bound:
\begin{equation}
\label{eq:adaptive_residual_linear}
        r_F(w_{n,0})
        \le
        \beta^{n-1}r_F(w_{1,0}),
        \qquad
        \forall n\ge1.
\end{equation}

\item[(iii)] The restart points satisfy the linear $\mathcal M$-distance bound:
\begin{equation}
\label{eq:adaptive_distance_linear}
        \operatorname{dist}_{\mathcal M}(w_{n,0},W^\star)
        \le
        \frac{2}{\alpha(1+\gamma)(K_0+1)}
        \beta^{n-1}
        \operatorname{dist}_{\mathcal M}(w_{0,0},W^\star),
        \qquad
        \forall n\ge1.
\end{equation}
\end{enumerate}
\end{theorem}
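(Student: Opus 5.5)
The plan is to combine the epoch residual bound of Lemma~\ref{lem:epoch_residual_bound} with the sharpness inequality \eqref{eq:sharpness_F}, applied at the anchors $w_{n,0}\in\mathcal K$, and to treat the three parts in order.

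\textbf{Part (i).} Fix $n\ge1$ and suppose first that $w_{n,0}\notin W^\star$. Since $\mathcal M\succ0$, $\operatorname{dist}_{\mathcal M}(w_{n,0},W^\star)>0$, so $\alpha_n$ is well defined. Sharpness at $w_{n,0}$ gives $\alpha_n\ge\alpha$ directly. Lemma~\ref{lem:epoch_residual_bound} then yields
\[
r_F(w_{n,k})\le \frac{2}{(1+\gamma)(k+1)}\cdot\frac{r_F(w_{n,0})}{\alpha_n}.
\]
Take $k=\lceil 2/(\alpha_n\beta(1+\gamma))\rceil$. Then $(k+1)\ge k\ge 2/(\alpha_n\beta(1+\gamma))$, so the right-hand side is at most $\beta r_F(w_{n,0})$. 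Hence the restart test \eqref{eq:adaptive_restart_F} is met no later than this $k$, which gives the first inequality in \eqref{eq:adaptive_restart_bound}. Monotonicity of the ceiling together with $\alpha_n\ge\alpha$ gives the second.

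If instead $w_{n,0}\in W^\star$, then $F(w_{n,0})=w_{n,0}$, so $r_F(w_{n,0})=0$ and the test $0\le\beta\cdot0$ holds at $k=0$. I will adopt the convention $K_n=0$ in that case, so that $w_{n+1,0}=w_{n,0}$ and all later residuals are zero.

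\textbf{Part (ii).} By construction, for $n\ge1$ we have $r_F(w_{n+1,0})=r_F(w_{n,K_n})\le\beta r_F(w_{n,0})$. Induction starting from $n=1$ gives \eqref{eq:adaptive_residual_linear}.

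\textbf{Part (iii).} Sharpness at $w_{n,0}$ gives $\operatorname{dist}_{\mathcal M}(w_{n,0},W^\star)\le\alpha^{-1}r_F(w_{n,0})$. Combining with (ii),
\[
\operatorname{dist}_{\mathcal M}(w_{n,0},W^\star)\le\alpha^{-1}\beta^{n-1}r_F(w_{1,0}).
\]
It remains to bound $r_F(w_{1,0})$. Epoch $0$ stops at $K_0$ exactly: I read the rule "$k\ge K_0$" as stopping at the first such index, so $w_{1,0}=w_{0,K_0}$. Lemma~\ref{lem:epoch_residual_bound} with $n=0$, $k=K_0$ then gives
\[
r_F(w_{1,0})\le\frac{2}{(1+\gamma)(K_0+1)}\operatorname{dist}_{\mathcal M}(w_{0,0},W^\star).
\]
Substituting yields \eqref{eq:adaptive_distance_linear}.

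\textbf{Main obstacle.} The argument is mostly bookkeeping, and the only delicate points are conventions. One is the degenerate case $w_{n,0}\in W^\star$, handled as above. The other is making sure the lemma is applied with the anchor of the current epoch. The lemma holds for every epoch because each epoch is a fresh Halpern run from its own anchor, and the admissible $\gamma$ keeps $\mathcal S_\gamma$ nonexpansive. Sharpness is used only at anchors, which lie in $\mathcal K$ by hypothesis.
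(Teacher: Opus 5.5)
Your proposal is correct and follows the paper's proof essentially step by step. It uses sharpness at the anchor $w_{n,0}$ together with Lemma~\ref{lem:epoch_residual_bound} for (i), iterates $r_F(w_{n+1,0})\le\beta r_F(w_{n,0})$ for (ii), and uses sharpness plus the epoch-$0$ bound at $k=K_0$ (reading the initial rule as $w_{1,0}=w_{0,K_0}$, as the paper does) for (iii), while your explicit choice $k=\lceil 2/(\alpha_n\beta(1+\gamma))\rceil$ and the convention $K_n=0$ in the degenerate case only make the paper's bookkeeping more explicit.
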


\begin{proof}
We prove the three claims in order. For (i), fix $n\ge1$. If
$w_{n,0}\in W^\star$, then $r_F(w_{n,0})=0$, and the adaptive rule is satisfied
at $k=0$. Otherwise, sharpness gives
$\alpha_n\ge\alpha$. By \eqref{eq:epoch_residual_bound_rf}, for every $k\ge0$,
\[
        r_F(w_{n,k})
        \le
        \frac{2}{(1+\gamma)(k+1)}
        \operatorname{dist}_{\mathcal M}(w_{n,0},W^\star)
        =
        \frac{2}{\alpha_n(1+\gamma)(k+1)}r_F(w_{n,0}).
\]
Thus the condition $r_F(w_{n,k})\le\beta r_F(w_{n,0})$ holds once
$k+1\ge2/(\alpha_n\beta(1+\gamma))$, proving finite termination and
\eqref{eq:adaptive_restart_bound}.

For (ii), the adaptive rule and the restart update imply, for every $n\ge1$,
\[
        r_F(w_{n+1,0})
        =
        r_F(w_{n,K_n})
        \le
        \beta r_F(w_{n,0}).
\]
Iterating this inequality proves \eqref{eq:adaptive_residual_linear}.

For (iii), sharpness at $w_{n,0}$ and \eqref{eq:adaptive_residual_linear} give
\[
        \operatorname{dist}_{\mathcal M}(w_{n,0},W^\star)
        \le
        \frac{1}{\alpha}\beta^{n-1}r_F(w_{1,0}),
        \qquad n\ge1.
\]
Since the initial epoch has length $K_0$, $w_{1,0}=w_{0,K_0}$, and
\eqref{eq:epoch_residual_bound_rf} yields
\[
        r_F(w_{1,0})
        \le
        \frac{2}{(1+\gamma)(K_0+1)}
        \operatorname{dist}_{\mathcal M}(w_{0,0},W^\star).
\]
Combining the last two inequalities proves \eqref{eq:adaptive_distance_linear}.
\end{proof}

\subsection{Verifying Sharpness from KKT Error Bounds}
\label{sec:sharpness_verification}

The restart theorems above require the fixed-point sharpness condition
\eqref{eq:sharpness_F}. We connect this condition with standard KKT error
bounds. This is the only verification direction needed for the restart theory.
In particular, we do not require a converse equivalence between fixed-point
sharpness and metric subregularity.

Let $U\subset\mathcal H$. We say that the augmented KKT mapping satisfies an
error bound on $U$ if there exists $\alpha_U>0$ such that
\begin{equation}
\label{eq:kkt_error_bound_U}
        \alpha_U\operatorname{dist}(u,W^\star)
        \le
        \operatorname{dist}(0,\mathcal T(u)),
        \qquad
        \forall u\in U.
\end{equation}
This is the usual metric-subregularity estimate, restricted to the set $U$.

The first lemma converts Euclidean sharpness estimates into the
$\mathcal M$-metric sharpness used in the restart analysis.

\begin{lemma}[Metric conversion of sharpness]
\label{lem:metric_conversion_sharpness}
Assume $\mathcal M\succ0$, and let
$\lambda_{\min}(\mathcal M)$ and $\lambda_{\max}(\mathcal M)$ denote its extremal
eigenvalues. If a mapping $F$ satisfies
\[
        \alpha_E\operatorname{dist}(w,W^\star)
        \le
        \|w-F(w)\|,
        \qquad
        \forall w\in\mathcal K,
\]
then $F$ satisfies \eqref{eq:sharpness_F} on $\mathcal K$ with
\[
        \alpha_M
        :=
        \alpha_E
        \sqrt{\frac{\lambda_{\min}(\mathcal M)}
        {\lambda_{\max}(\mathcal M)}}.
\]
\end{lemma}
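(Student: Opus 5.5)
The argument only needs the two standard eigenvalue comparisons between the Euclidean norm and the $\mathcal M$-norm. Since $\mathcal M\succ0$ is self-adjoint, every $v\in\mathcal H$ satisfies
\[
\lambda_{\min}(\mathcal M)\|v\|^2\le\|v\|_{\mathcal M}^2\le\lambda_{\max}(\mathcal M)\|v\|^2 .
\]
I will use the lower bound on the residual side and the upper bound on the distance side.

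\emph{Residual side.} Applying the lower bound with $v=w-F(w)$ gives
\[
\|w-F(w)\|\le\lambda_{\min}(\mathcal M)^{-1/2}\,r_F(w).
\]

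\emph{Distance side.} Fix $w\in\mathcal K$. For every $w^\star\in W^\star$, the upper bound gives
\[
\|w-w^\star\|_{\mathcal M}\le\lambda_{\max}(\mathcal M)^{1/2}\|w-w^\star\| .
\]
Taking the infimum over $w^\star$ on both sides yields
\[
\operatorname{dist}_{\mathcal M}(w,W^\star)\le\lambda_{\max}(\mathcal M)^{1/2}\operatorname{dist}(w,W^\star).
\]
Because the comparison holds pointwise for each $w^\star$ before the infimum is taken, I do not need attainment of the distance. This matters if $W^\star$ were not closed. In fact $W^\star$ is closed, since $\mathcal T$ is maximal monotone by Proposition~\ref{prop:structure_sec2} (or Proposition~\ref{prop:split_h_general} in the linearized case).

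\emph{Chaining.} Combining the two sides with the Euclidean hypothesis gives
\[
\alpha_E\lambda_{\max}(\mathcal M)^{-1/2}\operatorname{dist}_{\mathcal M}(w,W^\star)\le\alpha_E\operatorname{dist}(w,W^\star)\le\|w-F(w)\|\le\lambda_{\min}(\mathcal M)^{-1/2}r_F(w).
\]
Multiplying through by $\lambda_{\min}(\mathcal M)^{1/2}$ gives exactly \eqref{eq:sharpness_F} with the constant $\alpha_M=\alpha_E\sqrt{\lambda_{\min}(\mathcal M)/\lambda_{\max}(\mathcal M)}$. This constant is positive because $\mathcal M\succ0$.

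There is no real obstacle here. The only point needing care is the direction of each eigenvalue bound: the lower bound is used on the residual and the upper bound on the distance.
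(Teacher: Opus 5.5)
Your proof is correct and is the paper's argument: bound $\operatorname{dist}_{\mathcal M}(w,W^\star)\le\sqrt{\lambda_{\max}(\mathcal M)}\,\operatorname{dist}(w,W^\star)$ and $r_F(w)\ge\sqrt{\lambda_{\min}(\mathcal M)}\,\|w-F(w)\|$, then chain both with the Euclidean hypothesis. The aside on closedness of $W^\star$ is harmless but unnecessary, and positivity of $\alpha_M$ also uses $\alpha_E>0$, which is implicit in the hypothesis, not only $\mathcal M\succ0$.
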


\begin{proof}
For every $w$,
\[
        \operatorname{dist}_{\mathcal M}(w,W^\star)
        \le
        \sqrt{\lambda_{\max}(\mathcal M)}\operatorname{dist}(w,W^\star),
\]
whereas
\[
        r_F(w)=\|w-F(w)\|_{\mathcal M}
        \ge
        \sqrt{\lambda_{\min}(\mathcal M)}\|w-F(w)\|.
\]
Combining these inequalities with the Euclidean sharpness estimate proves the
claim.
\end{proof}

\begin{proposition}[KKT error bounds imply fixed-point sharpness]
\label{prop:kkt_error_bounds_imply_sharpness}
Assume that the KKT error bound \eqref{eq:kkt_error_bound_U} holds on a set
$U\subset\mathcal H$. Let $\mathcal K\subset\mathcal H$ satisfy
\[
        \widehat{\mathcal T}(\mathcal K)
        \cup
        \widehat{\mathcal T}_{\rm lin}(\mathcal K)
        \subset U.
\]
Then the following statements hold.
\begin{enumerate}
\item[(i)] The exact dPPM map satisfies
\[
        \alpha_{{\rm ex},U}^E\operatorname{dist}(w,W^\star)
        \le
        \|w-\widehat{\mathcal T}(w)\|,
        \qquad
        \forall w\in\mathcal K,
\]
with
\[
        \alpha_{{\rm ex},U}^E
        :=
        \frac{\alpha_U}{\alpha_U+\|\mathcal M\|}.
\]

\item[(ii)] The linearized PFBS map satisfies
\[
        \alpha_{{\rm lin},U}^E\operatorname{dist}(w,W^\star)
        \le
        \|w-\widehat{\mathcal T}_{\rm lin}(w)\|,
        \qquad
        \forall w\in\mathcal K,
\]
with
\[
        \alpha_{{\rm lin},U}^E
        :=
        \frac{\alpha_U}{\alpha_U+\|\mathcal M\|+L_h}.
\]

\item[(iii)] If $\mathcal M\succ0$, then both maps satisfy the sharpness
condition \eqref{eq:sharpness_F} on $\mathcal K$, with the constants from
parts~(i)--(ii) multiplied by
$\sqrt{\lambda_{\min}(\mathcal M)/\lambda_{\max}(\mathcal M)}$.
\end{enumerate}
\end{proposition}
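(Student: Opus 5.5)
The plan is a triangle inequality anchored at the shadow point $\hat w:=F(w)$, combined with the residual inclusions already recorded after Corollary~\ref{cor:identified-shadow-reduced-residuals}. Fix $w\in\mathcal K$ and write $\hat w=\widehat{\mathcal T}(w)$ (respectively $\widehat{\mathcal T}_{\rm lin}(w)$). By hypothesis $\hat w\in U$. Since $\operatorname{dist}(\cdot,W^\star)$ is $1$-Lipschitz,
\[
\operatorname{dist}(w,W^\star)\le \|w-\hat w\|+\operatorname{dist}(\hat w,W^\star).
\]
The error bound \eqref{eq:kkt_error_bound_U} at $\hat w$ then gives $\operatorname{dist}(\hat w,W^\star)\le \alpha_U^{-1}\operatorname{dist}(0,\mathcal T(\hat w))$. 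It remains to bound the residual at the shadow point by a multiple of $\|w-\hat w\|$.

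\emph{Exact map.} The resolvent relation $\mathcal M(w-\hat w)\in\mathcal T(\hat w)$ gives
\[
\operatorname{dist}(0,\mathcal T(\hat w))\le\|\mathcal M\|\,\|w-\hat w\|.
\]
Therefore
\[
\operatorname{dist}(w,W^\star)\le\Bigl(1+\frac{\|\mathcal M\|}{\alpha_U}\Bigr)\|w-\hat w\|=\frac{\alpha_U+\|\mathcal M\|}{\alpha_U}\,\|w-\hat w\|,
\]
which is exactly (i).

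\emph{Linearized map.} The PFBS relation gives
\[
\mathcal M(w-\hat w)+\mathcal B_h(\hat w)-\mathcal B_h(w)\in\mathcal T(\hat w),
\]
as in the proof of Theorem~\ref{thm:halpern_pfbs_rate}. The Euclidean Lipschitz bound $\|\mathcal B_h(\hat w)-\mathcal B_h(w)\|\le L_h\|\hat x-x\|\le L_h\|w-\hat w\|$ from Proposition~\ref{prop:split_h_general} then yields
\[
\operatorname{dist}(0,\mathcal T(\hat w))\le(\|\mathcal M\|+L_h)\|w-\hat w\|.
\]
The same rearrangement as in the exact case gives (ii) with constant $\alpha_U/(\alpha_U+\|\mathcal M\|+L_h)$.

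\emph{Part (iii).} This is a direct application of Lemma~\ref{lem:metric_conversion_sharpness} to the Euclidean estimates in (i) and (ii), using $\mathcal M\succ0$.

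There is no real obstacle. The only point needing care is where the error bound is applied: it must be evaluated at the shadow point $\hat w$ rather than at $w$, because $\mathcal T(w)$ need not be controlled (it may even be empty). This is exactly why the hypothesis is phrased as $F(\mathcal K)\subset U$ rather than $\mathcal K\subset U$. One should also note that the proof uses Euclidean operator norms and does not need $\mathcal M\succ0$ for parts (i) and (ii).
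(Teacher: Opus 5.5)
Your proposal is correct and follows the paper's proof essentially step for step. Like the paper, you apply the error bound at the shadow point $\widehat{\mathcal T}(w)$ or $\widehat{\mathcal T}_{\rm lin}(w)$, bound the residual there through the resolvent and PFBS inclusions (with the extra $L_h$ term in the linearized case), close with the triangle inequality, and obtain part (iii) from Lemma~\ref{lem:metric_conversion_sharpness}.
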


\begin{proof}
For the exact map, fix $w\in\mathcal K$ and set
$u:=\widehat{\mathcal T}(w)$. Then $u\in U$, and the resolvent relation gives
\[
        \mathcal M(w-u)\in\mathcal T(u).
\]
Therefore
\[
        \alpha_U\operatorname{dist}(u,W^\star)
        \le
        \operatorname{dist}(0,\mathcal T(u))
        \le
        \|\mathcal M(w-u)\|
        \le
        \|\mathcal M\|\|w-u\|.
\]
The triangle inequality yields
\[
        \operatorname{dist}(w,W^\star)
        \le
        \|w-u\|+\operatorname{dist}(u,W^\star)
        \le
        \left(1+\frac{\|\mathcal M\|}{\alpha_U}\right)\|w-u\|,
\]
which proves part~(i).

For the linearized map, set $u:=\widehat{\mathcal T}_{\rm lin}(w)$. The PFBS
relation gives
\[
        (\mathcal M-\mathcal B_h)(w)
        \in
        (\mathcal M+\mathcal A_h)(u),
\]
and therefore
\[
        \mathcal M(w-u)+\mathcal B_h(u)-\mathcal B_h(w)
        \in
        (\mathcal A_h+\mathcal B_h)(u)
        =
        \mathcal T(u).
\]
Since $u\in U$, the error bound and the $L_h$-Lipschitz continuity of
$\mathcal B_h$ give
\[
\begin{aligned}
        \alpha_U\operatorname{dist}(u,W^\star)
        &\le
        \operatorname{dist}(0,\mathcal T(u)) \\
        &\le
        \|\mathcal M(w-u)+\mathcal B_h(u)-\mathcal B_h(w)\| \\
        &\le
        (\|\mathcal M\|+L_h)\|w-u\|.
\end{aligned}
\]
The same triangle-inequality argument proves part~(ii). Part~(iii) follows from
Lemma~\ref{lem:metric_conversion_sharpness}.
\end{proof}

\begin{remark}[Local-to-tail interpretation]
\label{rem:local_to_tail_sharpness}
Proposition~\ref{prop:kkt_error_bounds_imply_sharpness} is local in the same
sense as metric subregularity. The error-bound set \(U\) need only contain
the forward images of the part of the trajectory on which the restart theorem
is applied. Thus, if a KKT error bound holds in a neighborhood \(U\) of a
solution \(w^\star\), and if a relevant tail set
\(\mathcal K_{\rm tail}\) of restart points satisfies
\[
        \widehat{\mathcal T}(\mathcal K_{\rm tail})
        \cup
        \widehat{\mathcal T}_{\rm lin}(\mathcal K_{\rm tail})
        \subset U,
\]
then the fixed-point sharpness condition holds on this tail set in the
positive definite metric setting. Since both maps are continuous under the
standing assumptions of Sections~\ref{sec2}--\ref{sec3}, this inclusion is
automatic for all sufficiently late points in such a tail set whenever the
restarted trajectory converges to \(w^\star\). The restart results therefore
give local or tail linear convergence of restart anchors after the trajectory
has entered the error-bound neighborhood.
\end{remark}

\begin{proposition}[Box-constrained linear and quadratic programs]
\label{prop:box_lp_qp_error_bound}
Let
\[
        C:=[l,u]\subset\mathbb R^n,\qquad g=\delta_C,
\]
where \(C\) is a nonempty box. Suppose that either
\[
        f(x)=c^\top x
        \quad\text{or}\quad
        f(x)=\frac12 x^\top Hx+c^\top x,\qquad H\succeq0.
\]
Then, for every \(w^\star\in W^\star\), there exist a neighborhood
\(U\) of \(w^\star\) and a constant \(\alpha_U>0\) such that the augmented KKT
mapping satisfies
\[
        \alpha_U\operatorname{dist}(w,W^\star)
        \le
        \operatorname{dist}(0,\mathcal T(w)),
        \qquad
        \forall w\in U.
\]
Consequently, if \(\mathcal M\succ0\) and
\[
        \widehat{\mathcal T}(\mathcal K)
        \cup
        \widehat{\mathcal T}_{\rm lin}(\mathcal K)
        \subset U,
\]
then both the exact and linearized fixed-point maps satisfy the sharpness
condition \eqref{eq:sharpness_F} on \(\mathcal K\).
\end{proposition}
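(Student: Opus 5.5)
The plan is to show that the augmented KKT mapping $\mathcal T$ is a polyhedral multifunction, invoke Robinson's theorem on upper Lipschitz continuity of polyhedral multifunctions to get local metric subregularity at every point of $W^\star$, and then pass to fixed-point sharpness via Proposition~\ref{prop:kkt_error_bounds_imply_sharpness}.

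First, $\mathcal T$ is polyhedral. With $g=\delta_C$ and $f$ linear or convex quadratic,
\[
\mathcal T(x,y)=\binom{Hx+c+N_C(x)-A^\top y+\sigma A^\top(Ax-b)}{Ax-b},
\]
where $H=0$ in the LP case. The normal cone to a box decomposes coordinatewise. Each $N_{[l_i,u_i]}$ has a graph that is a finite union of polyhedra: the pieces correspond to the three cases $x_i=l_i$, $l_i<x_i<u_i$, $x_i=u_i$, with the corresponding sign or zero restrictions on the multiplier, and infinite bounds are handled by dropping the relevant pieces. Hence $\operatorname{gph}N_C$ is a finite union of polyhedral convex sets, one for each choice of active pattern. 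The remaining part of $\mathcal T$ is an affine map of $(x,y)$, so $\operatorname{gph}\mathcal T$ is the image of $\operatorname{gph}N_C\times\mathbb R^m$ under an invertible affine change of variables. It is therefore again a finite union of polyhedra, i.e.\ $\mathcal T$ is a polyhedral multifunction.

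Second, Robinson's theorem~\cite{robinson1981continuity} states that a polyhedral multifunction $S$ is locally upper Lipschitz at every point: there exist $\kappa>0$ and $\varepsilon>0$ with
\[
S(p)\subset S(0)+\kappa\|p\|\mathbb B\qquad\text{whenever }\|p\|\le\varepsilon.
\]
We apply this to $S=\mathcal T^{-1}$, which is polyhedral since its graph is the transposed graph of $\mathcal T$, and note $S(0)=W^\star$. For any $w$ with $\operatorname{dist}(0,\mathcal T(w))\le\varepsilon$, pick a least-norm $p\in\mathcal T(w)$; it exists because $\mathcal T(w)$ is closed, being a translate of a polyhedral cone. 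Then $w\in S(p)$, so
\[
\operatorname{dist}(w,W^\star)\le\kappa\,\operatorname{dist}(0,\mathcal T(w)).
\]

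Third, we turn this into the neighborhood statement, and this is the main subtlety. The inequality above holds only where the residual is at most $\varepsilon$; points with $\mathcal T(w)=\emptyset$ satisfy it trivially because the right-hand side is $+\infty$. To localize, take $U$ to be a ball around $w^\star$ small enough that every $w\in U$ with $\mathcal T(w)\neq\emptyset$ either has $\operatorname{dist}(0,\mathcal T(w))\le\varepsilon$ or otherwise satisfies the bound directly. The cleanest way is to use boundedness: on a ball of radius $\delta$ around $w^\star$ we have $\operatorname{dist}(w,W^\star)\le\delta$. So if the residual exceeds $\varepsilon$, the inequality holds with constant $\varepsilon/\delta$. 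Taking
\[
\alpha_U:=\min\{1/\kappa,\ \varepsilon/\delta\}
\]
gives \eqref{eq:kkt_error_bound_U} on $U$. In fact this yields the bound on any bounded set, which is stronger than required. The augmentation term $\sigma A^\top(Ax-b)$ needs no separate treatment, since it is affine.

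Finally, the consequence is immediate: given $\mathcal M\succ0$ and the inclusion $\widehat{\mathcal T}(\mathcal K)\cup\widehat{\mathcal T}_{\rm lin}(\mathcal K)\subset U$, Proposition~\ref{prop:kkt_error_bounds_imply_sharpness}(iii) yields the sharpness condition \eqref{eq:sharpness_F} on $\mathcal K$ for both maps, with explicit constants. The only step that needs real care is the polyhedrality bookkeeping and the correct form of Robinson's upper-Lipschitz result. An alternative is the Hoffman bound~\cite{hoffman1952approximate} applied piecewise over the finitely many active patterns near $w^\star$, combined by taking the minimum constant; I would fall back on this if a direct citation is not acceptable.
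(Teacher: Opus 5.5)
Your proof is correct and follows the paper's route. Like the paper, you show that $\mathcal T$ is a polyhedral multifunction, invoke Robinson's upper-Lipschitz (Hoffman--Robinson) result to get metric subregularity at $(w^\star,0)$, and transfer this to fixed-point sharpness via Proposition~\ref{prop:kkt_error_bounds_imply_sharpness}. Your explicit localization step, which takes $\alpha_U=\min\{1/\kappa,\varepsilon/\delta\}$ on a bounded ball, usefully fills in what the paper compresses into a citation. The only nit is that the affine map carrying $\operatorname{gph}N_C\times\mathbb R^m$ onto $\operatorname{gph}\mathcal T$ is an injective embedding rather than an invertible change of variables; this does not affect polyhedrality.
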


\begin{proof}
For the stated choices of \(f\), write \(\nabla f(x)=Hx+c\), with \(H=0\) in
the linear case. Since \(g=\delta_C\), we have \(\partial g=N_C\), and
\[
        \mathcal T(x,y)
        =
        \begin{pmatrix}
        (H+\sigma A^\top A)x-A^\top y+c-\sigma A^\top b+N_C(x)\\
        Ax-b
        \end{pmatrix}.
\]
The graph of \(N_C\) is a finite union of polyhedral sets: each piece is
obtained by fixing which lower bounds, upper bounds, and inactive box
constraints are active. Adding the displayed linear terms and the equality
component \(Ax-b\) preserves piecewise polyhedrality. Thus \(\mathcal T\) is a
piecewise polyhedral multifunction.

The Hoffman--Robinson error-bound theorem for piecewise polyhedral
multifunctions~\cite{hoffman1952approximate,robinson1981continuity} implies
metric subregularity of \(\mathcal T\) at \((w^\star,0)\). Thus there are a
neighborhood \(U\) of \(w^\star\) and a constant \(\kappa_U>0\) such that
\[
        \operatorname{dist}(w,\mathcal T^{-1}(0))
        \le
        \kappa_U\operatorname{dist}(0,\mathcal T(w)),
        \qquad
        \forall w\in U.
\]
Since \(W^\star=\mathcal T^{-1}(0)\), the asserted error bound follows with
\(\alpha_U=1/\kappa_U\). The final statement is exactly
Proposition~\ref{prop:kkt_error_bounds_imply_sharpness}.
\end{proof}

\begin{corollary}[Sharpness after affine-face identification]
\label{cor:identified_face_sharpness_for_restart}
Suppose the hypotheses of Corollary~\ref{cor:reduced-error-bound-transfer} hold,
and the augmented KKT mapping satisfies a face-restricted error bound on
$V:=\Psi(V_{\rm red})$. Let $\mathcal K_{\rm late}\subset\mathcal H$ be a set
such that
\[
        \widehat{\mathcal T}(\mathcal K_{\rm late})
        \cup
        \widehat{\mathcal T}_{\rm lin}(\mathcal K_{\rm late})
        \subset V.
\]
If $\mathcal M\succ0$, then both fixed-point maps satisfy the sharpness
condition \eqref{eq:sharpness_F} on $\mathcal K_{\rm late}$.
\end{corollary}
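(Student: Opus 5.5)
The plan is to reduce the claim to Proposition~\ref{prop:kkt_error_bounds_imply_sharpness} with $U:=V=\Psi(V_{\rm red})$. The hypothesis ``face-restricted error bound on $V$'' is exactly the conclusion of Corollary~\ref{cor:reduced-error-bound-transfer}: if the reduced error bound \eqref{eq:reduced-error-bound} holds on $V_{\rm red}$ with constant $\alpha_{\rm red}$, then
\[
        \alpha_{\rm red}\operatorname{dist}(w,W^\star)\le\operatorname{dist}(0,\mathcal T(w)),\qquad \forall w\in V .
\]
This is the KKT error bound \eqref{eq:kkt_error_bound_U} on the set $U=V$ with $\alpha_U=\alpha_{\rm red}$. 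It does not matter that $V$ is a thin subset (lying inside the affine face) rather than a full neighborhood. Proposition~\ref{prop:kkt_error_bounds_imply_sharpness} only requires the error bound on some set $U$ that contains the images $\widehat{\mathcal T}(\mathcal K)\cup\widehat{\mathcal T}_{\rm lin}(\mathcal K)$. It never uses openness of $U$ or evaluates $\mathcal T$ at points outside $U$.

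The key steps, in order, are as follows.

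First, record the identification $U:=V$, $\alpha_U:=\alpha_{\rm red}$, so that \eqref{eq:kkt_error_bound_U} holds on $U$.

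Second, take $\mathcal K:=\mathcal K_{\rm late}$. The assumed inclusion $\widehat{\mathcal T}(\mathcal K_{\rm late})\cup\widehat{\mathcal T}_{\rm lin}(\mathcal K_{\rm late})\subset V$ is exactly the inclusion hypothesis of Proposition~\ref{prop:kkt_error_bounds_imply_sharpness}.

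Third, apply parts~(i)--(ii) of that proposition. They give the Euclidean sharpness constants $\alpha_{\rm red}/(\alpha_{\rm red}+\|\mathcal M\|)$ for the exact map and $\alpha_{\rm red}/(\alpha_{\rm red}+\|\mathcal M\|+L_h)$ for the linearized map.

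Fourth, invoke part~(iii), equivalently Lemma~\ref{lem:metric_conversion_sharpness}, using $\mathcal M\succ0$. This multiplies both constants by $\sqrt{\lambda_{\min}(\mathcal M)/\lambda_{\max}(\mathcal M)}$ and yields \eqref{eq:sharpness_F} on $\mathcal K_{\rm late}$.

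One point to check carefully is that the error-bound argument in Proposition~\ref{prop:kkt_error_bounds_imply_sharpness} is applied at the shadow point $u=F(w)$, not at $w$ itself. The inclusions $\mathcal M(w-u)\in\mathcal T(u)$ and $\mathcal M(w-u)+\mathcal B_h(u)-\mathcal B_h(w)\in\mathcal T(u)$ need only $u\in V$. The subsequent triangle inequality $\operatorname{dist}(w,W^\star)\le\|w-u\|+\operatorname{dist}(u,W^\star)$ holds for arbitrary $w$. So the state $w$ may lie off the face, which is consistent with Remark~\ref{rem:normal-component-recurrence}.

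The main (mild) obstacle is being precise about what ``face-restricted error bound on $V$'' supplies. I would state explicitly that it means the inequality $\alpha\operatorname{dist}(w,W^\star)\le\operatorname{dist}(0,\mathcal T(w))$ for all $w\in V$, as obtained in Corollary~\ref{cor:reduced-error-bound-transfer}. I would also note that $W^\star$ there is the full KKT set, which matches the set used in Definition~\ref{def:sharpness}.
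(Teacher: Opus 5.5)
Your proposal is correct and follows the paper's proof. The paper likewise reads the face-restricted bound from Corollary~\ref{cor:reduced-error-bound-transfer} as the KKT error bound \eqref{eq:kkt_error_bound_U} on $U=V$, and then applies Proposition~\ref{prop:kkt_error_bounds_imply_sharpness} with the metric conversion of Lemma~\ref{lem:metric_conversion_sharpness} under $\mathcal M\succ0$. Your extra remarks are accurate and make explicit what the paper's two-line proof leaves implicit: $U$ need not be open, and the error bound is only evaluated at shadow points $F(w)\in V$, so the states $w$ may lie off the face.
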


\begin{proof}
Corollary~\ref{cor:reduced-error-bound-transfer} gives a KKT error bound of the
form \eqref{eq:kkt_error_bound_U} on $U=V$. Applying
Proposition~\ref{prop:kkt_error_bounds_imply_sharpness} and the metric
conversion in Lemma~\ref{lem:metric_conversion_sharpness} proves the result.
\end{proof}

Consequently, the restart theorems in Section~\ref{sec:restart_linear} can be
used in two ways. If sharpness is known directly for the fixed-point residual,
Theorems~\ref{thm:fixed_restart_linear} and~\ref{thm:adaptive_restart_linear}
apply under their stated metric assumptions. If instead a KKT error bound or
metric-subregularity estimate is available,
Proposition~\ref{prop:kkt_error_bounds_imply_sharpness}
transfers it to fixed-point sharpness. Proposition~\ref{prop:box_lp_qp_error_bound}
provides this verification for the box-constrained linear and convex quadratic
programs used in the numerical section. In the identified affine-face regime,
Corollary~\ref{cor:identified_face_sharpness_for_restart} supplies the same
verification from the reduced residual theory of Section~\ref{sec4}.

\section{Numerical Experiments}
\label{sec6}

We test the augmented primal--dual family on linear and convex quadratic
programs. The experiments are organized by problem class and by the
computational role of each update. We do not benchmark every family member on
every problem class: LP experiments focus on equality-form solvers of the PDHG
and CP-AL types, while QP experiments test both subproblem-based and linearized
variants.

\subsection{Experimental Protocol and Method Scope}
\label{subsec:exp_protocol}

All reported GPU experiments were run on one H100 GPU with 80\,GB HBM3 memory
and CUDA~12.8.1.
External baselines were run under the same hardware conditions whenever they
appear in a direct comparison, with presolve disabled where stated. Within
each benchmark family, all methods use the same stopping tolerance and
wall-clock or iteration budget. All comparative statements below are therefore
relative to the protocol specified for the corresponding benchmark, including
the reported presolve convention. We report solved counts, time-limit counts,
total runtime when informative, and the shifted geometric mean
\[
\mathrm{SGM10}(t_1,\ldots,t_N)
:=
\exp\!\left(\frac{1}{N}\sum_{i=1}^N \log(t_i+10)\right)-10 .
\]
When a run reaches the time limit, the time limit is used in the computation
of \(\mathrm{SGM10}\).

For the restarted variants proposed here, restart decisions are made using the
fixed-point residual of the underlying map, as in Section~\ref{sec5}. For the
LP implementation we also use the residual-dominated restart-candidate selection
described in Remark~\ref{rem:restart_candidate_selection}. At a restart check,
the candidate is chosen greedily from the current primal--dual point and the
Halpern-updated point; in the notation of Section~\ref{sec5}, this corresponds
to the case \(\eta=1\). Let \(t\) denote the current inner-iteration index in
epoch \(n\), let \(c_{n,t}\) be the selected candidate at that check, and set
\[
        R_{n,t}:=r_F(c_{n,t}),\qquad
        r_F(w):=\|w-F(w)\|_{\mathcal M},
\]
where \(F\) is the corresponding base map. By
\eqref{eq:reflected_residual_relation}, this is equivalent, up to the scalar
\(1+\gamma\), to using the residual of the reflected map. Following the
adaptive restart criteria used in HPR-LP, \texttt{cuPDLPx}, and
\texttt{PDHCG-II}~\cite{hpr,cupdlpx,pdhcgii}, a restart is performed if one of
the following conditions is met:
\begin{enumerate}
\item[(i)] \emph{Sufficient decay:}
      \(R_{n,t}\le \beta_{\rm suff}R_{n,0}\).
\item[(ii)] \emph{Necessary decay plus no local progress:}
      \(R_{n,t}\le \beta_{\rm nec}R_{n,0}\) and
      \(R_{n,t}>R_{n,t-1}\).
\item[(iii)] \emph{Long inner loop:}
      \(t\ge \beta_{\rm art}k_{\rm tot}\), where \(k_{\rm tot}\) is the
      cumulative iteration count at the check.
\end{enumerate}
We use
\((\beta_{\rm suff},\beta_{\rm nec},\beta_{\rm art})=(0.2,0.8,0.36)\)
for the LP experiments and the corresponding PDHCG-II restart parameters for
the QP experiments.

\begin{remark}[Normalized CP-AL parametrization]
\label{rem:cpal_sigma_update}
For the LP experiments, we use the normalized one-parameter CP-AL subfamily
\[
        P=\sigma(\lambda I_n-A^\top A),\qquad Q=\sigma^{-1}I_m,
\]
or equivalently \(\tau^{-1}=\sigma\lambda\) and \(\rho=\sigma\). This is not
a new algorithmic family, but a convenient parametrization in which
\(\lambda\) controls the primal metric while \(\sigma\) acts simultaneously as
augmentation parameter and dual stepsize. The CP-AL condition
\(\tau(\sigma+\rho)\|A\|^2\le1\) becomes
\(\lambda\ge 2\|A\|^2\) under this parametrization. We choose \(\lambda\)
using a power-iteration estimate of \(\|A\|^2\), with a small safety factor
above this lower bound, and initialize \(\sigma_0=0.5\,\lambda^{-1/2}\).
The parameter \(\sigma\) is updated only at restart epochs. If
\(\Delta x\) and \(\Delta y\) are the primal and dual displacements over the
completed epoch, define
\[
        d_x^2:=\lambda\|\Delta x\|^2-\|A\Delta x\|^2,
        \qquad
        \widehat\sigma:=\frac{\|\Delta y\|}{\sqrt{d_x^2}}.
\]
When this estimate is numerically reliable, we use the damped log-scale update
\[
        \log\sigma^+=(1-\theta)\log\sigma+\theta\log\widehat\sigma,
        \qquad \theta=0.5,
\]
with a fixed safeguard interval proportional to \(\lambda^{-1/2}\). If the
estimate is unreliable, \(\sigma\) is left unchanged; the safeguard interval is
used only to avoid degenerate scalings of the metric. Thus
\texttt{RHR-CP-AL} denotes the restarted Halpern-reflected realization of the
\texttt{CP-AL} base map equipped with this safeguarded adaptive \(\sigma\)
rule.
\end{remark}

For all experiments we report a relative KKT score associated with the model
\eqref{P}. Recall that its KKT conditions are
\[
        0\in \nabla f(x)+\partial g(x)-A^\top y,
        \qquad
        Ax=b .
\]
All multipliers used in the residual evaluation are expressed in this sign
convention.
For a reported primal-dual pair \((x,y)\) with finite objective value, define
\[
\begin{aligned}
r_{\mathrm p}(x)
&:=\max\left\{
\frac{\|Ax-b\|_\infty}{1+\|b\|_\infty},
\frac{\operatorname{dist}_\infty(x,\operatorname{dom}g)}
{1+\|x\|_\infty}
\right\},\\
r_{\mathrm d}(x,y)
&:=
\frac{\operatorname{dist}_\infty\!\bigl(A^\top y-\nabla f(x),\partial g(x)\bigr)}
{1+\|\nabla f(x)\|_\infty+\|A^\top y\|_\infty}.
\end{aligned}
\]
Here \(\operatorname{dist}_\infty\) denotes distance in the \(\ell_\infty\)
norm, with the convention that the distance to the empty set is \(+\infty\).
The relative objective gap is
\[
        r_{\mathrm g}(x,y)
        :=
        \frac{|\Phi(x)-d(y)|}{1+|\Phi(x)|+|d(y)|},
        \qquad
        d(y):=b^\top y+\inf_{z\in\mathbb R^n}
        \{f(z)+g(z)-\langle A^\top y,z\rangle\}.
\]
We then set
\[
        r_{\mathrm{KKT}}:=\max\{r_{\mathrm p},r_{\mathrm d},r_{\mathrm g}\}.
\]
For the equality-box LP and QP test problems below, \(g\) is the indicator of
the box \([l,u]\); these definitions reduce to the usual primal feasibility,
dual feasibility, and relative primal-dual gap residuals.
A run is declared solved when
\(r_{\mathrm p}\le \varepsilon_{\mathrm{feas}}\),
\(r_{\mathrm d}\le \varepsilon_{\mathrm{feas}}\), and
\(r_{\mathrm g}\le \varepsilon_{\mathrm{opt}}\). When a table states a single
tolerance \(\varepsilon\), we use
\(\varepsilon_{\mathrm{feas}}=\varepsilon_{\mathrm{opt}}=\varepsilon\).
When fixed-point residual traces are compared across methods, each residual is
normalized by its initial value because the underlying fixed-point metrics
differ.

We distinguish base maps from solver names. The names \texttt{PDHG},
\texttt{CP-AL}, \texttt{FA-CP}, \texttt{Lin-PDHG}, and \texttt{Lin-CP-AL}
refer to operator maps or algorithmic families. In numerical tables, the
prefix \texttt{RHR-} denotes the method obtained by applying the restarted
reflected Halpern wrapper to the corresponding base map. External or
previously published solvers keep their original names; in particular, the LP
PDHG-type comparators are \texttt{cuPDLPx}~\cite{cupdlpx} and
\texttt{cuPDLP-C}, the LP Halpern baseline is
\texttt{HPR-LP}~\cite{hpr}, and the QP external solvers include
\texttt{PDQP}~\cite{pdqp}, \texttt{HPR-QP}~\cite{hprqp}, and the
\texttt{PDHCG} variants reported in~\cite{pdhcgii}. This naming convention is
used consistently below.

\subsection{Linear Programming Experiments}
\label{subsec:lp_experiments}

We begin with equality-constrained box LPs of the form
\begin{equation}\label{eq:eq_lp_test}
\min_{x \in \mathbb{R}^{n}} \ c^{\top}x
\quad \mathrm{s.t.}\quad
Ax=b,\qquad l \le x \le u .
\end{equation}
This is the most direct LP realization of the framework: the equality
constraint is represented natively, and the augmentation acts directly on the
residual \(Ax-b\). The LP subsection has four roles. First, controlled
randomized families isolate scaling and near dependence in \(A\), so that the
effect of reflection and restarted Halpern anchoring can be tested without
instance selection from a benchmark library. Second, the main benchmark uses
the full Gurobi-presolved MIPLIB LP-relaxation collection used in
\texttt{cuPDLPx}~\cite{cupdlpx,miplib2017}. Third, the public Mittelmann LP
benchmark used in HPR-LP~\cite{hpr} provides an additional test set with the
same no-extra-presolve protocol. Fourth, a small set of representative MIPLIB
instances provides instance-level context for the aggregate table.

The synthetic instances are generated by sampling a sparse matrix
\(A\in\mathbb{R}^{m\times n}\), enforcing nonzero rows and columns, injecting
near dependence among selected rows and columns, and then applying diagonal row
and column scalings. A feasible point \(x^{\mathrm{fea}}\in[l,u]\) is planted
and \(b:=Ax^{\mathrm{fea}}\). Within each family we generate
\(1000\) instances, consisting of \(400\) small, \(400\) medium, and \(200\)
large problems with
\[
(m,n,\mathrm{density})\in
\{(150,400,0.045),(250,700,0.025),(400,1000,0.012)\}.
\]
The four synthetic LP families are defined in
Table~\ref{tab:eq_lp_families}. Here \(\alpha_r\) and \(\alpha_c\) are the
logarithmic row and column scaling exponents, \(p_r\) and \(p_c\) are the
fractions of rows and columns used in the dependence step, and
\(\gamma_{\rm dep}\) is the dependence coefficient.

\begin{table}[!htbp]
\centering
\scriptsize
\caption{Synthetic families for the equality-constrained box-LP test set.}
\label{tab:eq_lp_families}
\begin{tabular}{@{}lcccccl@{}}
\toprule
Family & \(\alpha_r\) & \(\alpha_c\) & \(p_r\) & \(p_c\) & \(\gamma_{\rm dep}\) & Interpretation \\
\midrule
Baseline        & 2 & 2 & 0.02 & 0.02 & 0.98   & Mild scaling and weak dependence \\
Ill-scaled      & 5 & 5 & 0.02 & 0.02 & 0.98   & Severe scaling and weak dependence \\
Near-dependent  & 2 & 2 & 0.28 & 0.28 & 0.9995 & Mild scaling and strong dependence \\
Hybrid          & 3 & 3 & 0.18 & 0.18 & 0.995  & Simultaneous scaling and dependence \\
\bottomrule
\end{tabular}
\end{table}

For this synthetic LP class, the main comparison is between \texttt{cuPDLPx}
and \texttt{RHR-CP-AL}. The terminal score is
\(s:=r_{\mathrm{KKT}}\). All methods are run without presolve, with
\(\varepsilon_{\mathrm{opt}}=\varepsilon_{\mathrm{feas}}=10^{-8}\), iteration
limit \(10^5\), and time limit \(60\) seconds. Table~\ref{tab:eq_lp_main}
reports the median terminal score and the number of instances satisfying two
accuracy thresholds. This comparison isolates the effect of equality geometry;
the emphasis is on terminal quality rather than wall-clock time. The
improvement for \texttt{RHR-CP-AL} is most pronounced in the near-dependent
and hybrid families, which are the synthetic cases with the most degenerate
equality geometry.

\begin{table}[!htbp]
\centering
\scriptsize
\caption{Equality-constrained box-LP results on \(1000\) instances per family.
For each family, we report the median terminal score \(s\) and the numbers of
instances satisfying \(s\le 10^{-4}\) and \(s\le 10^{-6}\).}
\label{tab:eq_lp_main}
\begin{tabular}{@{}lcccccc@{}}
\toprule
Family & \multicolumn{2}{c}{Median \(s\)}
& \multicolumn{2}{c}{\(\#\{s \le 10^{-4}\}\)}
& \multicolumn{2}{c}{\(\#\{s \le 10^{-6}\}\)} \\
\cmidrule(lr){2-3}\cmidrule(lr){4-5}\cmidrule(lr){6-7}
 & \texttt{cuPDLPx} & \texttt{RHR-CP-AL}
 & \texttt{cuPDLPx} & \texttt{RHR-CP-AL}
 & \texttt{cuPDLPx} & \texttt{RHR-CP-AL} \\
\midrule
Baseline       & \textbf{\(9.42 \times 10^{-9}\)} & \(9.69 \times 10^{-9}\) & 999 & \textbf{1000} & \textbf{997} & 996 \\
Ill-scaled     & \textbf{\(9.75 \times 10^{-9}\)} & \(6.78 \times 10^{-8}\) & 961 & \textbf{991} & 871 & \textbf{933} \\
Near-dependent & \(3.37 \times 10^{-2}\) & \textbf{\(5.68 \times 10^{-6}\)} & 163 & \textbf{926} & 1 & \textbf{64} \\
Hybrid         & \(1.83 \times 10^{-6}\) & \textbf{\(8.02 \times 10^{-7}\)} & 748 & \textbf{993} & 456 & \textbf{557} \\
\bottomrule
\end{tabular}
\end{table}

The randomized families above are intended as controlled stress tests rather
than as the main solver benchmark. We therefore use the Gurobi-presolved LP
relaxations from MIPLIB~2017~\cite{miplib2017}. Following
\texttt{cuPDLPx}~\cite{cupdlpx}, this benchmark contains \(379\)
instances, split by the number of nonzeros into Small, Medium, and Large
groups. The row-bounded constraints \(\ell_A\le Ax\le u_A\) are handled by
the equivalent slack-variable equality form, with the slack variables
eliminated in the GPU implementation. We use the adaptive version of
\texttt{RHR-CP-AL}. The benchmark files are already Gurobi-presolved; during
these runs, all solvers are called with any additional presolve disabled. We
follow the time-limit convention of \texttt{cuPDLPx}: \(3600\) seconds for the
Small and Medium groups and \(18000\) seconds for the Large group. The
\texttt{RHR-CP-AL} row in Table~\ref{tab:lp_miplib_full} is produced by a
single executable with one automatic policy inside the solver. The policy uses
the same guarded implementation choices throughout the full benchmark:
adaptive restart, reflected Halpern anchoring, restart-point selection, and
safeguarded CP-AL parameter updates. The \texttt{RHR-CP-AL} jobs differ only by
tolerance and ordinary batch chunks; no profile-specific tuning rows are used.

Table~\ref{tab:lp_miplib_full} reports solved counts and \(\mathrm{SGM10}\) time
in seconds for two stopping tolerances. The comparison
includes the CUDA/C baselines \texttt{cuPDLPx(C)}, \texttt{cuPDLP-C}, and
\texttt{HPR-LP-C}, together with the Julia implementations of \texttt{cuPDLP} and
\texttt{HPR-LP}. With additional solver-side presolve disabled as described
above, \texttt{RHR-CP-AL} attains the best \(\mathrm{SGM10}\) in each split and
in the aggregate at both tolerances, while matching the largest total solved count.
The only unsolved instance for \texttt{RHR-CP-AL} is
\texttt{neos-4535459-waipa} in the Large group, which reaches the time limit at
both tolerances.

\begin{table}[htbp]
\centering
\scriptsize
\setlength{\tabcolsep}{3pt}
\caption{Solved counts and \(\mathrm{SGM10}\) time in seconds on
Gurobi-presolved MIPLIB LP relaxations. Additional solver-side presolve is
disabled; unsolved runs are counted at the corresponding time limit. In each
block, boldface in the Count column marks all methods attaining the largest
solved count, while boldface in the Time column marks the smallest
\(\mathrm{SGM10}\) among those methods.}
\label{tab:lp_miplib_full}
\begin{tabular}{@{}llrrrrrrrr@{}}
\toprule
\(\varepsilon\) & Method
& \multicolumn{2}{c}{Small (268)}
& \multicolumn{2}{c}{Medium (93)}
& \multicolumn{2}{c}{Large (18)}
& \multicolumn{2}{c}{Total (379)} \\
\cmidrule(lr){3-4}\cmidrule(lr){5-6}\cmidrule(lr){7-8}\cmidrule(l){9-10}
& & Count & Time & Count & Time & Count & Time & Count & Time \\
\midrule
\(10^{-4}\)
& \texttt{cuPDLPx(C)}
& \textbf{268} & 0.980 & \textbf{93} & 2.699
& \textbf{17} & 15.168 & \textbf{378} & 1.836 \\
& \texttt{cuPDLP-C}
& 259 & 5.787 & 84 & 14.945 & 15 & 38.675 & 358 & 8.633 \\
& \texttt{cuPDLP.jl}
& 258 & 12.647 & 84 & 25.623 & \textbf{17} & 31.457 & 359 & 16.047 \\
& \texttt{HPR-LP.jl}
& 265 & 2.507 & \textbf{93} & 5.138 & \textbf{17} & 16.736 & 375 & 3.588 \\
& \texttt{HPR-LP-C}
& \textbf{268} & 0.781 & \textbf{93} & 2.809 & \textbf{17} & 15.443 & \textbf{378} & 1.715 \\
& \texttt{RHR-CP-AL}
& \textbf{268} & \textbf{0.636} & \textbf{93} & \textbf{2.620}
& \textbf{17} & \textbf{14.947} & \textbf{378} & \textbf{1.550} \\
\midrule
\(10^{-8}\)
& \texttt{cuPDLPx(C)}
& \textbf{268} & 3.442 & \textbf{93} & 9.196
& \textbf{17} & 41.391 & \textbf{378} & 5.635 \\
& \texttt{cuPDLP-C}
& 263 & 15.093 & 86 & 27.358 & \textbf{17} & 69.267 & 366 & 19.220 \\
& \texttt{cuPDLP.jl}
& 248 & 35.049 & 82 & 64.784 & 16 & 143.849 & 346 & 44.080 \\
& \texttt{HPR-LP.jl}
& 264 & 6.976 & 88 & 18.023 & \textbf{17} & 54.677 & 369 & 10.457 \\
& \texttt{HPR-LP-C}
& \textbf{268} & 2.451 & 91 & 8.223 & \textbf{17} & 36.181 & 376 & 4.549 \\
& \texttt{RHR-CP-AL}
& \textbf{268} & \textbf{2.146} & \textbf{93} & \textbf{7.497}
& \textbf{17} & \textbf{36.068} & \textbf{378} & \textbf{4.153} \\
\bottomrule
\end{tabular}
\end{table}

The same solver set is also evaluated on the public \(49\)-instance Mittelmann
LP benchmark used in HPR-LP~\cite{hpr}. All methods are run on the same
\texttt{.mps} files with a
\(1000\)-second time limit and no additional solver-side presolve where the
solver interface exposes this option. The \texttt{RHR-CP-AL} row is produced
by the same unified codebase and executable as Table~\ref{tab:lp_miplib_full},
with the automatic policy selected inside the solver rather than by
profile-specific instance lists. With the common protocol described above,
Table~\ref{tab:lp_mittelmann_full} shows that \texttt{RHR-CP-AL} matches the
largest solved count and gives the smallest \(\mathrm{SGM10}\) at both
tolerances.

\begin{table}[htbp]
\centering
\scriptsize
\caption{Solved counts and \(\mathrm{SGM10}\) time in seconds on the
Mittelmann LP benchmark. The time limit is \(1000\) seconds for every instance;
unsolved and failed runs are counted at the time limit. Boldface in the Count
column marks all methods attaining the largest solved count, while boldface in
the Time column marks the smallest \(\mathrm{SGM10}\) among those methods.}
\label{tab:lp_mittelmann_full}
\begin{tabular*}{\textwidth}{@{\extracolsep{\fill}}lrrrr@{}}
\toprule
\multirow{2}{*}{Method}
& \multicolumn{2}{c}{\(\varepsilon=10^{-4}\)}
& \multicolumn{2}{c}{\(\varepsilon=10^{-8}\)} \\
\cmidrule(lr){2-3}\cmidrule(l){4-5}
& Count & Time & Count & Time \\
\midrule
\texttt{cuPDLPx(C)} & 44 & 14.350 & 41 & 42.249 \\
\texttt{cuPDLP-C} & 36 & 57.160 & 35 & 98.158 \\
\texttt{cuPDLP.jl} & 40 & 49.012 & 33 & 133.659 \\
\texttt{HPR-LP.jl} & 44 & 18.682 & 41 & 60.923 \\
\texttt{HPR-LP-C} & \textbf{47} & 10.780 & \textbf{44} & 31.162 \\
\texttt{RHR-CP-AL} & \textbf{47} & \textbf{8.427} & \textbf{44} & \textbf{28.965} \\
\bottomrule
\end{tabular*}
\end{table}

Table~\ref{tab:lp_miplib_representative} gives instance-level context for the
same final automatic-policy run. It lists representative \(10^{-8}\) cases from
the subset in which \texttt{RHR-CP-AL} has a shorter runtime than both
baselines. These rows are not used for aggregate claims; the aggregate
comparisons in this subsection are based on
Tables~\ref{tab:lp_miplib_full} and~\ref{tab:lp_mittelmann_full}.

\begin{table}[!htbp]
\centering
\scriptsize
\caption{Representative \(10^{-8}\) MIPLIB instances from the final
automatic-policy run. The instances shown are cases in which
\texttt{RHR-CP-AL} has a shorter runtime than both \texttt{cuPDLPx(C)} and
\texttt{HPR-LP-C}. Entries are runtimes in seconds; \texttt{TL} denotes the
split time limit. For aggregate results, see Table~\ref{tab:lp_miplib_full}.}
\label{tab:lp_miplib_representative}
\begin{tabular}{@{}llrrr@{}}
\toprule
Instance & Split & \texttt{cuPDLPx(C)} & \texttt{HPR-LP-C} & \texttt{RHR-CP-AL} \\
\midrule
\texttt{neos-4391920-timok} & Small & 310 & 164 & \textbf{52.9} \\
\texttt{app1-2} & Small & 38.9 & 24.2 & \textbf{11.6} \\
\texttt{irish-electricity} & Small & 10.6 & 59.5 & \textbf{5.69} \\
\texttt{neos-4292145-piako} & Small & 2.89 & 20.4 & \textbf{1.60} \\
\texttt{neos-4413714-turia} & Small & 2780 & 392 & \textbf{277} \\
\texttt{supportcase19} & Medium & 3230 & \texttt{TL} & \textbf{1320} \\
\texttt{neos-3025225-shelon} & Medium & 233 & 246 & \textbf{132} \\
\texttt{ivu06} & Medium & 410 & 319 & \textbf{289} \\
\texttt{square47} & Large & 62.7 & 68.0 & \textbf{52.7} \\
\texttt{nucorsav} & Large & 9.37 & 20.7 & \textbf{8.75} \\
\bottomrule
\end{tabular}
\end{table}

\FloatBarrier

\subsection{Quadratic Programming Experiments}
\label{subsec:qp_experiments}

We consider convex QPs of the form
\begin{equation}\label{eq:eq_qp_test}
\min_{x \in \mathbb{R}^{n}} \ \frac12 x^\top Hx+c^\top x
\quad \mathrm{s.t.}\quad
Ax=b,\qquad l\le x\le u,
\end{equation}
with \(H\succeq0\). The QP experiments have three roles. The first is a
geometry-controlled augmentation test: it identifies regimes where a positive
augmentation parameter in \texttt{RHR-FA-CP} improves over an unaugmented
PDHCG-type baseline. The second is a standard benchmark comparison against
external QP solvers. The third isolates linearization cost. Its subproblem-based
rows are \texttt{PDHCG-II}, \texttt{RHR-CP-AL}, and \texttt{RHR-FA-CP}; its
explicit rows are \texttt{RHR-Lin-PDHG} and \texttt{RHR-Lin-CP-AL}. We do not
report a linearized \texttt{FA-CP} variant. Its linearization still leaves a
nontrivial primal subproblem, whereas the linearized comparison concerns
variants whose main update becomes explicit. The synthetic and linearized
experiments therefore examine the algorithmic effects of augmentation and
explicit updates, while the Maros--Meszaros benchmark compares against
established QP solvers.

The theory in Sections~\ref{sec2}--\ref{sec5} is stated for exact subproblem
maps. In the QP implementation, diagonal Hessian subproblems are solved by
componentwise projection formulas, whereas sparse or low-rank non-diagonal
Hessian subproblems are solved inexactly by the
projected-gradient/Barzilai--Borwein inner solver used in
\texttt{PDHCG-II}~\cite{pdhcgii}. The same inner-solver mechanism is used for
the subproblem-based \texttt{RHR-CP-AL} and \texttt{RHR-FA-CP} rows; in
\texttt{RHR-FA-CP} the inner problem includes the augmented quadratic term.
Accordingly, these QP rows should be interpreted as practical inexact
realizations of the exact subproblem maps analyzed above. The exact-map
convergence guarantees in Sections~\ref{sec2}--\ref{sec5} do not directly cover
these inexact inner-solver implementations. The linearized variants instead use
explicit projected updates and avoid these inner solves.

\subsubsection*{Geometry-driven synthetic QPs}

The main QP experiment is designed around a single geometric question: when
\(H\) provides little curvature in \(\mathrm{range}(A^\top)\) and the equality
matrix \(A\) is ill-conditioned, can the augmented term in \texttt{RHR-FA-CP}
improve an unaugmented PDHCG-type baseline? This is the regime in which a
positive augmentation parameter is expected to help, since the additional
\(A^\top A\)-curvature acts in the weak row-space directions.

All synthetic QPs in this subsection are generated from the same planted-KKT
model. We first sample \(A\), \(H\succeq0\), a primal point \(x^\star\in[l,u]\),
equality multiplier \(y^\star\), and bound multiplier \(s^\star\) satisfying
the sign conditions at the active bounds. We then set
\[
        b=Ax^\star,\qquad
        c=A^\top y^\star-Hx^\star-s^\star .
\]
Thus the generated instance has a controlled KKT point, and different random
families correspond to different parameter regimes of the same construction. To
isolate the augmentation mechanism, we report targeted families in regimes where
the preceding geometry suggests that positive augmentation can be useful:
weak row-space curvature, ill-conditioned or nearly dependent equality
constraints, and nontrivial box activity. We also include one balanced-curvature
ill-scaled family as a control case. For the row-space-weak profile, we
decompose \(\mathbb{R}^n\) into \(\mathrm{range}(A^\top)\) and
\(\mathrm{null}(A)\), draw the eigenvalues of \(H\) from \([10^{-8},10^{-6}]\)
on \(\mathrm{range}(A^\top)\), and draw them from \([10^{-1},1]\) on
\(\mathrm{null}(A)\). The balanced profile uses \([10^{-3},10^{-1}]\) on both
subspaces. The equality matrices are either ill-scaled with weak dependence or
mildly near-dependent with moderate row/column perturbations. The
interior-dominant and moderately active regimes plant about \(1\%\) and \(8\%\)
of the variables at each bound, respectively. We generate \(100\) instances per
reported family, with a \(40/40/20\) split over
\((m,n,\mathrm{density})=(60,160,0.060),(100,240,0.040)\), and
\((160,320,0.025)\).

This experiment keeps the same restarted reflected Halpern wrapper and compares
the unaugmented PDHCG-type baseline with positive-\(\sigma\)
\texttt{RHR-FA-CP} variants.
This isolates the effect of augmentation while keeping the outer acceleration
and residual criterion fixed. The candidate set is
\[
\sigma\in\{10^{-4},10^{-3},10^{-2},10^{-1},3\times10^{-1},1\}.
\]
The tolerance is \(10^{-6}\), and the time limit is \(30\) seconds per
instance.
For each reported family, the selection rule is fixed in advance: maximize the
solved count first and break ties by total runtime. Unsolved runs, if any, are
charged at the time limit when computing total time. Table~\ref{tab:eq_qp_v2b_sigma}
reports the resulting mechanism comparison. The table lists the total
wall-clock time over the \(100\) instances for \(\sigma=0\) and for the
selected positive value of \(\sigma\); the selected \(\sigma\) is shown in
parentheses, and the speedup is the ratio of the two times.

\begin{table}[!htbp]
\centering
\caption{Targeted equality-box QP mechanism experiment comparing
\(\sigma=0\) and a selected positive \(\sigma\) in \texttt{RHR-FA-CP}. Each row
aggregates total runtime over the \(100\) instances in the family.}
\label{tab:eq_qp_v2b_sigma}
\begin{tabular*}{\textwidth}{@{\extracolsep{\fill}}lllccc@{}}
\toprule
\makecell[l]{Curvature} & \makecell[l]{Constraint\\family} & \makecell[l]{Activity\\regime}
& \makecell[c]{\(\sigma=0\)\\time}
& \makecell[c]{\(\sigma>0\) time\\(selected \(\sigma\))}
& Speedup \\
\midrule
\makecell[l]{Balanced} & \makecell[l]{Ill-scaled} & \makecell[l]{Moderately\\active}
& \(337\)s & \(\mathbf{308}\)s \((\sigma=10^{-2})\) & \(\mathbf{1.09\times}\) \\
\makecell[l]{Row-space\\weak} & \makecell[l]{Mildly near-\\dependent} & \makecell[l]{Moderately\\active}
& \(438\)s & \(\mathbf{337}\)s \((\sigma=1)\) & \(\mathbf{1.30\times}\) \\
\makecell[l]{Row-space\\weak} & \makecell[l]{Ill-scaled} & \makecell[l]{Interior-\\dominant}
& \(512\)s & \(\mathbf{489}\)s \((\sigma=3\times 10^{-1})\) & \(\mathbf{1.05\times}\) \\
\bottomrule
\end{tabular*}
\end{table}

\FloatBarrier

Figure~\ref{fig:eq_qp_facp_sigma_advantage} provides two mechanism views. To
avoid mixing two different notions of improvement, the left panel focuses on
the same targeted families as Table~\ref{tab:eq_qp_v2b_sigma}. The right panel
shows a representative row-space-weak, ill-scaled, interior-dominant instance
drawn from the same planted-KKT generator.

In this representative instance, the three runs decrease similarly in the
early phase, but the positive-augmentation runs reach substantially lower
terminal residual levels. At \(12000\) iterations, the displayed residual is
about \(8.5\times10^{-10}\) for \(\sigma=0\), \(4.9\times10^{-11}\) for
\(\sigma=10^{-2}\), and \(9.8\times10^{-12}\) for \(\sigma=0.3\). This
representative trace is not used for aggregate claims; it shows a case where
weak row-space curvature and ill-conditioned equality geometry are favorable to
positive augmentation.

\begin{figure}[H]
\centering
\includegraphics[width=0.70\linewidth]{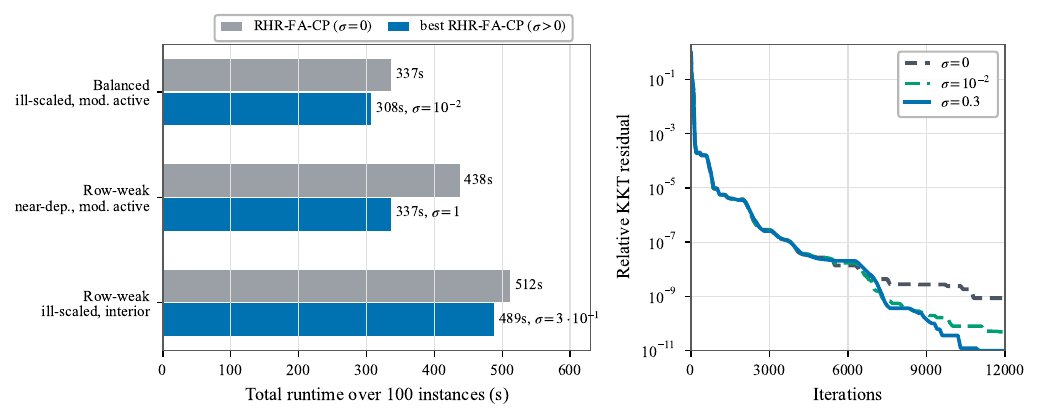}
\captionsetup{font=small}
\caption{Mechanism trace for positive augmentation in the random equality-box
QP experiment. The left panel compares the unaugmented PDHCG-type baseline
with the selected positive-\(\sigma\) \texttt{RHR-FA-CP} variant on the targeted
families at equal solved count; labels at the bar ends report total runtime
and the selected positive \(\sigma\). The right panel shows a
row-space-weak, ill-scaled, interior-dominant instance for zero augmentation,
\(\sigma=10^{-2}\), and \(\sigma=0.3\).}
\label{fig:eq_qp_facp_sigma_advantage}
\end{figure}

\FloatBarrier

\subsubsection*{Maros--Meszaros full benchmark}

We report a standard benchmark comparison on the \(134\)-instance
Maros--Meszaros QP benchmark~\cite{maros1999repository}, matching the full
benchmark protocol used in recent GPU QP solver
comparisons~\cite{pdqp,hprqp,pdhcgii}. For \texttt{RHR-FA-CP}, instances that are not
already in equality-box form are handled through the same lifted-and-eliminated
formulation used by the implementation. We compare \texttt{RHR-FA-CP} with
\texttt{PDQP}~\cite{pdqp}, \texttt{PDHCG}, \texttt{HPR-QP}~\cite{hprqp}, and
the C implementation \texttt{PDHCG-II-C} from~\cite{pdhcgii}. Presolve is
disabled where supported by the wrappers, the tolerance is \(10^{-6}\), and
the time limit is \(1000\) seconds per instance. Failed or unsolved
runs are charged with the time limit when computing \(\mathrm{SGM10}\), the
shifted geometric mean with shift \(10\). Table~\ref{tab:eq_qp_maros134}
summarizes the comparison.

\begin{table}[!htbp]
\centering
\small
\caption{Maros--Meszaros full QP benchmark with presolve disabled where
supported by the solver wrappers. The largest solved count and smallest
\(\mathrm{SGM10}\) are shown in boldface. Runs not solved within the benchmark
protocol are charged at the \(1000\)-second time limit.}
\label{tab:eq_qp_maros134}
\begin{tabular*}{\textwidth}{@{\extracolsep{\fill}}lcc@{}}
\toprule
Method & Solved & \(\mathrm{SGM10}\) \\
\midrule
\texttt{PDQP} & \(118/134\) & \(23.926\) \\
\texttt{PDHCG} & \(111/134\) & \(112.433\) \\
\texttt{HPR-QP} & \(124/134\) & \(16.111\) \\
\texttt{PDHCG-II-C} & \(124/134\) & \(14.242\) \\
\texttt{RHR-FA-CP} & \(\mathbf{125/134}\) & \(\mathbf{12.468}\) \\
\bottomrule
\end{tabular*}
\end{table}

\FloatBarrier

With presolve disabled where supported by the wrappers and the remaining
settings specified above, the same \texttt{RHR-FA-CP} implementation obtains
the best solved count and \(\mathrm{SGM10}\) among the tested QP baselines. The
preceding synthetic experiment gives the complementary geometry-controlled view
of when positive augmentation is useful.

\FloatBarrier

\subsubsection*{Linearized variants}

The third QP regime uses the same planted-KKT generator to test the numerical
meaning of the linearized theory. Among the linearized methods we include
only \texttt{RHR-Lin-PDHG} and \texttt{RHR-Lin-CP-AL}, because these are the
variants for which linearization turns the main step into an explicit update.
The subproblem-based \texttt{PDHCG-II}, \texttt{RHR-CP-AL}, and
\texttt{RHR-FA-CP} rows are included as subproblem-based baselines. A
linearized \texttt{FA-CP} variant is not reported: even after linearization it
still requires solving a nontrivial primal subproblem and therefore does not
isolate the explicit-update mechanism. The aggregate comparison is reported
in Table~\ref{tab:eq_qp_linearized}.

\begin{table}[!htbp]
\centering
\small
\caption{Unified planted-KKT large-scale QP regime for testing explicit
linearized variants (\(1000\) instances). All methods solve all instances;
\(\mathrm{SGM10}\) is the shifted geometric mean of the wrapper-reported
elapsed time in seconds, and the last column reports the median elapsed cost
per \(10^4\) iterations.}
\label{tab:eq_qp_linearized}
\begin{tabular}{@{}lcccc@{}}
\toprule
Method & Solved & Total elapsed & \(\mathrm{SGM10}\) & Median sec. per \(10^4\) iters. \\
\midrule
\texttt{PDHCG-II} & \(1000/1000\) & \(3485\)s & \(3.37\) & \(43.80\) \\
\texttt{RHR-CP-AL} & \(1000/1000\) & \(3546\)s & \(3.43\) & \(37.04\) \\
\texttt{RHR-FA-CP} & \(1000/1000\) & \(2885\)s & \(2.79\) & \(32.79\) \\
\texttt{RHR-Lin-PDHG} & \(1000/1000\) & \(3321\)s & \(3.22\) & \(14.29\) \\
\texttt{RHR-Lin-CP-AL} & \(1000/1000\) & \(\mathbf{2525}\)s & \(\mathbf{2.49}\) & \(\mathbf{12.50}\) \\
\bottomrule
\end{tabular}
\end{table}

\FloatBarrier

The linearized test regime keeps \(A\) well scaled and weakly dependent,
uses sparse moderate-curvature Hessians, and increases the problem sizes to
make the cost of inner subproblem solves visible. We generate \(1000\)
instances in four families: banded or sparse Hessian structure, crossed with
interior-dominant or moderately active bounds. The size split is
\(40/40/20\) over
\[
(m,n,\mathrm{density})\in
\{(300,1200,0.010),(520,2600,0.0045),(650,3600,0.0032)\}.
\]
All methods use tolerance \(10^{-5}\) and a \(60\) second per-instance time
limit. The augmented parameter for \(\texttt{RHR-FA-CP}\) is fixed at
\(\sigma=10^{-3}\). For \(\texttt{RHR-Lin-CP-AL}\), we use the fixed profile
\(\sigma_0=10^{-1}\), reflected coefficient \(0.7\), Pock--Chambolle exponent
\(1.5\), and termination-evaluation frequency \(400\). This profile is selected
once from a small representative screen and then applied to all \(1000\)
instances; it is not tuned instance by instance.

This experiment is not intended to show that explicit linearization is
uniformly preferable across QP geometries. Instead, it identifies the
computational regime targeted by the linearized theory: when the Hessian is
cheap to apply and the primal subproblem solve is no longer negligible, the
explicit update can have much lower per-iteration cost. In this regime, the fixed-profile
\texttt{RHR-Lin-CP-AL} row gives the smallest total elapsed time and
\(\mathrm{SGM10}\), and both RHR-linearized methods reduce the median elapsed
cost per \(10^4\) iterations relative to the subproblem-based methods.

\FloatBarrier

\section{Conclusion}
\label{sec7}

We have developed a unified augmented primal--dual framework for linearly
constrained composite convex problems. The exact augmented scheme admits a
degenerate preconditioned proximal-point representation, and its linearized
counterpart admits a preconditioned forward--backward representation. These two
operator representations provide a common basis for reflected Halpern
acceleration and yield convergence to KKT points together with nonergodic
\(O(1/k)\) bounds for the KKT residual and the objective gap of the shadow
iterates. The scalar example shows that this global residual rate is
worst-case tight.

The local theory identifies the shadow sequence as the correct object for
finite identification in reflected Halpern trajectories. Under an additional
affine-face hypothesis, the identified shadow dynamics admit an exact reduced
residual identity and a reduced Jacobian criterion for local sharpness. The
restart analysis then proves linear convergence of restart anchors under
fixed-point sharpness on the visited restart set. Consequently, the convergence
statement is global when fixed-point sharpness is global, and local or tail
when sharpness is obtained from local error bounds in the positive definite
metric setting.

Several questions remain open. It would be useful to weaken the affine-face
hypothesis while retaining an exact or approximate reduced residual
description, to develop broader polyhedral error-bound or metric-subregularity
arguments for $\ell_1$ and other structured nonsmooth terms, and to extend the
restart verification beyond the positive definite metric setting. Another
direction is to refine implementable inexact variants of the exact augmented
maps without losing the residual interpretation used in the theory.

\clearpage
\appendix

\section{Assumption Map for the Main Results}
\label{app:assumption-map}

Table~\ref{tab:assumption-map} summarizes where the main assumptions enter the
analysis. It is intended only as a navigation aid; the formal assumptions are
those stated in the corresponding sections and theorems.

\begingroup
\begin{table}[!htbp]
\centering
\caption{Assumption map for the main results.}
\label{tab:assumption-map}
\footnotesize
\setlength{\tabcolsep}{3pt}
\renewcommand{\arraystretch}{1.15}
\begin{tabular}{@{}>{\raggedright\arraybackslash}p{0.12\textwidth}
                  >{\raggedright\arraybackslash}p{0.38\textwidth}
                  >{\raggedright\arraybackslash}p{0.24\textwidth}
                  >{\raggedright\arraybackslash}p{0.20\textwidth}@{}}
\toprule
Result & Map and assumptions & Main conclusion & Scope \\
\midrule
Prop.~\ref{prop:dppm_representation}
& exact augmented primal--dual scheme with self-adjoint \(P,Q\); no metric
regularity is needed for the algebraic identity
& exact dPPM representation
& global algebraic identity \\
Thm.~\ref{thm:kkt_gap_rate}
& reflected Halpern dPPM under Assumption~\ref{ass:regularity}, with
\(\mathcal M\) possibly semidefinite and \(W^\star\neq\emptyset\);
\(\gamma\in(-1,1)\) for common state/shadow convergence, while the abstract
residual estimate also permits \(\gamma=1\)
& convergence and \(O(1/k)\) KKT residual and objective gap bounds for shadow
iterates
& global under KKT nonemptiness \\
\makecell[l]{Thm.~\ref{thm:sec32_relaxation};\\
Prop.~\ref{prop:halpern_pfbs_basic};\\
Thm.~\ref{thm:halpern_pfbs_rate}}
& linearized PFBS reflected Halpern under \(\mathcal M\succ0\),
\(\mu_x>L_h/2\), and \(W^\star\neq\emptyset\); the algorithm uses
\(\gamma\in(-1,1-L_h/(2\mu_x))\), with the endpoint reserved for map-level
nonexpansiveness
& nonexpansiveness, convergence, and \(O(1/k)\) KKT residual bounds
& global under KKT nonemptiness \\
Thm.~\ref{thm:finite-identification-shadow-sequence}
& reflected Halpern shadow sequence after selecting either the exact dPPM or
linearized PFBS map; Assumption~\ref{ass:shadow-identification} gives the local
identification conditions
& finite identification of the shadow sequence
& local around the limiting KKT point \\
\makecell[l]{Thm.~\ref{thm:reduced-residual-identity-affine-face};\\
Prop.~\ref{prop:reduced-jacobian-sufficient-condition}}
& post-identification reduced dynamics under the affine-face model in
Assumption~\ref{ass:affine-face-model}
& exact reduced residual identity and reduced sharpness criterion
& local and structural \\
\makecell[l]{Thms.~\ref{thm:fixed_restart_linear}\\
and~\ref{thm:adaptive_restart_linear}}
& fixed-frequency and adaptive restart for a nonexpansive reflected map;
positive definiteness is used where metric transfer to the original KKT
residual is invoked
& epoch contraction of restart anchors under fixed-point sharpness on the
visited set
& global under global fixed-point sharpness; otherwise local or tail \\
Prop.~\ref{prop:box_lp_qp_error_bound}
& box LP/QP verification with \(\mathcal M\succ0\) for transfer from KKT error
bounds to fixed-point sharpness
& local metric subregularity and Hoffman--Robinson error-bound verification
& local or tail after entry into the error-bound neighborhood \\
\bottomrule
\end{tabular}
\end{table}
\endgroup
\FloatBarrier

\section*{Statements and Declarations}

\textbf{Funding.} Zaiwen Wen was supported in part by the National Key
Research and Development Program of China (grant no.~2024YFA1012900) and the
National Natural Science Foundation of China (grant nos.~12331010 and
12288101).

\textbf{Competing interests.} The authors have no competing interests to
declare that are relevant to the content of this article.

\textbf{Data availability}.\ The data generated and analyzed in
the numerical experiments are available from the corresponding author upon
reasonable request.

\bibliographystyle{spmpsci}
\bibliography{z_manuscript} 
\end{document}